\newtheorem{theorem}{Theorem}[section]
\newtheorem{lemma}[theorem]{Lemma}
\theoremstyle{definition}
\newtheorem{definition}[theorem]{Definition}
\theoremstyle{remark}
\newtheorem{example}[theorem]{Example}
\date{}
\newcommand{\leqn}{\preccurlyeq}
\newcommand{\geqn}{\succcurlyeq}
\newcommand{\N}{\mathbb{N}}
\newcommand{\restrict}[1]{\!\upharpoonright_{#1}}
\newcommand{\variety}[1]{\mathfrak{#1}}
\newcommand{\varietyv}[1]{\boldsymbol{\mathfrak{#1}}}
\newcommand{\Type}[1]{\mathrm{Type}_{\, #1}}
\newcommand{\DwSet}[1]{{\downarrow}#1}
\newcommand{\UpSet}[1]{{\uparrow}#1}
\newcommand{\Cn}{\mathcal{C}_n}
\newcommand{\Con}{\mathcal{C}}
\newcommand{\PDL}{\ensuremath{ \varietyv{B}_{\omega}}}
\newcommand{\PDLZero}{\ensuremath{ \varietyv{B}_{0}}}
\newcommand{\PDLOne}{\ensuremath{ \varietyv{B}_{1}}}
\newcommand{\PDLn}{\ensuremath{{\varietyv{B}_{n}}}}
\newcommand{\PDLnf}{\ensuremath{{\varietyv{B}_{n}^f}}}
\newcommand{\POS}{\ensuremath{\boldsymbol{\mathscr{P}}^f}}
\newcommand{\POn}{\ensuremath{\boldsymbol{\mathscr{P}}_{n}^f}}
\newcommand{\pow}[1]{\mathcal{P}(#1)}
\newcommand{\alg}[1]{\mathbf{#1}}
\newcommand{\Bn}{\alg{B}_{n}}
\newcommand{\OBn}{\overline{\alg{B}}_{n}}
\newcommand{\pos}[1]{\mathtt{#1}}
\newcommand{\Pm}{\pos{P}(m)}
\newcommand{\POne}{\pos{P}(1)}
\newcommand{\Un}[1]{{\pos{U}}_{#1}}
\newcommand{\X}{{\pos{X}}}
\newcommand{\Y}{{\pos{Y}}}
\newcommand{\Z}{{\pos{Z}}}
\newcommand{\D}{\mathsf{J}}
\newcommand{\E}{\mathsf{D}}
\renewcommand{\u}{\upsilon}
\newcommand{\Cstar}{\ensuremath{(*)}}
\newcommand{\Cnstar}{\ensuremath{(*_n)}}
\title{Unification on subvarieties of pseudocomplemented distributive lattices}
\author{Leonardo Cabrer}
\address{University of Oxford\\Mathematical Institute\\
24-29 St. Giles\\
Oxford (OX1 3LB)}
\begin{document}

\begin{abstract}
In this paper subvarieties of pseudocomplemented distributive lattices are classified by their unification type. We determine the unification type of every particular unification problem in each subvariety of pseudocomplemented distributive lattices.
\end{abstract}
\maketitle

\section{Introduction}

Syntactic unification theory is concerned with the problem of finding a substitution that equalises a finite set of pairs of terms simultaneously. 
More precisely, given a set of function symbols $\mathcal{L}$ and a finite set of pairs of $\mathcal{L}$-terms ${U=\{(t_1,s_1),\ldots,(t_m,s_m)\}}$, called a {\em unification problem}, a {\it unifier} for $U$ is a substitution $\sigma$ defined on the set of variables of the terms in $U$ such that $\sigma(t_i)=\sigma(s_i)$ for each $i\in\{1,\ldots,m\}$.
In many applications the operations in $\mathcal{L}$ are assumed to satisfy certain conditions that can be expressed  by equations, such as associativity, commutativity, idempotency. 
Then  syntactic unification evolves into {\it equational unification}. 
Given an equational theory $E$ in the language $\mathcal L$, a unifier for $U$ is now  asked to send the terms in each pair $(t_i,s_i)\in U$ to terms  $\sigma(t_i)$ and $\sigma(s_i)$ that are equivalent for $E$ (in symbols, $\sigma(t_i)\approx_{E}\sigma(s_i)$).

Once a particular unification problem is known to admit $E$-unifiers, the next task is to find a complete description  of its unifiers. 
For that we first observe that if $\sigma$ is an $E$-unifier for $U$, then $\gamma\circ\sigma$ is also an  $E$-unifier for $U$, whenever $\gamma$ is a substitution such that $\gamma\circ\sigma$ is well defined.
In this case we say that $\sigma$ is {\it more general} than $\gamma\circ\sigma$.
Therefore, a useful way to determine all the unifiers of a particular problem is to calculate a family of unifiers such that 
any other unifier of the problem is less general than one of the unifiers of the family. 
This set is called a {\it complete set of unifiers}. 
It is desirable to obtain a complete set that is not `redundant' (in the sense that  the elements of the set are incomparable). Any such set is called a {\it minimal complete set of unifiers}. 
The {\it unification type} of a unification problem is defined depending on the existence and the cardinality of a minimal complete set of unifiers (see Section~\ref{Sec:Preliminaries}). 
(We refer the reader to the surveys \cite{BSi1994,BSn2001,JK1991} for detailed definitions, historical references and applications of unification theory.)

Unification problems related to extensions of Intuitionistic Propositional Logic (Intermediate Logics) and their fragments, that is, the equational theory of subvarieties of Heyting algebras and their reducts, have been studied by several authors. The equational theory of Heyting algebras has been proved to be finitary (that is, each unification problem admits a finite minimal complete set of unifiers) by Ghilardi in \cite{Gh1999}. The unification type of various subvarieties of Heyting algebras has been determined in \cite{Dz2006,Gh1999,Gh2004,Wr1995}. Unification in different fragments of intuitionistic logic that include the implication were investigated in \cite{CM2010,IR201X,MW1988,Pr1973}. The variety of bounded distributive lattices was proved to have nullary  type (that is, there exists a unification problem that does not admit a minimal complete set of unifiers) in~\cite{Gh1997}, and the type of each unification problem was calculated in \cite{BC201X}.

We devote this paper to the study of unification in the implication-free fragment of intuitionistic logic, that is, the equational theory of pseudocomplemented distributive lattices (\emph{$p$-lattices} for short) and its extensions. 
It was first observed by Ghilardi in \cite{Gh1997} that the equational theory of $p$-lattices has nullary type. 
In this paper we take that result two steps forward. First we prove that Boolean algebras form the only non-trivial subvariety of $p$-lattices that has type one, while the others have nullary type. 
Secondly, we determine the type of each unification problem in every extension of  the equational theory of $p$-lattices.

The main tools used in this paper are: 
the algebraic approach to $E$-unification developed in \cite{Gh1997}; 
the categorical duality for bounded distributive lattices presented in \cite{Pr1970}, and its restriction to $p$-lattices developed in \cite{Pr1975};
 the characterisation of subvarieties of $p$-lattices given in \cite{Le1970}; and the description of finite projective $p$-lattices in these subvarieties given in \cite{Ur1981}.

The paper is structured as follows. We first collect in Section~\ref{Sec:Preliminaries} some preliminary material on subvarieties of $p$-lattices, finite duality for $p$-lattices and algebraic unification theory.
 Section~\ref{Sec:SpecialProd} is devoted to the study of the properties of duals of projective $p$-lattices needed throughout  the rest of this paper. 
Then, in Section~\ref{Sec:UnifType} we determine the unification type of each subvariety of $p$-lattices. 
Finally in Sections~\ref{Sec:UnifPDL1}, \ref{Sec:UnifPDL} and~\ref{Sec:UnifPDLn} we present the algorithms to calculate the unification type of each problem for each subvariety of $p$-lattices.
The statements and proofs of results in  Sections~\ref{Sec:UnifPDL} and~\ref{Sec:UnifPDLn} require specific definitions and preliminaries. We delay the introduction of these definitions to Section~\ref{Sec:Unifcore}, since they are not needed in the previous sections of the paper.

\section{Preliminaries}\label{Sec:Preliminaries}

\subsection*{Unification type}\ 

Let $\mathcal{S}=(S,\leqn)$ be a preordered class, that is, $S$ is a class and $\leqn$ is a reflexive and transitive binary relation on $S$. 
Then $\mathcal{S}$ has a natural category  structure whose objects are the elements of $S$ and whose morphisms are elements of $\leqn$. If $(x,y)\in\leqn$, then $x$ and $y$ are the domain and codomain of $(x,y)$, respectively. 
The relation $\approx\,=\,\leqn\cap\geqn$ is an equivalence relation on $S$. In this paper we deal only with $\mathcal{S}$ such that 
$(S/_{\approx},{\leqn}/_{\approx})$ is isomorphic to a partially ordered set.
A \emph{complete set} for $\mathcal{S}$
is a subset $M$ of  $S$ such that for every $x \in S$ there exists $y \in M$ with $x \leqn y$.  The set $M$ is said to be \emph{minimal complete} for $\mathcal S$ if it is complete and $x \leqn y$ implies $x=y$ for all $x,y \in M$. If $\mathcal{S}$ has a minimal complete set $M$, 
then every minimal complete set of $\mathcal{S}$ has the same cardinality as $M$.
The \emph{type} of the preorder $\mathcal{S}$ is defined as follows:
\[
\Type{}(\mathcal{S})=
\begin{cases}
0 &  \mbox{if }\mathcal{S}\mbox{ has no minimal complete set};\\
\infty & \mbox{if }\mathcal{S}\mbox{ has a minimal complete set of infinite cardinality;}\\
n &\mbox{if }\mathcal{S}\mbox{ has a finite minimal complete set of cardinality }n\mbox{.}
\end{cases}
\]
If two preordered classes are equivalent as categories, then they have the same type. 

We collect here some sufficient conditions on a preordered class to have type $0$.
\begin{theorem}{\rm \cite{Baader}}\label{Theo:Baader}
Let $\mathcal{S}=(S,\leqn)$ be a preordered class.
Then each of the following conditions implies that $\Type{}(\mathcal{S})=0$.
\begin{itemize}
\item[(i)] There is an increasing sequence  $s_1 \leqn s_2 \leqn s_3 \ldots$ in $\mathcal{S}$ without upper bounds in $\mathcal{S}$ having the property: for all $s\in S$ and $n\in\N$, if $s_n\leqn s$, there exists $t \in S$ such that $s \leqn t$ and $s_{n+1}\leqn t$.
\item[(ii)] $\mathcal{S}$ is \emph{directed }{\rm (}for each $x,y\in S$ there exists  $z\in S$ such that  $x,y\leqn z${\rm )} and there is an increasing sequence  $s_1 \leqn s_2 \leqn s_3 \ldots$ in $\mathcal{S}$ without upper bounds in $\mathcal{S}$.
\end{itemize}
\end{theorem}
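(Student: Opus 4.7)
The plan is to prove part (i) directly by contradiction and deduce part (ii) as an immediate corollary.

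For (i), I would suppose that a minimal complete set $M$ for $\mathcal{S}$ exists. By completeness there is some $m\in M$ with $s_1\leqn m$. The heart of the argument is the claim that this single element $m$ is already an upper bound for the \emph{entire} sequence $(s_n)_{n\in\N}$, which contradicts the hypothesis that the sequence has no upper bound in $\mathcal{S}$. I would establish the claim by induction on $n$. The base case is the choice of $m$. For the inductive step, assume $s_n\leqn m$. Applying the property of (i) with the parameter $s$ instantiated to $m$ produces an element $t\in S$ satisfying $m\leqn t$ and $s_{n+1}\leqn t$. Completeness of $M$ then yields some $m'\in M$ with $t\leqn m'$, so transitivity gives $m\leqn m'$, and minimality of $M$ forces $m=m'$. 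Therefore $s_{n+1}\leqn t\leqn m'=m$, closing the induction.

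For (ii), I would merely check that directedness supplies the premise of (i): given any $s\in S$ with $s_n\leqn s$, directedness applied to $s$ and $s_{n+1}$ furnishes a common upper bound $t\in S$, which is precisely the witness required in (i). Hence (ii) reduces immediately to (i).

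The one delicate point worth flagging is the use of minimality at the inductive step: what is crucially needed is that the \emph{same} element $m\in M$ reappears each time, not merely some $\approx$-equivalent one, for otherwise the induction would produce a chain of successive witnesses and not a uniform upper bound. This is exactly what the paper's definition provides, since minimality is phrased as literal equality $x=y$ inside $M$ whenever $x\leqn y$, consistent with the standing assumption that $(S/_\eq,\leqn/_\eq)$ is a partial order. Once this is noted, no further subtleties arise.
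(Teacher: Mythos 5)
Your proof is correct, and since the paper simply cites \cite{Baader} for this theorem without reproducing a proof, there is nothing to compare against beyond the standard argument, which is exactly the one you give: the induction showing that the single element $m\in M$ above $s_1$ must dominate the whole sequence, using completeness to climb back into $M$ and minimality (literal equality within $M$) to conclude $m'=m$. Your reduction of (ii) to (i) via directedness is also the standard and correct route.
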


The algebraic unification theory developed in \cite{Gh1997}
translates the traditional $E$-unification problem into algebraic terms as we describe in what follows.
Let $\variety{V}$ be a variety of algebras. 
An algebra $\alg{A}$ in $\variety{V}$ is said to be \emph{finitely presented} if there exist an $n$-generated free algebra $\alg{Free}_{\variety{V}}(n)$ and a finitely generated congruence $\theta$ of $\alg{Free}_{\variety{V}}(n)$ such that $\alg{A}$ is isomorphic to $\alg{Free}_{\variety{V}}(n)/\theta$.
Recall that a finitely generated algebra $\alg{P}$ is \emph{(regular) projective} in $\variety{V}$ if and only $\alg{P}$ is a retract of a finitely generated free algebra in $\variety{V}$. 
A \emph{unification problem for} $\variety{V}$ is a finitely presented algebra $\alg{A} \in \variety{V}$. An \emph{{\rm (}algebraic{\rm )} unifier in $\variety{V}$} for a finitely presented algebra $\alg{A}\in\variety{V}$ is a homomorphism $u \colon \alg{A} \to \alg{P}$, where $\alg{P}$ is a finitely generated projective algebra in $\variety{V}$. A unification problem $\alg{A}$ is called \emph{solvable in $\variety{V}$} if $\alg{A}$ has a unifier in $\variety{V}$.

Let $\alg{A} \in \variety{V}$ be finitely presented
and for $i=1,2$ let $u_i \colon \alg{A} \to \alg{P}_i$ be a unifier for $\alg{A}$.
Then $u_1$ is \emph{more general} than $u_2$; in symbols, $u_2 \leqn_{\variety{V}} u_1$,
if there exists a homomorphism $f \colon \alg{P}_1 \to \alg{P}_2$
such that $f \circ u_1=u_2$. For $\alg{A}$ solvable in $\variety{V}$, let $\Un{\variety{V}}(\alg{A})$ be the preordered class of unifiers for $\alg{A}$ whose preorder is the relation $\leqn_{\variety{V}}$.  
We shall omit the subscript and write $\leqn$ instead of $\leqn_{\variety{V}}$ when the variety is clear from the context.
We define the \emph{type of $\alg{A}$ in $\variety{V}$} as the type of the preordered class $\Un{\variety{V}}(\alg{A})$, in symbols
$\Type{\variety{V}}(\alg{A})=\Type{}(\Un{\variety{V}}(\alg{A}))$.  

Let $T(\variety{V})=\{ \Type{\variety{V}}(\alg{A}) \mid \alg{A} \text{ solvable in }\variety{V} \}$ be the set of types of solvable problems in $\variety{V}$. The type of the variety $\variety{V}$  is defined depending on  $T(\variety{V})$ as follows:
\[
\Type{}(\variety{V})=\begin{cases}
0 &\mbox{if }0\in T(\variety{V})\mbox{;}\\
\infty& \mbox{if }\infty\in T(\variety{V})\mbox{ and }0\notin T(\variety{V})\mbox{;}\\
\omega& \mbox{if }0,\infty\notin T(\variety{V})\mbox{ and }\\
&\ \forall n\in\N,\exists m\mbox{ such that }m\in T(\variety{V})\mbox{ and }n\leq m\mbox{;}\\
n &\mbox{if }n\in  T(\variety{V}) \mbox{ and }T(\variety{V})\subseteq\{1,\ldots,n\}\mbox{.}
\end{cases} 
\]
 Equivalently, $\Type{}(\variety{V})$ is the supremum of $T(\variety{V})$ in the total order \[1<2<\cdots<\omega<\infty<0.\]
\subsection*{$p$-lattices}\ 

An algebra $\alg{A}=(A,\wedge,\vee,\neg,0,1)$ is said to be a 
\emph{pseudocomplemented distributive lattice} ($p$-lattice) if $(A,\wedge,\vee,0,1)$ is a bounded distributive lattice
 and $\neg a$ is the maximum element of the set $\{b\in A\mid b\wedge a=0\}$
for each $a\in A$. 
Each finite distributive $(L,\wedge,\vee,0,1)$ admits a  unique $\neg$ operation such that $(L,\wedge,\vee,\neg,0,1)$ is a $p$-lattice.

The class of $p$-lattices form a variety, that is, it is closed under products, subalgebras, and homomorphic images (equivalently, it is determined by a set of equations).
In what follows, $\PDL$ denotes both the variety of $p$-lattices and the category of $p$-lattices as objects and homomorphisms as arrows. The variety $\PDL$ is locally finite, that is, every finitely generated algebra is finite. 
We let 
$\PDL^f$ denote the subcategory of finite $p$-lattices.

For each $n\in\{0,1,2,\ldots\}$, let $\Bn=(B_n,\wedge,\vee,^{*},0,1)$ denote the finite Boolean algebra with $n$ atoms and let $\OBn$ be the algebra obtained by adding a new top~$1'$ to the underlying lattice of $\Bn$ and endowing it with the unique operation that upgrades it to a $p$-lattice. More specifically the $\neg$ operation in $\OBn$ is defined as follows: 
$\neg 0=1'$; $\neg 1'=0$; and $\neg a=a^*$ otherwise. Let $\PDLn$ denote the subvariety of $\PDL$ generated by $\OBn$ and
the full subcategory of $\PDL$ formed by its algebras.
In \cite{Le1970},  it is proved 
that every non-trivial proper subvariety of $\PDL$ coincides with some $\PDLn$. 
Observe that $\PDLZero$ and $\PDLOne$ are the varieties of Boolean algebras and  Stone algebras, respectively.
As for the case of $p$-lattices we let $\PDLnf$  denote the full subcategory of $\PDLn$ whose objects have finite universes.

Throughout the paper, we let the symbol $\N$ denote the set of natural numbers $\{1,2,3,\ldots\}$.


\subsection*{Duality for finite $p$-lattices}\ 

In \cite{Pr1975}, a topological duality for $p$-lattices is developed. In this paper we only need its restriction to finite objects, where the topology does not play any role.

Let $\X=(X,\leq)$ be a finite partially ordered set (poset, for short). Given  $Y\subseteq X$ a non-empty subset of $X$,
  let $\Y=(Y,\leq_{Y})$ denote the subposet of $\X$ whose universe is $Y$, that is, the poset such that its order relation is $\leq_{Y}\,=\,\leq\cap Y^2$. Let $\UpSet{Y}=\{x\in X\mid (\exists y\in Y)\ y\leq x\}$ and  $\DwSet{Y}=\{x\in X\mid (\exists y\in Y)\ x\leq y\}$ denote the up-set and down-set generated by $Y$, respectively. 
If $Y=\{y\}$ for some $y\in X$, we simply write $\UpSet{y}$ and $\DwSet{y}$. 
 Let $\min(\X)$ and $\max(\X)$ denote the set of minimal  and maximal elements of $\X$, respectively. Given $x\in X$, the set of minimal elements of $\X$ below $x$ will be denoted by $\min_{\X}(x)=\min(\X)\cap \DwSet{x}$. 


Let $\POS$ be the category whose objects are finite posets and whose arrows are {\it $p$-morphisms}, that is, monotone maps $\u\colon \X\to \Y$ satisfying $\u(\min_{\X}(x))=\min_{\Y}(\u(x))$ for each $x\in X$.
For each $n\in\N$, let $\POn$ denote the full subcategory of $\POS$ whose objects $\X=(X,\leq)$ satisfy 
$|\min_{\X}(x)|\leq n$, for each $x\in X$.
Let $\POS_0$ denote the full subcategory of $\POS$ whose objects 
$\X=(X,\leq)$ satisfy $X=\min(\X)$.
For each $\X=(X,\leq)\in\POS$ and each $n\in\N$, let $(\X)_n=(X_n,\leq_{X_n})$ denote the subposet of $\X$ such that $X_n=\{x\in X\mid |\min_{\X}(x)|\leq n\}$. Let further $(\X)_0=(\min(\X),=)$. 
The assignment $\X\mapsto (\X)_n$ can be extended to a functor from $\POS$ to $\POn$ by mapping each morphism $\u\colon \X\to \Y$ to its restriction $(\u)_n=\u\restrict{X_n}$. 

 The categories $\PDL^f$ and $\POS$ are dually equivalent 
Let $\D\colon \PDL^f\rightarrow\POS$ and $\E\colon \POS\rightarrow \PDL^f$ denote the functors that determine that duality.
We omit the  detailed description of these functors, since it plays no role in the paper (see~\cite{Pr1975} or~\cite{Ur1981}). The only property of $\D$ and $\E$, that will find use in the paper is that for each $n\in\N\cup\{0\}$, their restrictions to the categories $\PDLnf$ and $\POn$ also determine a dual equivalence between these categories. 

%
%



\subsection*{Duals of projective p-lattices and unifiers}
%
%
Let $\X=(X,\leq)$ be a finite poset. 
Then $\X$ is said to satisfy condition 
\begin{itemize}
\item[$\Cstar$:] if for each $x,y\in X$ the least upper bound $x\vee_{\X} y$ of $x$ and $y$  exists in $\X$ and it satisfies 
$  \min_{\X}(x\vee_{\X} y)=\min_{\X}(x)\cup\min_{\X}(y)$.
\end{itemize}
For each $n\in\N$, the poset $\X$ is said to satisfy condition
\begin{itemize}
\item[$\Cnstar$:] if for each $x,y\in X$ such that $|\min_{\X}(x)\cup\min_{\X}(y)|\leq n$, the least upper bound $x\vee_{\X} y$  exists in $\pos{X}$ and satisfies 
$\min_{\X}(x\vee_{\X} y)=\min_{\X}(x)\cup\min_{\X}(y).$ 
\end{itemize}

It is easy to verify that $\X$ satisfies $\Cnstar$ if and only if $(\X)_n$ satisfies $\Cnstar$. Also observe that  $\X$ satisfies $\Cstar$ if and only if it satisfies $\Cnstar$ for each $n\in\N$.

\begin{theorem}\label{Theo:Proj}{\rm \cite{Ur1981}} Let $\alg{A}\in\PDL^f$. Then
\begin{itemize}
\item[(i)]$\alg{A}$ is projective in $\PDL$ if and only if $\D(\alg{A})$ is non-empty and satisfies condition $\Cstar$.
\item[(ii)] For each $n\in\N$, $\alg{A}$ is projective in $\PDLn$ if and only if $\D(\alg{A})$ belongs to $\POn$, $\D(\alg{A})$ is non-empty  and  satisfies condition $\Cnstar$. 
\end{itemize}
\end{theorem}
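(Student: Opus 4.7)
The plan is to pass through the duality. By general universal algebra, a finitely generated algebra $\alg{A}\in\variety{V}$ is projective in $\variety{V}$ if and only if it is a retract of some finitely generated free algebra $\alg{Free}_{\variety{V}}(m)$. Since the restrictions of $\D$ and $\E$ give a dual equivalence between $\PDL^f$ and $\POS$ (and between $\PDLnf$ and $\POn$), and equivalences of categories preserve and reflect retract diagrams, this translates to the requirement that $\D(\alg{A})$ be a retract, in $\POS$ (respectively $\POn$), of $\D(\alg{Free}_{\variety{V}}(m))$. The theorem therefore splits into three sub-claims: (a) the dual of a finitely generated free algebra satisfies the relevant condition; (b) the relevant condition is preserved under retracts in the relevant category; and (c) conversely, every non-empty finite poset satisfying the condition is a retract of the dual of some free algebra.

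For the $(\Rightarrow)$ direction I would establish (a) and (b). Claim (a) is verified directly from the explicit combinatorial description of free $p$-lattices in \cite{Ur1981}: one checks that $\D(\alg{Free}_{\PDL}(m))$ has all binary joins and satisfies \Cstar{}, and that the dual of $\alg{Free}_{\PDLn}(m)$ lies in $\POn$ and satisfies \Cnstar{}. For claim (b), let $\u\colon\X\to\Y$ be a $p$-morphism with section $\iota\colon\Y\to\X$ in the appropriate category. Given $y_1,y_2\in Y$ (with the cardinality restriction $|\min_\Y(y_1)\cup\min_\Y(y_2)|\leq n$ for \Cnstar{}), set $w=\u(\iota(y_1)\vee_\X\iota(y_2))$. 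Monotonicity yields $w\geq y_1,y_2$, and if $y'\geq y_1,y_2$ then $\iota(y')\geq\iota(y_1)\vee_\X\iota(y_2)$, whence $y'=\u(\iota(y'))\geq w$; so $y_1\vee_\Y y_2$ exists and equals $w$. Combining the defining $p$-morphism identity $\u(\min_\X(\,\cdot\,))=\min_\Y(\u(\,\cdot\,))$ with \Cstar{} (or \Cnstar{}) in $\X$ gives
\[
\min_\Y(y_1\vee_\Y y_2)=\u(\min_\X(\iota(y_1)))\cup\u(\min_\X(\iota(y_2)))=\min_\Y(y_1)\cup\min_\Y(y_2),
\]
so the condition descends to $\Y$; non-emptiness and membership in $\POn$ are trivially preserved.

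The $(\Leftarrow)$ direction, claim (c), is the main obstacle. Given a non-empty $\X$ satisfying \Cstar{}, one must exhibit an embedding of $\X$ into $\pos{F}_m:=\D(\alg{Free}_{\PDL}(m))$ for suitable $m$ together with a $p$-morphism retraction. A natural choice is $m=|\min(\X)|$; fixing an enumeration $\min(\X)=\{x_1,\ldots,x_m\}$, the embedding $\iota$ is defined on minimal elements by sending each $x_i$ to a distinguished generator-dual minimal element of $\pos{F}_m$, and on a general $x\in X$ by sending it to the unique element of $\pos{F}_m$ whose minimum set matches $\iota(\min_\X(x))$. Existence and uniqueness of this element, together with the fact that $\iota$ is order-preserving and a $p$-morphism onto its image, rely on \Cstar{} in both $\pos{F}_m$ and $\X$. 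The retraction $\u\colon\pos{F}_m\to\X$ is then defined by using \Cstar{} in $\X$ to turn the minimum set $\min_{\pos{F}_m}(z)$ of each $z\in\pos{F}_m$ into a canonical join in $\X$. The delicate points are that this retraction must be well-defined regardless of which minimum sets arise in $\pos{F}_m$ (some of which may not lie entirely in $\iota(\X)$) and must satisfy the $p$-morphism identity; these verifications are exactly where conditions \Cstar{} and \Cnstar{} are calibrated to match the combinatorics of free $p$-lattices. The argument for (ii) is structurally identical, using $\alg{Free}_{\PDLn}(m)$ in place of $\alg{Free}_{\PDL}(m)$, and appealing to \Cnstar{} and membership in $\POn$ at every step.
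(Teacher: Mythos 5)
First, a framing point: the paper does not prove this theorem at all --- it is imported verbatim from Urquhart \cite{Ur1981} --- so there is no internal proof to match your argument against; what follows is an assessment of your sketch on its own terms. Your reduction to retracts via the dual equivalence is the right setup, and your claim (b) (closure of \Cstar{} and \Cnstar{} under retracts in $\POS$, resp.\ $\POn$) is essentially correct: the join computation and the $\min$ computation both go through, though for \Cnstar{} you should say explicitly why $|\min_{\X}(\iota(y_1))\cup\min_{\X}(\iota(y_2))|\leq n$ before invoking \Cnstar{} in $\X$ (it follows because the section $\iota$ is an injective $p$-morphism, so this set is $\iota(\min_{\Y}(y_1)\cup\min_{\Y}(y_2))$).

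The genuine gap is in claim (c), and it is not merely a matter of unverified details. Two concrete steps fail. First, the choice $m=|\min(\X)|$ of the number of generators cannot work: the poset $\pow{m}$ has a single minimal element, satisfies \Cstar{}, and is the dual of a projective algebra (as the paper's Example notes), yet $|\pow{m}|=2^m$ is unbounded while $\D(\alg{Free}_{\PDL}(1))$ is a fixed finite poset ($\PDL$ is locally finite); a retract of a finite poset has at most as many elements, so $\pow{m}$ is not a retract of $\D(\alg{Free}_{\PDL}(1))$ for large $m$. Second, the map sending $x$ to ``the unique element of $\D(\alg{Free}_{\PDL}(m))$ whose minimum set matches $\iota(\min_{\X}(x))$'' is ill-defined: elements of the dual of a free $p$-lattice are not determined by their min-sets. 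Indeed, since $\E(\pow{m})$ is projective, some $\D(\alg{Free}_{\PDL}(k))$ contains an isomorphic copy of $\pow{m}$ (the image of the section), i.e.\ $2^m$ distinct elements all sharing the same one-element min-set. So both the embedding and the retraction in your construction need a genuinely different design --- this is precisely the content of Urquhart's theorem, and the ``delicate points'' you defer are where the actual work lies. As written, the hard direction of both (i) and (ii) remains unproved.
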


For later use we  define $(*_0)$: a finite poset $\X$ satisfies condition $(*_0)$ if it is non-empty. Since each non-trivial finite algebra in $\PDLZero$  is projective, we could replace $\N$ by $\N\cup\{0\}$ in Theorem~\ref{Theo:Proj}(ii) and the result will remain valid.

\begin{example}For each $m\in\N$, let $$
\pow{m}=(\pow{\{1,\ldots,m\}},\subseteq)
$$
be the poset of subsets of $\{1,\ldots,m\}$ ordered by inclusion and 
$$\Pm=\bigl(\{S\subseteq\{1,\ldots,m\}\mid S\neq\emptyset\},\subseteq\bigr).$$
The posets $\pow{m}$ and $\Pm$  are join-semilattices.  It is straightforward to check that $\min(\Pm)=\{\{1\},\ldots \{m\}\}$ and that $\pow{m}$ and $\Pm$ satisfy $\Cstar$.
Now Theorem~\ref{Theo:Proj} proves that $\E(\pow{m})$ and $\E(\Pm)$ are projective in  $\PDL$.
Observe that $S\subseteq\{1,\ldots,m\}$ is in $ (\Pm)_n$ if $1\leq|S|\leq n$. Then $(\Pm)_n\in \POn$ satisfy $\Cnstar$ and $\E((\Pm)_n)$ is projective in $\PDLn$. 
\begin{figure}
\begin{pspicture}(0,-.7)(12,4)
\psdots(1,1)(1,2)(2,1)(2,2)(3,1)(3,2)(2,3)(2,0)
\psline(2,0)(1,1)(1,2)(2,3)
\psline(2,0)(2,1)
\psline(2,2)(2,3)
\psline(2,0)(3,1)(3,2)(2,3)
\psline(1,1)(2,2)(3,1)
\psline(1,2)(2,1)(3,2)
\rput[c](2,-.5){(a) $\mathcal{P}(3)$}

\psdots(5,1)(5,2)(6,1)(6,2)(7,1)(7,2)(6,3)
\psline(5,1)(5,2)(6,3)
\psline(6,2)(6,3)
\psline(7,1)(7,2)(6,3)
\psline(5,1)(6,2)(7,1)
\psline(5,2)(6,1)(7,2)
\rput[c](6,-.5){(a) $\pos{P}(3)$}

\psdots(9,1)(9,2)(10,1)(10,2)(11,1)(11,2)
\psline(9,1)(9,2)
\psline(11,1)(11,2)
\psline(9,1)(10,2)(11,1)
\psline(9,2)(10,1)(11,2)
\rput[c](10,-.5){(a) $(\pos{P}(3))_2$}

\end{pspicture}
\caption{}\label{Fig:Pm}
\end{figure}
\end{example}

Combining the dualities between $\PDL$ and $\POS$, and between $\PDLn$ and $\POn$ with Theorem~\ref{Theo:Proj}, we  can translate the algebraic unification theory of $\PDL$ and $\PDLn$ into their dual categories as follows.
Let $\X\in \POS$. Then  $\Un{\POS}(\X)$ denotes the class of morphisms 
$\u\colon \Y\to \X $ with $\Y\in\POS$ satisfying $\Cstar$. For $\u \colon\Y\to \X,\nu\colon  \Z\to \X\in \Un{\POS}(\X)$, then we write $\u\leqn_{\POS} \nu$ if there exists a morphism $\psi\colon \Y\to \Z$ such that $\nu\circ \psi=\u$. 
For each $\alg{A}\in\PDL$ the preordered classes $(\Un{\POS}(\D(\alg{A})),\leqn_{\POS})$ and $(\Un{\PDL}(\alg{A}),\leqn_{\PDL})$ are categorically equivalent and have the same type.
Also observe that from Theorem~\ref{Theo:Proj}, it is easy to see that a $p$-lattice  admits a unifier if and only if it is non-trivial, equivalently, its dual poset is non-empty.

Similarly, for  $\X\in \POn$ we let $\Un{\smash \POn}(\X)$ denote the class of $p$-morphisms $\u$ from $\Y$ into $\X $ with $\Y\in\POn$ satisfying $\Cnstar$. The preordered classes $(\Un{\POn}(\D(\alg{A})),\leqn_{\POS})$ and $(\Un{\PDLn}(\alg{A}),\leqn_{\PDLn})$ are categorically equivalent and they have the same type.

In the rest of the paper we will use this translation and develop our results in the categories $\POS$ an $\POn$ instead of in $\PDL^f$ and $\PDLn$.

\section{Special product of finite posets}\label{Sec:SpecialProd}
In this section we introduce a construction in $\POS$ that preserves $\Cnstar$  and $\Cstar$ (in a sense that will be made clear in Theorem~\ref{Theo:OdotStar}). This construction posses certain properties (Theorems~\ref{Theo:OdotStar} and~\ref{Theo:Coproduct}) that will be used to study the unification type of posets in the rest of the paper.

Given finite posets $\X=(X,\leq_{\X})$ and $\Y=(Y,\leq_{\Y})$, we define $\X\odot\Y=(Z,\leq_{\Z})$ as follows:
$$
Z=
(X\times\{\bot\})\cup (X\times Y)\cup (\{\bot\}\times Y),
$$
where $\bot\notin X\cup Y$,
 and
$$
(x,y)\leq_{\Z} (x',y')\iff \bigl((x=\bot \mbox{ or } x\leq_{\X} x')\mbox{ and }(y=\bot \mbox{ or } y\leq_{\Y} y')\bigr).
$$
Clearly, $\X\odot\Y$ is the subposet of the product poset $(\{\bot\}\oplus \X)\times (\{\bot\}\oplus \Y)$ obtained by removing the element $(\bot,\bot)$, where $\{\bot\}\oplus \X$ and $\{\bot\}\oplus\Y$ are constructed by adding a fresh bottom element $\bot$ to  $\X$ and $\Y$, respectively.
It is easy to see that the maps $\iota_{\X}\colon \X \to \X\odot\Y$ and $\iota_{\Y}\colon \Y \to \X\odot\Y$ defined by $\iota_{\X}(x)=(x,\bot)$ and  $\iota_{\Y}(y)=(\bot,y)$ are $p$-morphisms, and that $\X$ and $\Y$ are isomorphic in $\POS$ to the subposets of $\X\odot\Y$ whose universes are $\iota_{\X}(X)$ and $\iota_{\Y}(Y)$, respectively.

\begin{example}\label{Ex:Eta}
Let $m,k\in\N$. Then $\pos{P}(m+k)$ and $\Pm\odot\pos{P}(k)$ are isomorphic in $\POS$. Indeed, let $\eta_{m,k}\colon\pos{P}(m+k)\to\Pm\odot\pos{P}(k)$ be the map defined by 
$$\eta_{m,k}(T)=\begin{cases}
(\bot, T') & \mbox{if }T\cap\{1,\ldots,m\}=\emptyset;\\
(T,\bot) & \mbox{if }T\cap\{m+1,\ldots,m+k\}=\emptyset;\\
(T\cap\{1,\ldots,m\},T')&\mbox{otherwise};
\end{cases}$$
 where $T'=\{i-m\mid i\in T\cap\{m+1,\ldots,m+k\}\}$.
 Then $\eta_{m,k}$ is a $p$-morphism and an isomorphism in $\POS$.
\end{example}

In the following theorems we present the  properties  of the construction $\X\odot\Y$ that we shall use in this paper.

\begin{theorem}\label{Theo:OdotStar}
Let $\X,\Y\in\POS$. Then 
\begin{itemize}
\item[(i)] $\min(\X\odot\Y)=\min(\X)\times\{\bot\}\cup\{\bot\}\times\min(\Y)$;
\item[(ii)] for each $x\in X$ and $y\in Y$;
\begin{align*}
\textstyle\min_{\X\odot\Y}(x,\bot)&\textstyle=\min_{\X}(x)\times\{\bot\},\\
\textstyle\min_{\X\odot\Y}(\bot,y)&\textstyle=\{\bot\}\times\min_{\Y}(y),\\
\textstyle\min_{\X\odot\Y}(x,y)&\textstyle=\min_{\X}(x)\times\{\bot\}\cup\{\bot\}\times\min_{\Y}(y);
\end{align*}
\item[(iii)] $(\X\odot\Y)_n$ satisfies $\Cnstar$ if and only if $\X$ and $\Y$ satisfy $\Cnstar$;
\item[(iv)]  $\X\odot\Y$ satisfies $\Cstar$ if and only if $\X$ and $\Y$ satisfy $\Cstar$.
\end{itemize}
\end{theorem}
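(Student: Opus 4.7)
The plan is to peel off parts (i) and (ii) as coordinatewise unpackings of the definition of $\leq_{\Z}$, then build (iii) and (iv) on top of them. In $\X\odot\Y$ the condition ``$y'=\bot$ or $y'\leq_{\Y}\bot$'' collapses to $y'=\bot$ (since $\bot\notin Y$), so anything $\leq (x,\bot)$ already lies in $X\times\{\bot\}$, and symmetrically for $(\bot,y)$; meanwhile $(x,y)$ sits strictly above $(x,\bot)$ for every $y\in Y$ and so is never minimal. This gives (i), and (ii) is the same coordinatewise bookkeeping applied to principal downsets.

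For the forward direction of (iii), I plan to use the preliminary observation that $\X\odot\Y$ satisfies $\Cnstar$ iff $(\X\odot\Y)_n$ does, and carry out the argument in $\X\odot\Y$ itself. Given $x,x'\in X$ with $|\min_{\X}(x)\cup\min_{\X}(x')|\leq n$, part (ii) translates this into $|\min_{\Z}(x,\bot)\cup\min_{\Z}(x',\bot)|\leq n$, so $\Cnstar$ supplies a join $(z,w)$ whose min-set is $(\min_{\X}(x)\cup\min_{\X}(x'))\times\{\bot\}$. Comparing with the three formulas of (ii) forces $w=\bot$ and $\min_{\X}(z)=\min_{\X}(x)\cup\min_{\X}(x')$; and any competing upper bound $z''\in X$ of $x,x'$ lifts to $(z'',\bot)\in \X\odot\Y$ without restriction, so $(z,\bot)\leq (z'',\bot)$ and therefore $z=x\vee_{\X}x'$. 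The $\Y$-side is symmetric.

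For the backward direction of (iii), and for (iv), the strategy is a case analysis on the three shapes $(x,\bot)$, $(\bot,y)$, $(x,y)$ of a pair $u,v\in\X\odot\Y$. By (ii), the bound on $|\min_{\Z}(u)\cup\min_{\Z}(v)|$ decomposes as a sum of bounds on the $\X$- and $\Y$-components, both $\leq n$; the hypothesis $\Cnstar$ (resp.\ $\Cstar$) on $\X$ and $\Y$ then furnishes the coordinatewise joins, and gluing them by the rule $(x_1,y_1)\vee_{\Z}(x_2,y_2)=(x_1\vee_{\X}x_2,\,y_1\vee_{\Y}y_2)$ (with the convention $\bot\vee a=a$) produces a candidate whose least-upper-bound property is immediate from the definition of $\leq_{\Z}$ and whose min-set is the required union by another application of (ii). Part (iv) is this same analysis without the size bound, and indeed follows formally from (iii) by letting $n$ range over all natural numbers.

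The only real obstacle lies in the forward direction: a naive attempt to argue directly inside $(\X\odot\Y)_n$ fails because a competing upper bound $z''\in X$ of $x,x'$ may have $|\min_{\X}(z'')|>n$, so $(z'',\bot)$ leaves the truncation and cannot be used to test minimality of the join produced there. Routing through the untruncated $\X\odot\Y$ via the preliminary equivalence sidesteps the issue entirely.
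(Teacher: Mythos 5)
Your proposal is correct and follows essentially the same route as the paper: (i) and (ii) by coordinatewise inspection of $\leq_{\Z}$, the ``if'' direction of (iii) by the coordinatewise join $(s,t)$ with the $\bot$-conventions, the ``only if'' direction by transporting $x,x'$ to $(x,\bot),(x',\bot)$ and reading off the join there (the paper uses the untruncated $\X\odot\Y$ at exactly the same point, via the same preliminary remark that $\Cnstar$ for $\X\odot\Y$ and for $(\X\odot\Y)_n$ are equivalent), and (iv) as the conjunction of (iii) over all $n$. The only cosmetic difference is that you force the second coordinate of the join to be $\bot$ by comparing min-sets, where the paper observes $(u,\bot)$ is an upper bound below $(u,v)$ and invokes leastness.
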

\begin{proof}
The proofs of (i) and (ii) follow   from the fact that $(\bot,y),(x,\bot)\leq (x,y)$ for each $(x,y)\in X\times Y$.

To prove (iii) first assume that $\X$ and $\Y$ both satisfy $\Cnstar$. Let $(x,y),(x',y')\in\X\odot\Y$ be such that $|\min_{\X\odot\Y}(x,y)\cup\min_{\X\odot\Y}(x',y')|\leq n$.
By (ii), if  $x\neq\bot\neq x'$, then 
$|\min_{\X}(x)\cup\min_{\X}(x')|\leq |\min_{\X\odot\Y}(x,y)\cup\min_{\X\odot\Y}(x',y')|\leq n$. By $\Cnstar$, the least upper bound $x\vee_{\X}x'$ exists in $\X$ and $\min_{\X}(x)\cup\min_{\X}(x')=\min_{\X}(x\vee_{\X}x')$. The same argument applies when $y\neq\bot\neq y'$. Then we define
$$
s=\begin{cases}
\bot& \mbox{if }x=x'=\bot;\\
x&\mbox{if }x'=\bot\mbox{ and }x\neq\bot;\\
x'&\mbox{if }x=\bot\mbox{ and }x'\neq\bot;\\
x\vee_{\X}x'&\mbox{if }x'\neq\bot\neq x;
\end{cases}
\ \mbox{ and }\ 
t=\begin{cases}
\bot& \mbox{if }y=y'=\bot;\\
y&\mbox{if }y'=\bot\mbox{ and }y\neq\bot;\\
y'&\mbox{if }y=\bot\mbox{ and }y'\neq\bot;\\
y\vee_{\Y}y'&\mbox{if }y'\neq\bot\neq y.
\end{cases}
$$
Now it is tedious but straightforward to check that in each case the pair $(s,t)$ coincides with $(x,y)\vee_{\X\odot\Y}(x',y')$ and that $\min_{\X\odot\Y}(s,t)=\min_{\X\odot\Y}(x,y)\cup\min_{\X\odot\Y}(x',y')$.

The converse follows from the fact that $\X$ and $\Y$ are isomorphic to the subposets of ~$\X\odot\Y$ whose universes are $\iota_{\X}(X)$ and $\iota_{\Y}(Y)$, respectively. 
More precisely, let ${x,x'\in X}$ be such that $|\min{\X}(x)\cup\min_{\X}(x')|\leq n$. 
From (ii), it follows that $|\min_{\X}(x)\cup\min_{\X}(x')|=|\min_{\X\odot\Y}(x,\bot)\cup\min_{\X\odot \Y}(x',\bot)|\leq n$. 
Since  $\X\odot\Y$ satisfies $\Cnstar$,
 there exists $(u,v)\in \X\odot\Y$ such that $(x,\bot),(x',\bot)\leq (u,v)$ and satisfying $\min_{\X\odot\Y}(u,v)=\min_{\X\odot\Y}(x,\bot)\cup\min_{\X\odot \Y}(x',\bot)$. 
It follows that $x,y\leq u$, and $(x,\bot),(x',\bot)\leq (u,\bot)\leq (u,v)$. 
Therefore $\min_{\X\odot\Y}(u,\bot)=\min_{\X\odot\Y}(u,v)$, and, again by (ii), we conclude $\min_{\X}(u)=\min_{\X}(x)\cup\min_{\X}(x')$ 

The proof of (iv) follows directly from (iii).
\end{proof}


\begin{theorem}\label{Theo:Coproduct}
Let $\X_1,\X_2,\Y_1,\Y_2\in\POS$; $\u_1\colon\X_1\to\Y_1$ and $\u_2\colon\X_2\to\Y_2$ be $p$-morphisms. Then the map $\u_1\odot\u_2\colon \X_1\odot\X_2\to\Y_1\odot\Y_2$ defined by
$$
(\u_1\odot\u_2)(x,y)=\begin{cases}
(\u_1(x),\bot)&\mbox{ if } y=\bot;\\
(\bot,\u_2(y))&\mbox{ if } x=\bot;\\
(\u_1(x),\u_2(y))& \mbox{otherwise};
\end{cases}
$$
is a $p$-morphism {\rm (}not necessarily unique{\rm )} such that the diagram in Fig.~\ref{Fig:OdotMap} commutes.
\begin{figure} [ht]
\begin{center}
\begin{tikzpicture} 
[auto, 
 text depth=0.25ex,
] 
\matrix[row sep= .9cm, column sep= .9cm]
{
\node (X1) {$\X_1$}; &\node (X12) {$\X_1\odot\X_2$}; &\node (X2) {$\X_2$};\\
\node (Y1) {$\Y_1$}; &\node (Y12) {$\Y_1\odot\Y_2$}; &\node (Y2) {$\Y_2$};\\
};
\draw [->] (X1) to node {$\iota_{\X_1}$} (X12);
\draw [->] (X2) to node [swap] {$\iota_{\X_2}$} (X12);
\draw [->] (Y1) to node [swap] {$\iota_{\Y_1}$} (Y12);
\draw [->] (Y2) to node {$\iota_{\Y_2}$} (Y12);
\draw [->] (X1) to node [swap] {$\u_1$} (Y1);
\draw [->] (X2) to node {$\u_2$} (Y2);
\draw [->] (X12) to node {$\u_1\odot\u_2$} (Y12);

\end{tikzpicture}
\end{center}\caption{}\label{Fig:OdotMap}
\end{figure} 
\end{theorem}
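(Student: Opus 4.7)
The plan is to verify directly from the definition that $\u_1\odot\u_2$ is (i) well-defined as a map into $\Y_1\odot\Y_2$, (ii) monotone, (iii) a $p$-morphism (preserves minima-below), and (iv) makes the diagram commute; in all four items the argument reduces to a case split on whether the first or second coordinate equals $\bot$, controlled by Theorem~\ref{Theo:OdotStar}.

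First, I would dispatch the easy items. For well-definedness, note that a point of $\X_1\odot\X_2$ is in exactly one of the three pieces $X_1\times\{\bot\}$, $\{\bot\}\times X_2$, $X_1\times X_2$; the three branches of the definition cover these cases, and in each branch the image never has both coordinates equal to $\bot$, so it lies in $\Y_1\odot\Y_2$. Commutativity of the diagram is immediate: $(\u_1\odot\u_2)(\iota_{\X_1}(x))=(\u_1\odot\u_2)(x,\bot)=(\u_1(x),\bot)=\iota_{\Y_1}(\u_1(x))$, and symmetrically for the $\X_2$-side. For monotonicity, suppose $(x,y)\leq(x',y')$ in $\X_1\odot\X_2$; writing the image coordinates separately, each coordinate is either $\bot$ or the image of a point under $\u_i$, and whenever the domain coordinate is non-$\bot$ the target coordinate is too and monotonicity of $\u_i$ applies.

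The main content is verifying the $p$-morphism equation
\[
(\u_1\odot\u_2)\bigl(\min_{\X_1\odot\X_2}(x,y)\bigr)=\min_{\Y_1\odot\Y_2}\bigl((\u_1\odot\u_2)(x,y)\bigr),
\]
where both sides are to be computed by Theorem~\ref{Theo:OdotStar}(ii). I would split into the three cases $y=\bot$, $x=\bot$, and $x,y\neq\bot$. In the first, the LHS is $\u_1(\min_{\X_1}(x))\times\{\bot\}$, which by the $p$-morphism property of $\u_1$ equals $\min_{\Y_1}(\u_1(x))\times\{\bot\}$; and the RHS is $\min_{\Y_1\odot\Y_2}(\u_1(x),\bot)=\min_{\Y_1}(\u_1(x))\times\{\bot\}$. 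The second case is symmetric. In the third case, Theorem~\ref{Theo:OdotStar}(ii) gives
\[
\min_{\X_1\odot\X_2}(x,y)=\bigl(\min_{\X_1}(x)\times\{\bot\}\bigr)\cup\bigl(\{\bot\}\times\min_{\X_2}(y)\bigr),
\]
and applying $\u_1\odot\u_2$ sends the two pieces respectively to $\u_1(\min_{\X_1}(x))\times\{\bot\}$ and $\{\bot\}\times\u_2(\min_{\X_2}(y))$, which by the $p$-morphism property of $\u_1,\u_2$ become $\min_{\Y_1}(\u_1(x))\times\{\bot\}$ and $\{\bot\}\times\min_{\Y_2}(\u_2(y))$; the union is precisely $\min_{\Y_1\odot\Y_2}(\u_1(x),\u_2(y))$ by Theorem~\ref{Theo:OdotStar}(ii) applied on the target side.

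There is no genuine obstacle: everything is driven by Theorem~\ref{Theo:OdotStar}(ii), which lets the three pieces of $\X_1\odot\X_2$ be treated independently; the only mild pitfall is bookkeeping the $\bot$-coordinates so that one does not inadvertently claim $\u_i(\bot)$ where none is defined, which is why the case split must come before appealing to monotonicity or to the $p$-morphism property of the factors. The parenthetical remark that $\u_1\odot\u_2$ need not be unique reflects the fact that any other $p$-morphism $\X_1\odot\X_2\to\Y_1\odot\Y_2$ agreeing with $\u_1,\u_2$ on the embedded copies of $\X_1$ and $\X_2$ still has freedom on the "mixed" part $X_1\times X_2$, so no uniqueness claim needs to be addressed.
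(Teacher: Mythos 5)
Your proof is correct. The paper in fact states Theorem~\ref{Theo:Coproduct} without any proof, treating the verification as routine, and your case analysis (splitting on which coordinates are $\bot$, computing both sides of the $p$-morphism equation via Theorem~\ref{Theo:OdotStar}(ii), and checking monotonicity and commutativity directly) is exactly the argument the author leaves to the reader; there is nothing to compare and no gap to report.
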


\section{Unification type of subvarieties of $p$-lattices}\label{Sec:UnifType}

The main result in this section is stated in Theorem~\ref{The:ClasSubvar}, where we prove that the only non-trivial subvariety of $\PDL$ not having type $0$ is the variety of Boolean algebras. The latter is known to have type $1$, since each finitely presented (equivalently, finite) Boolean algebra is projective (see~\cite{MN1989}).

In \cite[Theorem 5.9]{Gh1997}, it is claimed that $\PDL$ has type $0$. The example  presented by the author is the 
poset $\pos{G}=(\{a,b,c,d,e,f\},\leq)$ 
(see Fig.~\ref{Fig:G}).
\begin{figure}[ht]
\begin{pspicture}(0,-1.20)(4,3.5)
\psdots(2,0)(1.3,1)(2.7,1)(1.3,2)(2.7,2)(2,3)
\psline(2,0)(1.3,1)(1.3,2)(2.7,1)(2.7,2)(2,3)(1.3,2)
\psline(2,0)(2.7,1)
\psline(1.3,1)(2.7,2)
\rput[c](2,-.3){$a$}
\rput[c](1,1){$b$}
\rput[c](3,1){$c$}
\rput[c](1,2){$d$}
\rput[c](3,2){$e$}
\rput[c](2,3.3){$f$}
\rput[c](2,-1){$\pos{G}$}
\end{pspicture}
\caption{}\label{Fig:G}
\end{figure}

In the mentioned theorem it is claimed that $\Un{\POS}(\pos{G})$ is directed. Even though the claim is correct, there is a small mistake in the proof. 
Given two maps $\u_1\colon\pos{Q}_1\to\pos{G}$ and $\u_2\colon\pos{Q}_2\to\pos{G}$ that are in $\Un{\POS}(\pos{G})$, a third map was constructed from $\u\colon\pos{R}\to\pos{G}$  where the poset $\pos{R}$ is the disjoint union of $\pos{Q}_1$ and $\pos{Q}_2$ with a new top element $\top$ and the map $\u$ is defined  by: $\u(x)=\upsilon_i(x)$ if $x\in Q_i$,  $\u(\top)=f$ is in $\Un{\pos{G}}$. 
The problem with this construction is that~$\pos{R}$ does not necessarily satisfy $\Cstar$, as the following example shows:
Let $\pos{Q}_1=\pos{Q}_2=\pos{P}(2)$. Then $\pos{R}$ as constructed above is ordered as in Fig.~\ref{Fig:BadEx}. Now observe that $\{1\}\vee_{\pos{R}}\{1'\}=\top$ and $\{1,1'\}\neq\min_{\pos{R}}(\top)$.
\begin{figure}[ht]
\begin{pspicture}(0,.3)(10,3.5)
\psdots(1,1)(2,1)(1.5,2)
\psdots(3,1)(4,1)(3.5,2)
\psdots(6,1)(7,1)(6.5,2)(8,1)(9,1)(8.5,2)(7.5,3)
\psline(1,1)(1.5,2)(2,1)
\psline(3,1)(3.5,2)(4,1)
\psline(6,1)(6.5,2)(7,1)
\psline(8,1)(8.5,2)(9,1)
\psline(6.5,2)(7.5,3)(8.5,2)
\rput[c](1,.5){$\{1\}$}
\rput[c](2,.5){$\{2\}$}
\rput[c](3,.5){$\{1'\}$}
\rput[c](4,.5){$\{2'\}$}
\rput[c](6,.5){$\{1\}$}
\rput[c](7,.5){$\{2\}$}
\rput[c](8,.5){$\{1'\}$}
\rput[c](9 ,.5){$\{2'\}$}
\rput[c](1.5,2.3){$\{1,2\}$}
\rput[c](3.5,2.3){$\{1',2'\}$}
\rput[c](6,2.3){$\{1,2\}$}
\rput[c](9,2.3){$\{1',2'\}$}
\rput[c](7.5,3.3){$\top$}

\end{pspicture}
\caption{}\label{Fig:BadEx}
\end{figure}

Nevertheless, the claims that $\Un{\POS}(\pos{G})$ is directed and has type $0$ are both true. It can be proved that $\Un{\POS}(\pos{G})$  is directed using the special product construction developed in Section~\ref{Sec:SpecialProd}.
In  Lemma~\ref{Lem:DirG} we present a slightly stronger result.

 Observe that the poset $\pos{G}$ is in $\POn$ for each $n\in\N$.

\begin{lemma}\label{Lem:DirG}
Let $\pos{G}$ be defined as  above. Then the preordered classes $\Un{\POS}(\pos{G})$ and $\Un{\POn}(\pos{G})$ for $n\in\N$ are directed.
\end{lemma}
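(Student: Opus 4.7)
The plan is to apply the special product construction $\odot$ from Section~\ref{Sec:SpecialProd} to any two given unifiers, and then to exploit the fact that $\pos{G}$ has a unique minimum and a unique maximum in order to define a $p$-morphism from the product back to $\pos{G}$.

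Given $\u_1\colon \pos{Q}_1\to \pos{G}$ and $\u_2\colon \pos{Q}_2\to \pos{G}$ in $\Un{\POS}(\pos{G})$, I would set $\pos{R}=\pos{Q}_1\odot\pos{Q}_2$ and define $\u\colon \pos{R}\to\pos{G}$ by
$$
\u(x,y)=
\begin{cases}
\u_1(x) & \mbox{if } y=\bot,\\
\u_2(y) & \mbox{if } x=\bot,\\
f & \mbox{otherwise,}
\end{cases}
$$
where $f$ is the maximum of $\pos{G}$. For the $\POn$ version one replaces $\pos{R}$ by $(\pos{Q}_1\odot\pos{Q}_2)_n$ and $\u$ by its restriction.

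The first step is to check that $\u$ is a $p$-morphism. Monotonicity follows from the monotonicity of $\u_1,\u_2$ and the fact that $f=\max(\pos{G})$. For the minimal-element condition, the key observation is that $\pos{G}$ has a unique minimal element $a$; since $\u_1,\u_2$ are $p$-morphisms into $\pos{G}$, each sends every minimal element of its domain to $a$. Combining this with Theorem~\ref{Theo:OdotStar}(i)--(ii), one gets $\u(\min_{\pos{R}}(z))=\{a\}=\min_{\pos{G}}(\u(z))$ for every $z\in\pos{R}$ (and likewise in $(\pos{R})_n$, using that every minimal element of $\pos{Q}_1\odot \pos{Q}_2$ has only one minimal below it and therefore lies already in $(\pos{R})_n$). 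The next step is to verify that $\pos{R}$ is a valid codomain: for the $\POS$ case, Theorem~\ref{Theo:OdotStar}(iv) gives that $\pos{R}$ satisfies $\Cstar$ because both $\pos{Q}_1,\pos{Q}_2$ do; for the $\POn$ case, Theorem~\ref{Theo:OdotStar}(iii) gives $\Cnstar$ for $(\pos{Q}_1\odot\pos{Q}_2)_n$, and this poset belongs to $\POn$ by definition.

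Finally, the relations $\u_1\leqn\u$ and $\u_2\leqn\u$ are witnessed by the canonical embeddings $\iota_{\pos{Q}_i}\colon \pos{Q}_i\to\pos{R}$ introduced in Section~\ref{Sec:SpecialProd}, since $\u\circ\iota_{\pos{Q}_i}=\u_i$ straight from the definition. In the $\POn$ case one checks only that $\iota_{\pos{Q}_i}$ corestricts to $(\pos{Q}_1\odot\pos{Q}_2)_n$, which is immediate from $|\min_{\pos{R}}(\iota_{\pos{Q}_i}(x))|=|\min_{\pos{Q}_i}(x)|\leq n$.

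The main difficulty, compared with the flawed argument recalled just before the statement, was precisely the need for the common upper bound to satisfy $\Cstar$ (respectively $\Cnstar$). The use of $\odot$ instead of a disjoint union with a forced top eliminates this obstacle through Theorem~\ref{Theo:OdotStar}; everything else is essentially bookkeeping, enabled by the very simple shape of $\pos{G}$, which has both a unique minimum (making the $p$-morphism condition on $\u$ trivial) and a unique maximum (allowing the extra elements $(x,y)$ with $x,y\neq\bot$ to be absorbed at~$f$).
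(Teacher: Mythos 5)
Your proposal is correct and follows essentially the same route as the paper's own proof: the paper also forms $\pos{R}=\pos{Q}_1\odot\pos{Q}_2$ (respectively $(\pos{Q}_1\odot\pos{Q}_2)_n$), defines the same map $\u$ sending mixed pairs to $f$, invokes Theorem~\ref{Theo:OdotStar}(ii)--(iv) for the $p$-morphism and $\Cstar$/$\Cnstar$ checks, and witnesses $\u_i\leqn\u$ via the embeddings $\iota_{\pos{Q}_i}$. No gaps.
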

\begin{proof}

We first prove that $\Un{\POS}(\pos{G})$ is directed.
Suppose $\u_1\colon\pos{Q}_1\to\pos{G}$ and $\u_2\colon\pos{Q}_2\to\pos{G}$ are in $\Un{\POS}(\pos{G})$. By Theorem~\ref{Theo:OdotStar}\,(iv), the poset $\pos{R}=\pos{Q}_1\odot\pos{Q}_2$ satisfies $\Cstar$. Let $\upsilon\colon\pos{R}\to\pos{G}$ be defined as follows:
$$
\u(x,y)=\begin{cases}
\upsilon_1(x)&\mbox{ if }y=\bot;\\
\upsilon_2(y)&\mbox{ if }x=\bot;\\
f&\mbox{otherwise}.
\end{cases}
$$
By Theorem~\ref{Theo:OdotStar}(ii), we have 
$\u(\min_{\pos{R}}(x,y))=\{a\}=\min_{\pos{G}}(\u(x,y))$ for each $(x,y)\in \pos{R}$. 
If $(x,y)\leq(x',y')$, then we have three cases:
\begin{itemize} 
\item[(a)] if  $(x',y')\in\pos{Q}_1\times\pos{Q}_2$ then $\u(x',y')=f\geq \u(x,y)$; 
\item[(b)] if $(x',y')\in \pos{Q}_1\times\{\bot\}$, then $(x,y)\in \pos{Q}_1\times\{\bot\}$ and $x \leq_{\pos{Q}_1}x'$, and  \[\u(x,y)=\u_1(x)\leq \u_1(x')=\u(x',y');\] 
\item[(c)] if $(x',y')\in \{\bot\}\times\pos{Q}_2$ the inequality $\u(x,y)\leq\u(x',y')$ follows from a routine variant of the argument used in case (b).
\end{itemize}
This proves that $\u\in\Un{\POS}(\pos{G})$.
By definition of $\u$, it follows that $\iota_{\pos{Q}_i}\colon\pos{Q}_i\to \pos{R}$ satisfies $\u\circ \iota_{\pos{Q}_i}=\u_i$ for each $i\in\{1,2\}$. 
Having thus proved  $\u_1,\u_2\leqn \u$, we  conclude that $\Un{\POS}(\pos{G})$ is directed.

The proof that  $\Un{\POn}(\pos{G})$ is directed follows by a similar construction using Theorem~\ref{Theo:OdotStar}\,(iii) and defining $\pos{R}=(\pos{Q}_1\odot\pos{Q}_2)_n$.
\end{proof}
We will now determine the unification type of each subvariety of~$\PDL$.
\begin{theorem}\label{The:ClasSubvar}
Let $\variety{V}$ be a non-trivial subvariety of $ \PDL$. Then the following holds:
$$
\Type{}(\variety{V})=
\left\{
\begin{tabular}{ll}
$1$&  if $\variety{V}=\PDLZero$; \\
$0$ & otherwise.
\end{tabular}
\right.
$$
\end{theorem}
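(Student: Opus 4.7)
The proof splits into two cases. For $\variety{V}=\PDLZero$ the argument is immediate, and for the remaining cases (the proper subvarieties $\PDLn$ with $n\geq 1$ and the full variety $\PDL$) I will fix a single dual unification problem and apply Lemma~\ref{Lem:DirG} together with Theorem~\ref{Theo:Baader}(ii).

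\textbf{Boolean case.} Each finite (equivalently, finitely presented) non-trivial Boolean algebra $\alg{A}$ is projective in $\PDLZero$, so the identity $\mathrm{id}_{\alg{A}}$ is itself a unifier of $\alg{A}$. Given any other unifier $u\colon\alg{A}\to\alg{P}$ one has $u\circ \mathrm{id}_{\alg{A}}=u$, hence $u\leqn \mathrm{id}_{\alg{A}}$. Therefore $\{\mathrm{id}_{\alg{A}}\}$ is a minimal complete set, $\Type{\PDLZero}(\alg{A})=1$ for every solvable $\alg{A}$, and $\Type{}(\PDLZero)=1$.

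\textbf{Non-Boolean case.} Consider the poset $\pos{G}$ of Fig.~\ref{Fig:G}. Since $\min(\pos{G})=\{a\}$, one has $|\min_{\pos{G}}(x)|=1$ for every $x\in G$, so $\pos{G}\in\POn$ for every $n\in\N$. Consequently $\E(\pos{G})$ is a non-trivial object of $\PDLn$ for every $n\geq 1$, and hence of $\PDL$. By the discussion following Theorem~\ref{Theo:Proj}, it is solvable as a unification problem in each of these varieties. By Lemma~\ref{Lem:DirG}, the preordered classes $\Un{\POn}(\pos{G})$ (for $n\geq 1$) and $\Un{\POS}(\pos{G})$ are directed. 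To apply Theorem~\ref{Theo:Baader}(ii) it therefore suffices to exhibit a strictly $\leqn$-increasing sequence $\u_1\leqn\u_2\leqn\cdots$ in $\Un{\POn}(\pos{G})$ that admits no upper bound; then $\Type{}(\E(\pos{G}))=0$ in every $\PDLn$ with $n\geq 1$ and in $\PDL$, forcing $\Type{}(\variety{V})=0$ for every non-Boolean non-trivial subvariety $\variety{V}$ of $\PDL$.

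\textbf{Construction of the unbounded chain.} The idea is to build unifiers that force progressively longer alternations between the ``left branch'' $\{b,d\}$ and the ``right branch'' $\{c,e\}$ of $\pos{G}$. Concretely, for each $k\in\N$ let $\pos{Q}_k$ be a fence of length $k$ (an alternating sequence of covers $x_0<y_0>x_1<y_1>\cdots$) placed above a single minimum $\bot_k$ which lies below every $x_i$; one verifies that $\pos{Q}_k$ has a unique minimal element and satisfies $\Cstar$ (it is obtainable from elementary pieces via the special product $\odot$ of Section~\ref{Sec:SpecialProd}, so Theorem~\ref{Theo:OdotStar}(iv) applies). Define $\u_k\colon\pos{Q}_k\to\pos{G}$ by sending $\bot_k\mapsto a$, the ``valleys'' $x_i$ to $b$ and $c$ alternately, and the ``peaks'' $y_i$ to the corresponding $d$ or $e$. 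Each $\u_k$ embeds into $\u_{k+1}$ in a canonical way, giving $\u_k\leqn\u_{k+1}$. The crucial point is that no $\u\colon\pos{R}\to\pos{G}$ with $\pos{R}$ finite and satisfying $\Cstar$ (or $\Cnstar$ for $n\geq 1$) can be an upper bound for the whole chain: any morphism witnessing $\u_k\leqn\u$ must lift the $k$-fold alternation of $\pos{Q}_k$ into $\pos{R}$, forcing arbitrarily long $b/c$--$d/e$ zigzags there, which a finite poset cannot accommodate.

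\textbf{Main obstacle.} The paragraphs above that are routine are the Boolean case and the invocation of Lemma~\ref{Lem:DirG}; the substantive work is the construction of the chain and, in particular, the verification that no finite $\Cnstar$-poset can dominate every $\u_k$. This is precisely the subtlety that tripped the original argument in \cite{Gh1997}: the naive coproduct-with-a-new-top fails $\Cstar$, and one must use the $\odot$-construction together with a careful combinatorial argument against would-be upper bounds in $\pos{G}$.
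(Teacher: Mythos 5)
Your overall strategy (Boolean case trivially of type $1$; for every other non-trivial subvariety exhibit the single problem $\pos{G}$, prove directedness via Lemma~\ref{Lem:DirG}, and produce an unbounded increasing chain so that Theorem~\ref{Theo:Baader}(ii) applies) is exactly the paper's strategy, and your Boolean paragraph and your invocation of Lemma~\ref{Lem:DirG} are fine. The gap is in the construction of the chain. A fence $x_0<y_0>x_1<y_1>\cdots$ with a bottom $\bot_k$ adjoined does \emph{not} satisfy $\Cstar$: two non-adjacent valleys $x_0$ and $x_2$ have no common upper bound at all, so their least upper bound fails to exist. Nor is such a fence obtainable from the $\odot$-construction (iterating $\odot$ on one-element posets produces the join-semilattices $\pos{P}(m)$, not fences), so Theorem~\ref{Theo:OdotStar}(iv) does not apply. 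Consequently your $\u_k$ are not elements of $\Un{\POS}(\pos{G})$ or $\Un{\POn}(\pos{G})$ --- this is precisely the same species of error that the paper diagnoses in the original argument of \cite{Gh1997}. The paper instead takes the domains to be the Boolean lattices $\mathcal{P}(m)$ (which have a single minimal element, lie in every $\POn$, and satisfy $\Cnstar$) and defines $\u_m$ by sending singletons to $b$ or $c$ according to parity, suitable doubletons to $d$ or $e$, and everything else to $f$.

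The second, independent, gap is your argument that no upper bound exists. The claim that a finite poset ``cannot accommodate arbitrarily long $b/c$--$d/e$ zigzags'' is false as stated: a zigzag path in a finite poset may revisit elements, so finiteness alone yields nothing. The correct argument must use the hypothesis that the domain $\Y$ of a would-be upper bound $\u$ satisfies $\Cnstar$: if the witnessing morphism $\nu\colon\mathcal{P}(m)\to\Y$ identifies $\nu(\{i\})=\nu(\{j\})$ for $i<j$ of the same parity, one picks $k$ with $i<k<j$ of the opposite parity; since all minimal sets involved are the singleton $\{\nu(\emptyset)\}$, condition $\Cnstar$ guarantees that $x=\nu(\{i\})\vee_{\Y}\nu(\{k\})$ exists, and then $b,c\leq\u(x)\leq d,e$ in $\pos{G}$, contradicting the absence of such a midpoint. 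This forces $\nu$ to be injective on singletons, hence $|\Y|\geq m$, and only then does unboundedness follow. Without an appeal to $\Cnstar$ in the codomain of $\nu$ your proof does not close.
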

\begin{proof}
The variety $\PDLZero$ is the class of Boolean algebras. Since every non-trivial finitely presented Boolean algebra is projective, $\PDLZero$ i has type  $1$. We conclude that $\PDLZero$ has type~$1$.

Combining Lemma~\ref{Lem:DirG} with the argument in  \cite[Theorem 5.9]{Gh1997}, we obtain that $\Type{}(\Un{\POS}(\pos{G}))=0$. Therefore~$\PDL$ has type $0$.

Now let us fix $n\in\N$. We will use the same construction used in \cite{Gh1997} to prove that each~$\PDLn$ has type~$0$.
For each $m\in\N$, let  $\u_m\colon\mathcal{P}(m)\to\pos{G}$ be the map defined by
$$
\u_m(S)=\begin{cases}
a&\mbox{ if } S=\emptyset;\\
b&\mbox{ if } S=\{k\}\mbox{ for some even }1\leq k\leq m;\\
c&\mbox{ if } S=\{\ell\}\mbox{ for some odd }1\leq \ell\leq m;\\
d&\mbox{ if } S=\{k,\ell\}\mbox{ for some }1\leq k<\ell\leq m\\
&\ \ \mbox{ such that }k\mbox{ is even and }\ell\mbox{ is odd};\\
e&\mbox{ if } S=\{k,\ell\}\mbox{ for some }1\leq k<\ell\leq m\\
&\ \ \mbox{ such that }k\mbox{ is odd and }\ell\mbox{ is even};\\
f&\mbox{otherwise}.
\end{cases}
$$

Clearly $\mathcal{P}(m)\in \POn$ and $\mathcal{P}(m)$ satisfy $\Cnstar$ for each $m\in\N$. From the definition above, it follows that $\u_{m}$ is monotone for each $m\in\N$. Since $\pos{G}$ has only one minimal element, it follows that each $\u_{m}$ is a $p$-morphism and therefore in $\Un{\POn}(\pos{G})$. It is easy to observe that for each $m\in \N$ the inclusion map $\varphi_{m}$ from $\mathcal{P}(m)$ into $\mathcal{P}(m+1)$ is such that $\u_{m+1}\circ \varphi_m=\u_m$, hence $\u_m\leqn  \u_{m+1}$.
 
Suppose that $\u\colon\Y\to\pos{G}$ is a $p$-morphism such that $\Y$ satisfies $\Cnstar$ 
and that there exists a morphism $\nu\colon\mathcal{P}(m)\to \Y$ in $\POS_n$ such that $\u \circ \nu=\u_m$.

We claim that for each  $i,j\in\{1,\ldots, m\}$, 
if $\nu(\{i\})=\nu(\{j\})$, then $i=j$.
By way of contradiction assume that $i<j$. Since  $\u_m(\{i\})=\u\circ\nu(\{i\})=\u\circ\nu(\{j\})=\u_m(\{j\})$,  both $i$ and $j$ have the same parity. Let $k$ be  such that $i<k<j$ and the parity of $k$ is different from that of $i$ and $j$. 
Then 
$$
  \{b,c\}=\{\u_m(\{i\}),\u_m(\{k\})\}=\{\u(\nu(\{i\})),\u(\nu(\{k\}))\}$$
and
\[  \{\u(\nu(\{i,k\})),\u(\nu(\{k,j\}))\}=\{\u_m(\{i,k\}),\u_m(\{k,j\})\}
=\{e,d\}.
\]

 Since $\Y$ satisfies $\Cnstar$ 
and 
\begin{align*}
\textstyle|\min_{\Y}(\nu(\{i\}))\cup\min_{\Y}(\nu(\{k\}))|&\textstyle=|\nu(\min_{\mathcal{P}(m)}(\{i\}))\cup \nu(\min_{\mathcal{P}(m)}(\{k\}))|\\
&=|\nu(\emptyset)|=1,
\end{align*}
 the least upper bound $x=\nu(\{i\})\vee_{\Y}\nu(\{k\})$ exists. 
Using the fact that $\nu$, $\u$ and $\u_m$ are order preserving, we have that $b,c\leq\u(x)\leq e,d$.
This contradicts the fact that there does not exist an element $y\in\pos{G}$ such that $b,c\leq y\leq e,d$. 

From this we obtain that
$$
m=|\{\nu(\{i\})\mid i\in\{1,\ldots,m\}\}|\leq|\Y|.
$$ 
Therefore, a common upper bound to the sequence $\u_1\leqn\u_2 \leqn\cdots $ should have an infinite domain.
As a consequence, there does not exist an upper bound in $\Un{\POn}(\pos{G})$ for the sequence $\u_1\leqn\u_2 \leqn\cdots $. 
From Lemma~\ref{Lem:DirG} and Theorem~\ref{Theo:Baader}\,(ii), it follows that the type of  $\Un{\POn}(\pos{G})$ is $0$. Therefore, $\Type{}(\PDLn)=0$.
\end{proof}

\section{Type of unification problems in 
$\mathfrak{B}_1$
}\label{Sec:UnifPDL1}

We already have the machinery to present the classification of the unification problems in $\PDLOne$. This will serve as a warm-up for the analysis of unification types in $\PDL$ and $\PDLn$  in Sections~\ref{Sec:UnifPDL} and~\ref{Sec:UnifPDLn}, respectively. Even though the results in this section are less technically involved  than the ones presented in Sections~\ref{Sec:UnifPDL} and~\ref{Sec:UnifPDLn}, the structure of these sections is similar. Initially, we present some necessary conditions for a poset in $\POS_1$ to have unification type $0$ (Lemma~\ref{Lem:POne0}). Finally, we determine the type of each poset in $\POS_1$ (Theorem~\ref{Theo:MainPone}), depending on its properties, by presenting a minimal  complete set of unifiers, or using Lemma~\ref{Lem:POne0} to see that it has type $0$.

\begin{lemma}\label{Lem:POne0}
Let $\X\in \POS_1$. If there exist $a,b,c,d,x\in X$ such that
\begin{itemize}
\item[(i)] $a,b\leq c,d\leq x$;
\item[(ii)] there is no $e\in X$ such that $a,b\leq e\leq c,d$,
\end{itemize}
then $\Type{}(\Un{\POS_1}(\X))=0$.
\end{lemma}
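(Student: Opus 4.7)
The plan is to adapt the argument of Theorem~\ref{The:ClasSubvar} to the setting of $\POS_1$: establish that $\Un{\POS_1}(\X)$ is directed, then exhibit a strictly increasing chain of unifiers in $\Un{\POS_1}(\X)$ admitting no upper bound, and finally invoke Theorem~\ref{Theo:Baader}(ii). The hypothesised ``missing join'' configuration $a,b\leq c,d\leq x$ with no element in between will play here the role that the poset $\pos{G}$ played in Section~\ref{Sec:UnifType}.

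For directedness, given $\u_1\colon\pos{Q}_1\to\X$ and $\u_2\colon\pos{Q}_2\to\X$ in $\Un{\POS_1}(\X)$, I would take (after a harmless renaming) the disjoint union $\pos{R}=\pos{Q}_1\sqcup\pos{Q}_2$ and let $\u\colon\pos{R}\to\X$ act as $\u_i$ on each summand. Clearly $\pos{R}\in\POS_1$. Any two elements of $\pos{R}$ with a common minimum must sit in the same summand (since the summands have disjoint sets of minima), so condition $(*_1)$ of each $\pos{Q}_i$ transfers to $\pos{R}$; the inclusions $\pos{Q}_i\hookrightarrow\pos{R}$ then witness $\u_1,\u_2\leqn\u$.

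For the chain, I would first observe that $\X\in\POS_1$ together with hypothesis (i) forces $a,b,c,d,x$ to share a single minimum $m_0\in\min(\X)$: one has $\min_\X(a)\cup\min_\X(b)\subseteq\min_\X(x)$, and the right-hand side has at most one element. Then for each $m\in\N$ I would define $\u_m\colon\pow{m}\to\X$ by $\u_m(\emptyset)=m_0$; $\u_m(\{i\})=a$ if $i$ is even and $\u_m(\{i\})=b$ if $i$ is odd; $\u_m(\{i,j\})=c$ for $i<j$ with $i$ even and $j$ odd; $\u_m(\{i,j\})=d$ for $i<j$ with $i$ odd and $j$ even; and $\u_m(S)=x$ in every remaining case. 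Monotonicity follows from $a,b\leq c,d\leq x$, and $\u_m$ is a $p$-morphism because $\pow{m}$ has $\emptyset$ as its unique minimum. The inclusions $\pow{m}\hookrightarrow\pow{m+1}$ witness $\u_m\leqn\u_{m+1}$.

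The core of the proof, and its main obstacle, is showing this chain has no upper bound. Suppose by contradiction that $\u\colon\Y\to\X\in\Un{\POS_1}(\X)$ is an upper bound, witnessed by $p$-morphisms $\nu_m\colon\pow{m}\to\Y$ with $\u\circ\nu_m=\u_m$. I would argue that $\nu_m(\{1\}),\ldots,\nu_m(\{m\})$ are pairwise distinct. Indeed, if $\nu_m(\{i\})=\nu_m(\{j\})$ for some $i<j$, then $\u_m(\{i\})=\u_m(\{j\})$ forces $i$ and $j$ to have equal parity, so I can pick $k$ of opposite parity with $i<k<j$. Since $\nu_m$ is a $p$-morphism, both $\nu_m(\{i\})$ and $\nu_m(\{k\})$ have the same unique minimum $\nu_m(\emptyset)$ below them in $\Y$, so condition $(*_1)$ in $\Y$ produces their join $y$. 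Using $\nu_m(\{j\})=\nu_m(\{i\})$ and monotonicity, one obtains $y\leq\nu_m(\{i,k\})$ and $y\leq\nu_m(\{k,j\})$, so after applying $\u$ one finds $a,b\leq\u(y)\leq c,d$, contradicting (ii). Hence $|\Y|\geq m$ for every $m\in\N$, impossible for a finite poset $\Y$. With directedness in hand, Theorem~\ref{Theo:Baader}(ii) then delivers $\Type{}(\Un{\POS_1}(\X))=0$.
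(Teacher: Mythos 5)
Your proof is correct and follows essentially the same route as the paper's: directedness of $\Un{\POS_1}(\X)$ via the disjoint union (which is exactly what $(\pos{Q}_1\odot\pos{Q}_2)_1$ amounts to in $\POS_1$), the identification of $\{m_0,a,b,c,d,x\}$ with the poset $\pos{G}$, the same parity-based chain $\u_m\colon\mathcal{P}(m)\to\X$, the same ``singleton images are pairwise distinct, hence $|\Y|\geq m$'' argument, and Theorem~\ref{Theo:Baader}(ii). The only difference is that you spell out details the paper delegates to Theorem~\ref{The:ClasSubvar}.
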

\begin{proof}
By Theorem~\ref{Theo:OdotStar}(iii), if $\Y_1$ and $\Y_2$ are in $\POS_1$ and satisfy $(*_1)$, then $(\Y_1\odot\Y_2)_1$ satisfies $(*_1)$. Observe that $(\Y_1\odot\Y_2)_1$ is isomorphic in $\POS_1$ to the disjoint union of $\Y_1$ and $\Y_2$. It follows that the class $\Un{\POS_1}(\X)$ is directed.

Since $\X\in \POS_1$, there exists $y\in X$ such that $\min_{\X}(x)=\{y\}$. Condition (ii) implies that $|\{a,b, c,d\}|=4$. Then from condition (i) it follows that the subposet $\Z=(\{y,a,b,c,d,x\},\leq)$ of $\X$ is isomorphic to the poset $\pos{G}$ shown in Fig.~\ref{Fig:G}.
Accordingly, we are in position to define a sequence of unifiers $\u_m\colon\mathcal{P}(m)\to  \X$ with image contained in $\Z$ as in Theorem~\ref{The:ClasSubvar}. Applying the same arguments used in the proof of Theorem~\ref{The:ClasSubvar}, we can show that if $\u\colon\Y\to \X$ is such that $\u_m\leqn\u$, then $m\leq|Y|$. 

Now, an application of  Theorem~\ref{Theo:Baader}\,(ii) proves that the type of $\Un{\POS_1}(\X)$ is $0$.
\end{proof}
\begin{theorem}\label{Theo:MainPone}
Let $\X$ be a non-empty poset in $\POS_1$. Then 
$$
\Type{}(\Un{\POS_1}(\X))=\begin{cases}
1& \mbox{ if } \DwSet{x} \mbox{ is a lattice for each } x\in \X;\\
0 & \mbox{otherwise}.
\end{cases}
$$
\end{theorem}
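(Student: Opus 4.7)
The plan is to split on whether every principal down-set $\DwSet{x}$ of $\X$ is a lattice; I expect type $0$ in the negative case and type $1$ in the positive one.

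In the negative case, I aim to verify the hypothesis of Lemma~\ref{Lem:POne0}. Since $\DwSet{x}$ is finite with top element $x$, it fails to be a lattice exactly when some pair $c,d \in \DwSet{x}$ has no meet: the set $L = \DwSet{c}\cap\DwSet{d}$ of their common lower bounds has no greatest element, and so contains at least two distinct maximal elements $a,b$. Then $a,b \leq c,d \leq x$, and any putative $e \in X$ with $a,b \leq e \leq c,d$ would lie in $L$ and dominate both maximal elements $a,b$ of $L$, forcing $a = e = b$, a contradiction. Lemma~\ref{Lem:POne0} then yields $\Type{}(\Un{\POS_1}(\X))=0$.

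In the positive case, I plan to exhibit a maximum unifier $\nu\colon \Z \to \X$, so that $\{\nu\}$ is a minimal complete set. Let $x_1,\dots,x_k$ be the maximal elements of $\X$ and let $\Z$ be the disjoint union of the posets $\DwSet{x_1},\dots,\DwSet{x_k}$ (each a finite lattice), with $\nu$ the map that forgets the copy index. Each $\DwSet{x_i}$ has a unique minimum, so $\Z \in \POS_1$ and $(*_1)$ holds componentwise; and because $\X \in \POS_1$, the minimum of $\DwSet{x_i}$ is the unique minimum of $\X$ below $x_i$, which is what makes $\nu$ a $p$-morphism.

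For maximality, given any unifier $\upsilon\colon \Y \to \X$, I would decompose $\Y$ as $\bigsqcup_{m \in \min(\Y)} \UpSet{m}$ (coherent because $\Y \in \POS_1$, so elements from different $\UpSet{m}$'s are order-incomparable). Condition $(*_1)$ turns each $\UpSet{m}$ into a finite join-semilattice with bottom $m$, hence with a top element $t_m$. Consequently $\upsilon(\UpSet{m}) \subseteq \DwSet{\upsilon(t_m)} \subseteq \DwSet{x_{i(m)}}$ for any maximal $x_{i(m)} \geq \upsilon(t_m)$ in $\X$. Sending $y \in \UpSet{m}$ to $\upsilon(y)$ viewed in the $\DwSet{x_{i(m)}}$-copy of $\Z$ defines the required $p$-morphism $\psi\colon \Y \to \Z$ with $\nu\circ\psi=\upsilon$; monotonicity and min-preservation are routine. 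The main obstacle is precisely the observation that $(*_1)$ forces each component $\UpSet{m}$ of $\Y$ to have a top, for without it one could not squeeze the whole image $\upsilon(\UpSet{m})$ inside a single lattice $\DwSet{x_{i(m)}}$, and so could not coherently choose, for each $y$, a copy of $\upsilon(y)$ in $\Z$ that respects order.
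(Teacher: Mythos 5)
Your proposal is correct and takes essentially the same route as the paper: the negative case reduces to Lemma~\ref{Lem:POne0} by extracting $a,b$ as distinct maximal common lower bounds of a meetless pair $c,d\leq x$ (the paper extracts the same four-element configuration), and the positive case exhibits the forgetful map from a disjoint union of principal down-sets as a single most general unifier, using that $(*_1)$ forces each component of a unifier's domain to have a top element. The only cosmetic difference is that your coproduct is indexed by the maximal elements of $\X$ whereas the paper's poset $\pos{R}$ is indexed by all of $X$; both constructions work equally well.
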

\begin{proof}
Assume first, that for each $x\in X$ the set $\DwSet{x}$ with the inherited order is a lattice.
Consider the set $R=\{(x,y)\mid x\in X\mbox{ and }y\leq x\}$ ordered by $(x,y)\leq(x',y')$ if $x=x'$  and $y\leq y'$.
It follows that $\min_{\pos{R}}(x,y)=\{(x,m_x)\}$ for any $(x,y)\in R$,  where $m_x$ is the unique element in $\min_{\X}(x)$.
If $(x,y),(x',y')\in R$ are such that  $|\min_{\pos{R}}(x,y)\cup\min_{\pos{R}}(x',y')|=1$, then $x=x'$, so $y,y'\leq x$.
 By assumption, there exists $y\vee_{\DwSet{x}}y'$ and $\min_{\X}(y\vee_{\DwSet{x}}y')=\min_{\X}(x)=\{m_x\}$. Then the element $(x,y\vee_{\DwSet{x}}y')$ is the least  upper bound of $(x,y)$ and $(x',y')$ in $\pos{R}$ and satisfies $\min_{\pos{R}}(x,y\vee_{\DwSet{x}}y')=\{(x,m_x)\}=\min_{\pos{R}}(x,y)\cup\min_{\pos{R}}(x,y')$. This proves that $\pos{R}$ satisfies $(*_1)$.

Let the map
$\eta\colon \pos{R}\to \X$ be defined by $\eta(x,y)=y$. Then $\eta\in\Un{\smash {\POS_1}}(\X)$. 
We claim that $\{\eta\}$ is a minimal complete set for $\Un{\POS_1}(\X)$.
Since $\Y$ is in $\POS_1$ and satisfies $(*_1)$ for each $y\in\Y$, there exists a unique $M_y\in\max(\Y)$ such that $y\leq M_y$. Indeed, since $\Y\in\POS_1$, if $z,z'\in \Y$ are such that $y\leq z,z'$, then $\min_{\Y}(z)=\min_{\Y}(z')=\min_{\Y}(y)$ and $|\min_{\Y}(z\cup z')|=1$. Now from $(*_1)$, there exists $z\vee_{\Y}z'$. Therefore $\UpSet{y}$ is a finite join-semilattice, hence it has a maximal element $M_y$.
Let $\u\colon\Y\to \X\in\Un{\POS_1}(\X)$. Let $\nu\colon\Y\to\pos{R}$ be defined by $\nu(y)=(\u(M_y),\u(y))$. 
It is not hard to see that $\eta\circ\nu=\u$ and that $\nu$ is a morphism in $\POS_1$.
 Then $\u\leqn \eta$, which proves $\Type{\POS_1}(\X)=1$.

Suppose now that for some $x\in\X$ the set $\DwSet{x}$ with the inherited order from $X$ is not a lattice. That is, there exist two elements in $\DwSet{x}$ that do not have a least upper bound or a greatest lower bound. In each of these cases, there are $a,b,c,d\in\DwSet{x}$ such that 
\begin{itemize}
\item[(i)] $a,b\leq c,d$;
\item[(ii)] there is no $e\in X$ such that $a,b\leq e\leq c,d$.
\end{itemize}
From Lemma~\ref{Lem:POne0}, it follows that $\Type{}(\Un{\POS_1}(\X))=0$.
\end{proof}

\section{Connected sets}\label{Sec:Unifcore}

In this section we introduce two key notions: connected set and $n$-connected set. These concepts will play a central role in our description of the unification types in Sections~\ref{Sec:UnifPDL} and~\ref{Sec:UnifPDLn}.

\begin{definition}\label{Def:Con}
Let $\X=(X,\leq)\in\POS$ and $Y\subseteq X$. We say that $Y$ is 
\emph{connected} if it satisfies
\begin{itemize}
\item[(i)] $\min_{\X}(Y)\subseteq Y$;
\item[(ii)] for each $x,y\in Y$ there exists $z\in Y$ such that $x,y\leq z$ and \[\textstyle\min_{\X}(x)\cup\min_{\X}(y)=\min_{\X}(z).\]
\end{itemize}
\end{definition}
Let $\Con(\X)$ denote the poset of connected subsets of $\X$ ordered by inclusion. 
Observe that (i) in Definition~\ref{Def:Con} implies that $\min_{\Y}(y)=\min_{\X}(y)$, for each $Y\in\Con(\X)$ and each $y\in Y$.

For later use in Theorem~\ref{Theo:ClassPDL}, we collect here some properties of $\Con(\X)$. The first follows directly from the definition.
\begin{lemma}\label{Lem:ExtCon}
Let $\X\in\POS$ and $Y\in \Con(\X)$. If $x\in X$ and $y\in Y$ satisfy $x\leq y$ and $\min_{\X}(x)=\min_{\X}(y)$, then $Y\cup\{x\}\in\Con(\X)$.
\end{lemma}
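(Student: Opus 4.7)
The plan is to verify directly the two clauses of Definition~\ref{Def:Con} for the set $Y\cup\{x\}$, using the hypotheses $x\leq y$ and $\min_{\X}(x)=\min_{\X}(y)$ to reduce everything to the corresponding statements for $Y$.

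For clause (i), I would note that $\min_{\X}(Y\cup\{x\})=\min_{\X}(Y)\cup\min_{\X}(x)$. The hypothesis $\min_{\X}(x)=\min_{\X}(y)$ with $y\in Y$ gives $\min_{\X}(x)\subseteq\min_{\X}(Y)$, and since $Y$ is connected, $\min_{\X}(Y)\subseteq Y$. So $\min_{\X}(Y\cup\{x\})\subseteq Y\subseteq Y\cup\{x\}$.

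For clause (ii), I would split on cases according to whether the two chosen elements lie in $Y$ or equal $x$. If both lie in $Y$, the witness is supplied by connectedness of $Y$. If both equal $x$, take $z=x$. The only substantive case is $a=x$ and $b\in Y$: apply connectedness of $Y$ to the pair $y,b\in Y$ to obtain $z\in Y$ with $y,b\leq z$ and $\min_{\X}(y)\cup\min_{\X}(b)=\min_{\X}(z)$; then $x\leq y\leq z$ gives $x\leq z$, and $\min_{\X}(x)\cup\min_{\X}(b)=\min_{\X}(y)\cup\min_{\X}(b)=\min_{\X}(z)$, so $z\in Y\cup\{x\}$ witnesses the required property.

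There is no real obstacle here; the argument is essentially bookkeeping, and the lemma is designed to package the observation that adjoining an element beneath an element of $Y$ with the same set of minimal predecessors cannot disturb either closure property. The only thing to be careful about is the convention for $\min_{\X}(Y)$ as the union $\bigcup_{y\in Y}\min_{\X}(y)$, which is implicit in Definition~\ref{Def:Con}(i).
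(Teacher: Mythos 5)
Your proof is correct and matches the paper's approach: the paper simply asserts that the lemma ``follows directly from the definition,'' and your case analysis is exactly the routine verification of clauses (i) and (ii) of Definition~\ref{Def:Con} that this assertion implicitly relies on. The only substantive case, $a=x$ and $b\in Y$, is handled correctly by passing to the pair $y,b$ and using $\min_{\X}(x)=\min_{\X}(y)$ together with $x\leq y$.
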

\begin{lemma}\label{Lem:RangCon}
Let $\X,\Y\in\POS$ and $\u\colon \X\to \Y$ be a $p$-morphism. If $\X$ satisfies $\Cstar$  then
  $\u(\X)\subseteq\Y$ is connected.
\end{lemma}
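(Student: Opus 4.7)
The plan is to verify the two conditions of Definition~\ref{Def:Con} for the set $\u(X)$ by a direct computation, using only that $\u$ is a $p$-morphism and that $\X$ satisfies $\Cstar$.

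For condition (i), I would take an arbitrary element of $\min_{\Y}(\u(X))$, i.e.\ some $m\in\min_{\Y}(\u(x))$ for a chosen $x\in X$. The defining property of $p$-morphisms gives $\min_{\Y}(\u(x))=\u(\min_{\X}(x))$, so $m=\u(m')$ for some $m'\in\min_{\X}(x)\subseteq X$. Hence $m\in\u(X)$, showing $\min_{\Y}(\u(X))\subseteq\u(X)$.

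For condition (ii), I would take $a,b\in\u(X)$ and pick preimages $x,y\in X$ with $\u(x)=a$ and $\u(y)=b$. By $\Cstar$ applied to $\X$, the join $z=x\vee_{\X}y$ exists and satisfies $\min_{\X}(z)=\min_{\X}(x)\cup\min_{\X}(y)$. Setting $c=\u(z)\in\u(X)$, monotonicity of $\u$ gives $a,b\leq c$, and the $p$-morphism property combined with the preceding identity yields
\[
\min_{\Y}(c)=\u(\min_{\X}(z))=\u(\min_{\X}(x))\cup\u(\min_{\X}(y))=\min_{\Y}(a)\cup\min_{\Y}(b),
\]
which is exactly what is required.

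There is no real obstacle here: once the definitions are unfolded, both conditions reduce to the two defining equations $\u(\min_{\X}(x))=\min_{\Y}(\u(x))$ and $\min_{\X}(x\vee_{\X}y)=\min_{\X}(x)\cup\min_{\X}(y)$ applied once each. The only mild subtlety is remembering that $\min_{\Y}(y)=\min_{\Y}(y)\cap\u(X)$ is not being claimed — one checks membership in $\u(X)$ only for the witness $c$, not for the minimal elements; membership of those in $\u(X)$ is precisely what (i) has just established.
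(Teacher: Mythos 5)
Your proof is correct and follows essentially the same route as the paper's: condition (i) comes straight from the $p$-morphism identity $\u(\min_{\X}(x))=\min_{\Y}(\u(x))$, and condition (ii) from lifting $a,b$ to preimages, forming the join guaranteed by \Cstar, and pushing the resulting identity for $\min$ forward through $\u$. No differences worth noting.
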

\begin{proof}
Since  $\u$ is a $p$-morphism, it follows that $\u(\X)$ satisfies (i) in Definition~\ref{Def:Con}.
Let $x,y\in \u(X)$ and $x',y'\in X$ be such that $\u(x')=x$ and $\u(y')=y$. Then \begin{align*}
\textstyle\min_{\Y}(\u(x'\vee_{\X}y'))&\textstyle=\u(\min_{\X}(x'\vee_{\X} y' ))=\u(\min_{\X}(x')\cup\min_{\X}( y' ))\\
&\textstyle=\u(\min_{\X}(x'))\cup\u(\min_{\X}( y' ))=\min_{\Y}(x)\cup\min_{\Y}(y).
\end{align*}
This proves that $\u(X)$ satisfies item (ii) of Definition~\ref{Def:Con}.
\end{proof}

The poset of connected subsets will be used in our description of the unification type of a poset in $\POS$ (Theorem~\ref{Theo:ClassPDL}). 
To study the unification type in $\POn$ (Section~\ref{Sec:UnifPDLn}) we need the slightly more sophisticated notion of $n$-connected set. 

\begin{definition}\label{Def:n-con}
Let $\X=(X,\leq)\in\POn$ and $Y\subseteq X$. We say that $Y$ is 
\emph{$n$-connected} if it satisfies
\begin{itemize}
\item[(i)] $\min_{\X}(Y)\subseteq Y$;
\item[(ii)] for each $S\subseteq \min_{\X}(Y)$ such that $|S|\leq n$, there exists $z\in Y$ satisfying $\min_{\X}(z)=S$;
\item[(iii)] if $x,y\in Y$ satisfy $\min_{\X}(x)=\min_{\X}(y)$ and $|\min_{\X}(x)|< n$, then there exists a sequence $x_0,x_1,\dots,x_{r-1},x_{r}\in Y$ such that $$x=x_0\geq x_1\leq x_2\geq x_3\leq\ \ldots\ \geq x_{r-1}\leq x_{r}=y$$
and $\min_{\X}(x_i)=\min_{\X}(x)$ for each $0\leq i\leq r$.
\end{itemize}
\end{definition}
Let  $\Cn(\X)$ denote the poset of $n$-connected subsets of $\X$ ordered by inclusion. It is easy to observe that $\Con(\X)\subseteq\Cn(\X)$ for each $\X\in\POn$. 

 We now collect some properties of $n$-connected sets.
The proof of the first lemma follows directly from the definition of $n$-connected set.

\begin{lemma}\label{Lem:UpSetNCon}
Let $\X$ be a poset in $\POn$ and $Y\in\Cn(\X)$. If $x\in X$ and $y\in Y$ satisfy $\min_{\X}(x)=\min_{\X}(y)$ and ($x\leq y$ or $y\leq x$), then $Y\cup\{x\}\in\Cn(\X)$.
\end{lemma}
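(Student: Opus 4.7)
The plan is to verify conditions (i)--(iii) of Definition~\ref{Def:n-con} for $Y\cup\{x\}$. The key preliminary observation is that $\min_{\X}(x)=\min_{\X}(y)\subseteq Y$, the inclusion coming from condition~(i) applied to $Y$; consequently $\min_{\X}(Y\cup\{x\})=\min_{\X}(Y)$, so adjoining $x$ alters neither the set of minimal elements nor their size-bounded subsets.

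Conditions~(i) and~(ii) follow immediately from this observation: $\min_{\X}(Y\cup\{x\})=\min_{\X}(Y)\subseteq Y\subseteq Y\cup\{x\}$ gives~(i), and any $S\subseteq\min_{\X}(Y\cup\{x\})$ with $|S|\leq n$ is witnessed by some $z\in Y\subseteq Y\cup\{x\}$ produced by~(ii) for~$Y$.

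The main content is condition~(iii). Suppose $u,v\in Y\cup\{x\}$ satisfy $\min_{\X}(u)=\min_{\X}(v)$ and $|\min_{\X}(u)|<n$. If both $u,v\in Y$, I invoke~(iii) for $Y$ directly; the case $u=v=x$ is trivial. The essential case is $u=x$, $v\in Y$ (the symmetric case $u\in Y$, $v=x$ is handled by reversing the zigzag). Here I first build a length-two zigzag from $x$ to $y$ inside $Y\cup\{x\}$: if $x\leq y$, take $x\geq x\leq y$; if $y\leq x$, take $x\geq y\leq y$. Each such sequence matches the required alternation (ending with $\leq$), and every term shares $\min_{\X}$-image $\min_{\X}(x)$. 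I then concatenate with the zigzag $y=y_0\geq y_1\leq\cdots\leq y_s=v$ inside $Y$ supplied by~(iii) for $Y$ (available because $\min_{\X}(y)=\min_{\X}(x)=\min_{\X}(v)$ and $|\min_{\X}(y)|<n$); since the connector ends with $\leq$ at $y=y_0$ and the $Y$-zigzag begins $y_0\geq y_1\leq\cdots$, the alternating pattern continues without interruption, yielding a valid zigzag of length $s+2$ in $Y\cup\{x\}$.

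No serious obstacle is anticipated: the only subtlety is lining up the starting direction and parity of the concatenated zigzag, but the device of repeating $x$ or $y$ as the middle term of the length-two connector handles both $x\leq y$ and $y\leq x$ uniformly and matches the $\geq,\leq$ alternation prescribed in Definition~\ref{Def:n-con}(iii).
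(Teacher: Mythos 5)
Your proof is correct and takes the same route the paper intends: the paper simply asserts that the lemma ``follows directly from the definition of $n$-connected set,'' and your verification of conditions (i)--(iii) --- in particular the observation that $\min_{\X}(Y\cup\{x\})=\min_{\X}(Y)$ and the two-step connector $x\geq x\leq y$ (or $x\geq y\leq y$) spliced onto the zigzag supplied by condition (iii) for $Y$ --- is exactly that direct check, with the parity of the concatenated sequence handled correctly.
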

\begin{lemma}\label{Lem:RangN-Con}
Let $\X,\Y\in\POn$.  Let $\u\colon \X\to \Y$ be a $p$-morphism. If $\X$ satisfies $\Cnstar$  then
  $\u(\X)\subseteq\Y$ is $n$-connected.
\end{lemma}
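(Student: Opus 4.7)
The plan is to verify the three clauses of Definition~\ref{Def:n-con} for the set $\u(X)\subseteq Y$; clauses (i) and (ii) follow quickly from the $p$-morphism property combined with $\Cnstar$, so the real work lies in clause (iii).

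For (i), the $p$-morphism identity $\min_{\Y}(\u(x))=\u(\min_{\X}(x))$ applied for every $x\in X$ yields $\min_{\Y}(\u(X))=\u(\min(\X))\subseteq \u(X)$. For (ii), given $S\subseteq \min_{\Y}(\u(X))=\u(\min(\X))$ with $|S|\leq n$, I would pick a preimage $p_s\in \min(\X)$ of each $s\in S$ and set $T=\{p_s\mid s\in S\}$, so that $|T|=|S|\leq n$; iterating $\Cnstar$ along any enumeration of $T$ produces $z=\bigvee_{\X}T\in X$ with $\min_{\X}(z)=T$, whence $\u(z)\in\u(X)$ and $\min_{\Y}(\u(z))=\u(T)=S$.

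For (iii), let $x=\u(a)$ and $y=\u(b)$ with $\min_{\Y}(x)=\min_{\Y}(y)=S$ and $|S|<n$. My strategy is to construct the zigzag upstairs in $\X$ and then project through $\u$. I would first reduce to \emph{small} lifts: selecting a transversal $T_a\subseteq\min_{\X}(a)$ (one $\u$-preimage of each $s\in S$) gives $|T_a|=|S|<n$; $\Cnstar$ then produces $a'=\bigvee_{\X}T_a$ with $\min_{\X}(a')=T_a$ and $a'\leq a$, so $\u(a')\leq x$ and $\min_{\Y}(\u(a'))=S$. Carrying out the same construction for $b$ yields $b'\leq b$ with transversal $T_b$ and $\u(b')\leq y$, $\min_{\Y}(\u(b'))=S$.

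It then suffices to zigzag from $a'$ to $b'$ in $\X$ through elements whose $\u$-images have $\min_\Y$ equal to $S$. Any two transversals of $S$ can be converted into each other by a sequence of \emph{atomic swaps}, each replacing one preimage $p_s$ by another preimage $p'_s$ of the same $s$. To realise a swap from $T$ to $T'=(T\setminus\{p_s\})\cup\{p'_s\}$ in $\X$, note that $|T\cup\{p'_s\}|\leq|S|+1\leq n$, so by $\Cnstar$ the join $c=\bigvee_{\X}(T\cup\{p'_s\})$ exists and satisfies $\min_{\X}(c)=T\cup\{p'_s\}$; hence $c\geq \bigvee_{\X}T$ and $c\geq \bigvee_{\X}T'$, while $\u(c)\in\u(X)$ has $\min_{\Y}(\u(c))=S$. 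Projecting each such swap through $\u$ and gluing the resulting fragments together with the initial descent $x\geq \u(a')$ and the final ascent $\u(b')\leq y$ produces the required zigzag in $\u(X)$. The critical—and only—obstacle is the swap bound $|S|+1\leq n$: it is exactly the strict inequality $|S|<n$ in the hypothesis of (iii) that grants the single extra unit of $\Cnstar$-capacity needed to perform each atomic swap.
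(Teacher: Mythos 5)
Your proof is correct and follows essentially the same route as the paper: the paper's elements $y_l=\bigvee_{\X}\{s_i,t_j\mid i\geq l>j\}$ and $z_l=\bigvee_{\X}\{s_i,t_j\mid i\geq l\geq j\}$ are exactly the joins of your successive transversals and of the ``transversal plus one extra preimage'' used in each atomic swap, with the strict bound $k<n$ supplying the extra unit of $\Cnstar$-capacity in both arguments. The only difference is presentational: you describe the zigzag as a sequence of swaps between transversals, while the paper writes the same chain out with explicit index formulas.
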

\begin{proof}
Since $\u$ is a $p$-morphism, it follows that $\u(\X)$ satisfies Definition~\ref{Def:n-con}(i).  Condition (ii) follows directly from the fact that $\X$ satisfies $\Cnstar$.

To prove item (iii), let $x,y\in \u(X)$ be such that $\min_{\X}(x)=\min_{\X}(y)$ and both sets have cardinality $k< n$. 
There exist $x',y'\in X$ such that $\u(x')=x$ and $\u(y')=y$. Since $\u$ commutes with $\min$, there exist $S\subseteq \min_{\X}(x')$ and $T\subseteq\min_{\X}(y')$ such that $|S|=|T|=k<n$, $\u(S)=\min_{\Y}(x)=\min_{\Y}(y)=\u(T)$.

Let $s_1,\ldots s_k$ and $t_1,\ldots,t_k$ be enumerations of $S$ and $T$, respectively, such that $\u(s_i)=\u(t_i)$ for each $1\leq i \leq k$.
For each $l\in\{1,\ldots,k\}$, the elements $$\textstyle y_{l}=\bigvee_{\X}\{s_i,t_j\mid i\geq l > j\}\mbox{ and }z_{l}=\bigvee_{\X}\{s_i,t_j\mid i\geq l \geq j\}$$ 
 are well-defined, since $|\{s_i,t_j\mid i\geq l > j\}|\leq|\{s_i,t_j\mid i\geq l \geq j\}|\leq k+1\leq n$ and $\X$ satisfies $\Cnstar$. 
Let also $y_{k+1}$ be equal to $\bigvee_{\X}\{t_j\mid  j\in\{1,\ldots,k\}\}$.
Then for each $l\in\{1,\ldots,k\}$, we have  $y_l\leq z_{l}\geq y_{l+1}$. Thus
$$
x\geq y_1\leq z_1\geq y_2\leq z_2 \geq \ \ldots\ \geq y_{k+1}\leq y 
$$
and applying $\u$, we obtain the sequence
$$
\u(x)\geq\u(y_1)\leq  \u(z_1)\geq \u(y_2)\leq \u(z_2) \ldots \u(z_k)\geq\u(y_{k+1})\leq \u(y).$$ 
Since $\u(s_i)=\u(t_i)$ for each $1\leq i \leq k$ and $\u$ is a $p$-morphism, 
\[
\textstyle\min_{\Y}(\u(y_i))=\min_{\Y}(\u(z_i))=
\min_{\Y}(\u(y_{k+1}))=\min_{\Y}(x)=\min_{\Y}(y)\] for each $i\in\{1.\ldots,k\}$. 
This finishes the proof that $\u(X)$ satisfies condition (iii) of Definition~\ref{Def:n-con}.
\end{proof}

\section{Type of unification problems in~$\mathfrak{B}_{\omega}$
}\label{Sec:UnifPDL}
In Theorem~\ref{Theo:ClassPDL},  we present a description of the type of unification problems in~$\PDL$. As in Section~\ref{Sec:UnifPDL1}, using the duality between finite $p$-lattices and finite posets, the result is presented in terms of  unification type of finite posets. 

This section is structured as follows. In Lemma~\ref{Lem:PDL}, we prove that given $\X\in\POS$, if is there exists a maximal connected subset $Y$ of $\X$ that does not satisfy $\Cstar$, then the type of $\Un{\POS}(\X)$ is $0$. The proof of this lemma splits into three cases  which are developed separately in Lemmas~\ref{Lem:PDLA}, \ref{Lem:PDLB} and \ref{Lem:PDLC}. Finally in Theorem~\ref{Theo:ClassPDL}, we give the unification type of each poset in $\POS$.

\begin{lemma}\label{Lem:PDLA}
Let $\X$ be a finite poset. Assume there is $Y\in\max(\Con(\X))$,  $a,b,c,d\in Y$ and $n\in \N$ satisfying the following conditions
\begin{itemize}
\item[(i)] $\Y$ satisfies $(*_{n-1})$;
\item[(ii)] $|\min_{\Y}(a)\cup\min_{\Y}(b)|=n;$
\item[(iii)] $\min_{\Y}(a)\not\subseteq\min_{\Y}(b)$ and $\min_{\Y}(b)\not\subseteq\min_{\Y}(a)$;
\item[(iv)] $\min_{\Y}(c)=\min_{\Y}(d)=\min_{\Y}(a)\cup\min_{\Y}(b)$;
\item[(v)] $a,b\leq c,d$;
\item[(vi)] there is no $e\in Y$ such that $a,b\leq e\leq c,d$.
\end{itemize}
Then $\Type{}(\Un{\POS}(\X))=0$.
\end{lemma}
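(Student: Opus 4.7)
The plan is to apply Theorem~\ref{Theo:Baader}(ii): I will produce an increasing sequence of unifiers with no upper bound in the (directed) class $\Un{\POS}(\X)$. The construction closely mirrors the one used in Theorem~\ref{The:ClasSubvar}, but with the pair $(c,d)$ above $(a,b)$ playing the role that $(d,e)$ above $(b,c)$ plays in $\pos{G}$.

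Using (iii), choose $s^*\in\min_\Y(a)\setminus\min_\Y(b)$ and $t^*\in\min_\Y(b)\setminus\min_\Y(a)$, and set $C=(\min_\Y(a)\cup\min_\Y(b))\setminus\{s^*,t^*\}$, so $|C|=n-2$ by (ii). For each $m\in\N$, take $\pos{R}_m$ to be the poset of non-empty subsets of $M_m=\{\sigma_1,\ldots,\sigma_m,\tau_1,\ldots,\tau_m\}\cup C$ under inclusion, so $\pos{R}_m\cong\pos{P}(2m+n-2)$ lies in $\POS$ and satisfies $\Cstar$. Define $\u_m\colon \pos{R}_m\to\X$ with image in $Y$ by: $\u_m(\{\sigma_i\})=s^*$, $\u_m(\{\tau_j\})=t^*$, $\u_m(\{c\})=c$ for $c\in C$; for $X\in\pos{R}_m$, let $\pi(X)$ be the subset of $\min_\Y(a)\cup\min_\Y(b)$ consisting of $s^*$ iff some $\sigma_i\in X$, $t^*$ iff some $\tau_j\in X$, together with $X\cap C$. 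If $|\pi(X)|\leq n-1$, set $\u_m(X)=\bigvee_\Y\pi(X)$ (which exists by hypothesis (i)), overriding this with $a$ (resp.\ $b$) when $\pi(X)=\min_\Y(a)$ (resp.\ $\min_\Y(b)$). If $\pi(X)=\min_\Y(a)\cup\min_\Y(b)$ and $X=\{\sigma_i,\tau_j\}\cup C$, put $\u_m(X)=c$ when $i\leq j$ and $\u_m(X)=d$ otherwise; in the remaining case ($\pi(X)=\min_\Y(a)\cup\min_\Y(b)$ with several $\sigma$- or $\tau$-indices), map to a fixed upper bound of $c$ and $d$ in $Y$ with minimum $\min_\Y(a)\cup\min_\Y(b)$, which exists by the connectedness of $Y$. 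Checking cases gives that $\u_m$ is a $p$-morphism; the inclusion $\pos{R}_m\hookrightarrow\pos{R}_{m+1}$ provides $\u_m\leqn\u_{m+1}$.

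For directedness of $\Un{\POS}(\X)$, given $\u\colon\Q\to\X$ and $\u'\colon\Q'\to\X$, the special product $\Q\odot\Q'$ satisfies $\Cstar$ by Theorem~\ref{Theo:OdotStar}(iv), and a common upper bound is obtained from $\u\odot\u'$ (Theorem~\ref{Theo:Coproduct}) by specifying the value at points $(x,y)$ with both coordinates non-$\bot$ as a common upper bound in a maximal connected set containing $\u(\Q)\cup\u'(\Q')$ (using Lemma~\ref{Lem:RangCon} and connectedness of $Y$), exactly as in Lemma~\ref{Lem:DirG}.

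The key step is showing no upper bound exists: suppose $\u\colon\Q\to\X$ satisfies $\u_m\leqn\u$ via $\nu_m\colon\pos{R}_m\to\Q$, so that $\u\circ\nu_m=\u_m$. I would argue that the elements $\nu_m(\{\sigma_1\}),\ldots,\nu_m(\{\sigma_m\})$ are pairwise distinct, forcing $|Q|\geq m$ and precluding a common upper bound. Suppose on the contrary that $\nu_m(\{\sigma_i\})=\nu_m(\{\sigma_j\})$ for some $i<j$, and pick $k$ with $i<k\leq j$. Then $\u(\nu_m(\{\sigma_i,\tau_k\}\cup C))=c$ (since $i\leq k$) while $\u(\nu_m(\{\sigma_j,\tau_k\}\cup C))=d$ (since $j>k$ if $k<j$; if $k=j$ adjust the parity convention). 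Using $\Cstar$ in $\Q$ to form the join of $\nu_m(\{\sigma_i\})=\nu_m(\{\sigma_j\})$ and $\nu_m(\{\tau_k\}\cup C)$, one obtains an element whose $\u$-image is in $Y$, lies above $a,b$, lies below both $c$ and $d$, and has the required minimum set --- contradicting (vi). Theorem~\ref{Theo:Baader}(ii) then yields $\Type{}(\Un{\POS}(\X))=0$. The main obstacle I expect is verifying the $p$-morphism property of $\u_m$ uniformly across the case distinctions on $\pi(X)$, and pinning down the contradictory element $e$ in the final step so that its image lies strictly between $\{a,b\}$ and $\{c,d\}$ rather than strictly below $c,d$ (which would be permitted in $Y$).
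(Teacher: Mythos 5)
Your sequence of unifiers $\u_m$ and the pigeonhole argument forcing $|Q|\geq m$ follow the same pattern as the paper's proof, but there is a genuine gap at the point where you invoke Theorem~\ref{Theo:Baader}(ii): the class $\Un{\POS}(\X)$ is in general \emph{not} directed under the hypotheses of the lemma. By Lemma~\ref{Lem:RangCon} the image of any unifier is a connected subset of $\X$, and if $\u\leqn\nu$ then $\u(\Q)\subseteq\nu(\pos{R})$; hence two unifiers whose images lie in distinct maximal connected subsets of $\X$ (nothing in the hypotheses rules this out --- $\X$ may, for instance, contain an isolated point outside $Y$) can have no common upper bound. Your directedness argument silently assumes that ``a maximal connected set containing $\u(\Q)\cup\u'(\Q')$'' exists, which fails exactly in this situation; and even when both images lie in the same connected set, assigning to each mixed point $(x,y)$ of $\Q\odot\Q'$ ``a common upper bound'' is not ``exactly as in Lemma~\ref{Lem:DirG}'': there the choice is trivial because $\pos{G}$ has a top element and a unique minimal element, whereas here the choice must be made monotonically and with the correct minimal sets, which connectedness alone does not supply.

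The paper avoids this by using Theorem~\ref{Theo:Baader}(i) instead: besides the unboundedness of the sequence, one must show that whenever $\u_m\leqn\u$ there exists $\nu$ with $\u\leqn\nu$ and $\u_{m+1}\leqn\nu$. This local amalgamation step is the technical heart of the proof; it is carried out by forming $\Z\odot\POne$, sending $(\bot,\{1\})$ to $f(m+1)$ and each $(u,\{1\})$ to $\u(u')$ for a carefully chosen $u'\geq u$ built from joins with elements $\psi(S\cup\{j_u\})$, and then verifying $\nu\circ(\psi\odot{\rm Id}_{\POne})\circ\eta_{m,1}=\u_{m+1}$. None of this appears in your proposal. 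A second, smaller issue, which you yourself flag, is that in the collision argument the joined element must have image above both $a$ and $b$, not merely the right minimal set and an image below $c$ and $d$; the paper secures this by defining $\u_m$ on the sets $\{1,\ldots,n-2\}\cup\{i\}$ as $a\vee_{\Y}\bigvee_{\Y}f(T)$ or $b\vee_{\Y}\bigvee_{\Y}f(T)$ according to parity, so that the join in $\Z$ of the images of two such sets is forced above preimages of $a$ and $b$ and below preimages of both $c$ and $d$, yielding the element that contradicts (vi) and the maximality of $Y$ via Lemma~\ref{Lem:ExtCon}. Your $\{\sigma_i\}\mapsto s^*$, $\{\tau_j\}\mapsto t^*$ set-up as stated only controls minimal sets, so the contradiction with (vi) is not yet obtained.
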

\begin{proof}

By (ii) and (iii) there exists an enumeration $x_1,\ldots,x_n$   of the elements of $\min_{\Y}(a)\cup\min_{\Y}(b)$ such that $x_{n-1}\in\min_{\Y}(a)\setminus\min_{\Y}(b)$ and $x_{n}\in\min_{\Y}(b)\setminus\min_{\Y}(a)$. Since $Y$ is connected, there exists $z\in Y$ such that $c,d\leq z$ and   \begin{equation}\label{Eq:MinZ}
\textstyle
\min_{\Y}(z)=\min_{\Y}(c)\cup\min_{\Y}(d)=\{x_1,\ldots,x_n\}.
\end{equation}

Now let $f\colon \N\to \{x_1,\ldots,x_n\}$ be the map defined by 
$$
f(i)=\begin{cases}
x_i& \mbox{if }i\leq n;\\
x_{n-1}&\mbox{if } i> n\mbox{ and } i\mbox{ is odd};\\
x_n&\mbox{if } i> n\mbox{ and } i\mbox{ is even}.\\
 \end{cases}
$$

By (i), for each  $S\subseteq Y$ such that $|\min_{\Y} S|< n$, the supremum $\bigvee_{\Y} S$ exists in $\Y$ and $\min_{\Y}(\bigvee_{\Y}S)= \min_{\Y} (S)$. 
Hence, for each $m\in\N$ the map $\u_{m}\colon\pos{P}(m)\to \Y$ given~by:
\[
\u_m(T)=\begin{cases}
\bigvee_{\Y}f(T)&\mbox{if }\min_{\Y}(a)\not\subseteq f(T) \mbox{ and }\min_{\Y}(b)\not\subseteq f(T);\\
a\vee_{\Y}\bigvee_{\Y}f(T)&\mbox{if }\min_{\Y}(a)\subseteq f(T)\mbox{ and }f(T)\neq\{x_1,\ldots,x_n\};\\
b\vee_{\Y}\bigvee_{\Y}f(T)&\mbox{if }\min_{\Y}(b)\subseteq f(T)\mbox{ and }f(T)\neq\{x_1,\ldots,x_n\};\\
c&\mbox{if }T=\{1,\ldots,n-2\}\cup\{i,j\};\\
&\ \mbox{ with }n-2<i\leq j\mbox{ and }i\mbox{ odd and }j\mbox{ even}; \\
d&\mbox{if }T=\{1,\ldots,n-2\}\cup\{i,j\};\\
&\ \mbox{ with }n-2<j\leq i\mbox{ and }i\mbox{  odd and }j\mbox{ even}; \\
z&
\mbox{otherwise},
\end{cases}
\]
 is well defined. It is straightforward from (v) and the fact that $c,d\leq z$ that each $\u_m$ is order preserving. By (iv) and \eqref{Eq:MinZ}, $\min_{\X}(\u_m (T))=\min_{\Y}(\u_m (T))=f(T)$. Therefore,  each $\u_m$ is a $p$-morphism. 

For each $m\in \N$, let $\varepsilon_{m}\colon \Pm\to\pos{P}(m+1)$ be the inclusion map. It is easy to check that $\u_m=\u_{m+1}\circ \varepsilon_{m}$. It follows that $\u_1\leqn \u_2\leqn  \cdots$.

Let $\u\colon\Z\to \X$ in $\Un{\POS}(\X)$ such that $\u_{m}\leqn \u$ for some $m\geq n$. We claim:
\begin{itemize}
\item[(a)]  $|Z|\geq m$; and 
\item[(b)] there exists $\nu\in\Un{\POS}(\X)$ such that $\u_{m+1},\u\leqn \nu $.
\end{itemize}

Before proving our claims let us fix $\psi\colon \Pm\to \Z$ a $p$-morphism such that ${\u\circ \psi=\u_m}$. 

Assume now that (a) does not hold, that is, $|Z|<m$. Necessarily,  there exist $i,j\in \{n-1,\dots,m\}$ such that $\psi(\{1,\ldots,n-2\}\cup\{i\})=\psi(\{1,\ldots,n-2\}\cup\{j\})$. By the definition of $f$ and $\u_m$, the numbers $i$ and $j$ have the same parity. Without loss of generality, assume $i$ and $j$ are odd and $i<j$.
Let $k$ be an even number such that $i<k<j$. Then $x=\psi(\{1,\ldots,n-2\}\cup\{i\})\vee_{\Z}\psi(\{1,\ldots,n-2\}\cup\{k\})$ is such that 
$a,b\leq \u(x)\leq c,d$. By (vi), $\u(x)\notin \Y$ and by Lemma~\ref{Lem:ExtCon}, it follows that $\Y\cup\{\u(x)\}$ is connected, which contradicts the maximality of $Y$ in $\Con(\X)$.
This concludes the proof of (a).

To prove (b), assume first that $m$ is odd.
Let $u\in\Z$, if  the set $$\{j\in\{n-1\ldots,m\}\mid j\mbox{ is even and } \psi(\{j\})\not\leq u\}$$ is non-empty, let \[j_u=\max(\{j\in\{n+1\ldots,m\}\mid j\mbox{ is even and } \psi(\{j\})\not\leq u\}).\]
 In this case let  \[\textstyle
u'=u\vee_{\Z}\bigvee_{\Z}\{\psi(S\cup\{j_u\})\mid S=\emptyset\mbox{ or }(S\in\pos{P}(m)\mbox{ and  }\psi(S)\leq u)\}.\]

Now, let $\nu\colon \Z\odot\POne\to\X$  be the map defined by:
$$
\nu(u,S)=\begin{cases}
\u(u)&\mbox{if }S=\bot;\\
f(m+1)&\mbox{if }u=\bot;\\
\u(u')&\mbox{if }u\neq\bot$, $S=\{1\}$, \mbox{ and }$\psi(\{j\})\not\leq_{\Z} u\\
&\ \ $for some even $j\in\{n-1,\ldots,m\};\\
\u(u)&\mbox{if }u\neq\bot$, $S=\{1\}$ and $\psi(\{j\})\leq_{\Z} u\\
&\ \  $for each even $j\in\{n-1,\ldots,m\}.\\
\end{cases}
$$
The equality $\nu(\min_{\Z\odot\POne}(u,S))=\min_{\X}(\nu(u,S))=\min_{\Y}(\nu(u,S))$ follows directly from the definition of $\nu$ and $u'$.
 Let $(u,S),(w,T)\in\Z\odot\POne$ be such that $(u,S)\leq(w,T)$. If $S=\bot$, then $u,w\in\Y$, and  $\nu(u,S)=\u(u)\leq \u(w)\leq \nu(w,T)$. If $S=\{1\}$ and $u\neq\bot$ then $\nu(u,S)=\u(u')\leq \u(w')=\nu(w,T)$. Finally, if $u=\bot$ and $w\in \Y$, then $\nu(u,S)=f(m+1)=f(m-1)=\u(\psi(\{m-1\}))\leq \u(w\vee_{\Z}\psi(\{m-1\}))$. Since $m$ is odd, $m-1$ is even and $w\vee_{\Z}\psi(\{m-1\})\leq v'$. Therefore, \[\nu(u,S)\leq \u(w\vee_{\Z}\psi(\{m-1\}))\leq\nu(w,T).\] 
We conclude that  $\nu$ is order preserving and a $p$-morphism.

From the definition of $\nu$ it follows that $\nu\circ\iota_{\Y}=\u$. This implies that $\u\leqn \nu$.

 We shall now prove that $\u_{m+1}\leqn \nu$. We claim $\nu\circ(\psi\odot{\rm Id}_{\POne})\circ\eta_{m,1}=\u_{m+1}$, where $\eta_{m,1}\colon\pos{P}(m+1)\to \pos{P}(n)\odot\POne $ is  defined in Example~\ref{Ex:Eta}, and ${\rm Id}_{\POne}\colon\POne\to\POne$ denotes the identity map. Let $T\in \pos{P}(m+1)$. If $m+1\notin T$, then 
\begin{align*}
(\nu\circ(\psi\odot{\rm Id}_{\POne})\circ\eta_{m,1})(T)&=
(\nu\circ(\psi\odot{\rm Id}_{\POne}))(T,\bot)\\
&=\nu(\psi(T),\bot)=\u_{m}(T)=\u_{m+1}(T).
\end{align*} 
 
If $T=\{m+1\}$, $$(\nu\circ(\psi\odot{\rm Id}_{\POne})\circ\eta_{m,1})(T)=\nu(\bot,\{1\})=f(m+1)=\nu_{m+1}(\{m+1\}).$$
If $m+1\in T$ and $T\setminus\{m+1\}\neq \emptyset$, let $S=\{j\in\{n-1,n,\ldots,m\}\mid j\mbox{ even and } j\notin T\}$. If $S=\emptyset$,  there are at least two even elements in $\{n-1,n,\ldots,m\}\cap T$ and define  $T'= T\setminus\{m+1\}$. In case $S\neq\emptyset$ define $i=\max(S)$ and $T'=\{i\}\cup T\setminus\{m+1\}$. Whether or not $S$ is empty, we can write \begin{align*}
(\nu\circ(\psi\odot{\rm Id}_{\POne})\circ\eta_{m,1})(T)&=
(\nu\circ(\psi\odot{\rm Id}_{\POne}))(T\setminus\{m+1\},\{1\})\\
&=\nu(\psi(T\setminus\{m+1\}),\{1\})
=\u_{m}(T')\\
&=\u_{m+1}(T).
\end{align*} 

We have proved (b) when $m$ is odd. Replacing odd for even and vice versa in the previous argument, we obtain a proof of (b) for $m$ even.

Finally from (a) it follows that $\u_1,\u_2,\ldots$ do not have a common upper bound. This, combined with (b), proves that the  sequence $\u_1,\u_2,\ldots$ satisfies condition~(i) of Theorem~\ref{Theo:Baader}. Therefore, $\Type{}(\Un{\POS}(\X))=0$.
\end{proof}

The reader will notice that the statements of the next two lemmas can be simplified. We have chosen not to do so to highlight the similarities between them and Lemma~\ref{Lem:PDLA}.  In this way, all these statements differ only on condition (iii). This stresses the point that Lemmas~\ref{Lem:PDLA}, \ref{Lem:PDLB} and \ref{Lem:PDLC} are particular cases needed in the proof of Lemma~\ref{Lem:PDL}. 
\begin{lemma}\label{Lem:PDLB}
Let $\X$ be a finite poset. Assume there is $Y\in\max(\Con(\X))$,  $a,b,c,d\in Y$ and $n\in \N$ satisfying the following conditions\begin{itemize}
\item[(i)] $\Y$ satisfies $(*_{n-1})$;
\item[(ii)] $|\min_{\Y}(a)\cup\min_{\Y}(b)|=n;$
\item[(iii)] $\min_{\Y}(a)=\min_{\Y}(b)$;
\item[(iv)] $\min_{\Y}(c)=\min_{\Y}(d)=\min_{\Y}(a)\cup\min_{\Y}(b)$;
\item[(v)] $a,b\leq c,d$;
\item[(vi)] there is no $e\in Y$ such that $a,b\leq e\leq c,d$.
\end{itemize}
Then $\Type{}(\Un{\POS}(\X))=0$.
\end{lemma}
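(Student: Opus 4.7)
The plan is to imitate the three-step proof of Lemma~\ref{Lem:PDLA}: build an ascending chain $\u_1 \leqn \u_2 \leqn \cdots$ in $\Un{\POS}(\X)$ with image in $Y$, show that any $\u\colon \Z\to\X$ with $\u_m \leqn \u$ has $|Z|\geq m$ (so no common upper bound exists), and produce, for each such $\u$, a common upper bound $\nu$ of $\u$ and $\u_{m+1}$ by way of Theorem~\ref{Theo:Coproduct}. Theorem~\ref{Theo:Baader}(i) then yields $\Type{}(\Un{\POS}(\X))=0$. The second and third steps should transpose essentially word for word from Lemma~\ref{Lem:PDLA}; the true novelty lies in the first step.

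The obstacle is that in Lemma~\ref{Lem:PDLA} the choice between $a$ and $b$ (and between $c$ and $d$) was read off from the minimal set produced by the cyclic map $f$, exploiting $x_{n-1}\in\min_{\Y}(a)\setminus\min_{\Y}(b)$ and $x_n\in\min_{\Y}(b)\setminus\min_{\Y}(a)$. Under condition~(iii) these two minimal sets coincide, so this mechanism fails. My plan is to retain the cyclic map
\[
f(i)=\begin{cases} x_i & 1\leq i\leq n,\\ x_{n-1} & i>n\text{ and }i\text{ odd},\\ x_n & i>n\text{ and }i\text{ even},\end{cases}
\]
and an element $z\in Y$ with $c,d\leq z$ and $\min_{\Y}(z)=\{x_1,\ldots,x_n\}$ (which exists by~(i) together with connectedness of $Y$), but to encode the $a$/$b$ distinction through which \emph{index positions}, not merely which $f$-values, of $T\setminus\{1,\ldots,n-2\}$ complete $\min_{\Y}(a)=\min_{\Y}(b)$. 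Concretely, I will declare $\u_m(T)=a$ for subsets $T$ that complete $\{x_1,\ldots,x_n\}$ using only odd extra indices, $\u_m(T)=b$ for subsets completing using only even extras, $\u_m(T)=c$ or $d$ for subsets mixing both parities (split according to which of the smallest odd and smallest even extra indices is smaller, mirroring the $i\leq j$ vs $j\leq i$ cases of Lemma~\ref{Lem:PDLA}), and $\u_m(T)=z$ otherwise; subsets with $f(T)\subsetneq\{x_1,\ldots,x_n\}$ map to $\bigvee_{\Y} f(T)$ exactly as in Lemma~\ref{Lem:PDLA}.

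The $p$-morphism property will then reduce to the fact that each of $a,b,c,d,z$ has minimal set $\{x_1,\ldots,x_n\}$, matching $\u_m(\min T)=f(T)$; monotonicity will follow from (iv), (v) and the observation $\bigvee_{\Y} S\leq w$ whenever $S\subseteq\min_{\Y}(w)$; and the inclusion $\pos{P}(m)\hookrightarrow\pos{P}(m+1)$ intertwines $\u_m$ with $\u_{m+1}$, yielding $\u_m\leqn \u_{m+1}$. For the non-existence of a common upper bound, I will assume $\u\colon \Z\to\X$ with $\u_m\leqn\u$ via $\psi$ and $|Z|<m$; pigeonhole then forces $\psi$ to identify two ``completing'' subsets $T,T'$ of the same parity-type, and joining $\psi(T)$ in $\Z$ with the $\psi$-image of a completing subset of opposite parity-type (the join existing since $\Z$ satisfies $\Cstar$) yields an element $x\in Z$ whose image $\u(x)$ sits strictly between $\{a,b\}$ and $\{c,d\}$ in $\X$. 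Lemma~\ref{Lem:ExtCon} together with (vi) then gives $\u(x)\notin Y$ while $Y\cup\{\u(x)\}$ is connected, contradicting $Y\in\max(\Con(\X))$. The cofinality clause of Theorem~\ref{Theo:Baader}(i) transcribes unchanged from Lemma~\ref{Lem:PDLA}: $\nu$ is defined on $\Z\odot\POne$ via Theorem~\ref{Theo:Coproduct}, and the isomorphism $\eta_{m,1}$ of Example~\ref{Ex:Eta} certifies $\u_{m+1}\leqn \nu$.

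The main obstacle will be the combinatorial bookkeeping needed to verify that the revised rule for $\u_m$ on subsets $T$ with $f(T)=\{x_1,\ldots,x_n\}$ simultaneously produces a well-defined $p$-morphism, the chain property $\u_m\leqn\u_{m+1}$, and the precise collision pattern required for the maximality-of-$Y$ contradiction. Once this verification is complete, the remainder of the argument is a direct transcription of the corresponding passages in the proof of Lemma~\ref{Lem:PDLA}.
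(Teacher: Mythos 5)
Your overall strategy is exactly the paper's: an ascending chain $\u_1\leqn\u_2\leqn\cdots$ of unifiers $\u_m\colon\pos{P}(m)\to\Y$ with image in $Y$, a pigeonhole argument showing that any $\u\colon\Z\to\X$ above $\u_m$ has $|Z|\geq m$ (via a join in $\Z$ whose image would have to lie strictly between $\{a,b\}$ and $\{c,d\}$, so that Lemma~\ref{Lem:ExtCon} and (vi) contradict maximality of $Y$ in $\Con(\X)$), and the cofinality step via $\Z\odot\POne$ and Theorem~\ref{Theo:Baader}(i). The gap sits precisely in the step you single out as the novelty: the definition of $\u_m$ on the completing sets. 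Retaining the alternating tail $f(i)=x_{n-1}$ ($i>n$ odd), $f(i)=x_n$ ($i>n$ even) from Lemma~\ref{Lem:PDLA} is the wrong move under hypothesis (iii). First, it is undefined when $n=1$, a case this lemma must cover. Second, with that $f$ a set $T$ satisfies $f(T)=\{x_1,\ldots,x_n\}$ only if its extras contain a preimage of $x_{n-1}$ (namely $n-1$ or an odd $i>n$) \emph{and} a preimage of $x_n$ (namely $n$ or an even $i>n$); when $n$ is even every preimage of $x_n$ is even and every preimage of $x_{n-1}$ is odd, so the classes ``completes using only odd extras'' and ``completes using only even extras'' are both empty --- your $a$- and $b$-classes vanish exactly where the pigeonhole step needs roughly $m$ members in each. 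Third, if every ``mixed'' completing set is routed to $c$ or $d$ by comparing its smallest odd and smallest even extras, monotonicity fails: adjoining a small even index to a $c$-set can turn it into a $d$-set, and (vi) forces $c$ and $d$ (and likewise $a$ and $b$) to be incomparable.

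The repair, which is what the paper does, is to drop the alternation in $f$ altogether --- under (iii) there is nothing in $\min_{\Y}(a)\cup\min_{\Y}(b)$ to alternate between --- and put the parity device into the choice of target instead: set $f(i)=x_i$ for $i\leq n$ and $f(i)=x_n$ for $i>n$, so that $\{1,\ldots,n-1\}\cup\{i\}$ is completing for \emph{every} $i\geq n$; send it to $a$ or $b$ according to the parity of $i$; send $\{1,\ldots,n-1\}\cup\{i,j\}$ with $i$ odd, $j$ even, $n\leq i\leq j$ to $c$ and with $n\leq j\leq i$ to $d$; and send every other completing set to $z$, an upper bound of $c,d$ in $Y$ supplied by connectedness. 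With that definition the well-definedness, monotonicity, chain, collision and cofinality verifications do transcribe from Lemma~\ref{Lem:PDLA} in the way you intend.
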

\begin{proof}
Let $x_1,\ldots,x_n$ an enumeration of the elements of $\min_{\Y}(a)=\min_{\Y}(b)$.
Let $f\colon \N\to \{x_1,\ldots,x_n\}$ be defined by 
$$
f(i)=\begin{cases}
x_i& \mbox{if }i\leq n\\
x_{n}&\mbox{if } i> n.\\
 \end{cases}
$$
Since $Y$ is connected, there exits $z\in Y$ such that $c,d\leq z$

For each $m\in\N$ we define the maps $\u_{m}\colon\pos{P}(m)\to \X$ as follows:
$$
\u_m(T)=\begin{cases}
\bigvee_{\Y}f(T)&\mbox{if }f(T)\neq\{x_1,\ldots,x_n\};\\
a&\mbox{if } T=\{1,\ldots,n-1\}\cup\{i\} \mbox{ and } i\geq n\mbox{  is odd;}\\
b&\mbox{if } T=\{1,\ldots,n-1\}\cup\{i\} \mbox{ and } i\geq n\mbox{  is even;}\\
c&\mbox{if }T=\{1,\ldots,n-1\}\cup\{i,j\}, \\
&\mbox{ with }n\leq i\leq j\mbox{ and }i\mbox{ is odd and }j\mbox{ is even}; \\
d&\mbox{if }T=\{1,\ldots,n-1\}\cup\{i,j\},\\
&\mbox{ with }n\leq j\leq i\mbox{ and }i\mbox{ is odd and }j\mbox{ is even}; \\
z&
\mbox{otherwise}.
\end{cases}
$$
By (i), the maps $\u_m$ are well defined. (Observe that if $n=1$, the first line is never applied. In this case (i)  simply states that $\Y$ is non-empty, which trivially holds.) Using the same argument as
in Lemma~\ref{Lem:PDLA} it is possible to prove that the sequence $\u_1,\u_2,\ldots$ is a sequence of unifiers for $\X$ satisfying condition (i) of Theorem~\ref{Theo:Baader}. Therefore, $\Type{}(\Un{\POS}(\X))=0$.
\end{proof}

\begin{lemma}\label{Lem:PDLC}
Let $\X$ be a finite poset. Assume there is $Y\in\max(\Con(\X))$,  $a,b,c,d\in Y$ and $n\in \N$ satisfying the following conditions
\begin{itemize}
\item[(i)] $\Y$ satisfies $(*_{n-1})$;
\item[(ii)] $|\min_{\Y}(a)\cup\min_{\Y}(b)|=n;$
\item[(iii)] $\min_{\Y}(a)\subsetneq\min_{\Y}(b)$;
\item[(iv)] $\min_{\Y}(c)=\min_{\Y}(d)=\min_{\Y}(a)\cup\min_{\Y}(b)$;
\item[(v)] $a,b\leq c,d$;
\item[(vi)] there is no $e\in Y$ such that $a,b\leq e\leq c,d$.
\end{itemize}
Then $\Type{}(\Un{\POS}(\X))=0$.
\end{lemma}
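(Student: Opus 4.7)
The plan is to mimic the strategy of Lemmas~\ref{Lem:PDLA} and~\ref{Lem:PDLB}. First I will set $k=|\min_\Y(a)|$, so $1\le k<n$ by (iii), enumerate $\min_\Y(b)=\{x_1,\ldots,x_n\}$ so that $\min_\Y(a)=\{x_1,\ldots,x_k\}$, and, using the connectedness of $Y$ applied to the pair $c,d$, fix $z\in Y$ with $c,d\le z$ and $\min_\Y(z)=\min_\Y(b)$. I will then define $f\colon\N\to\min_\Y(b)$ by $f(i)=x_i$ for $1\le i\le n$ and $f(i)=x_1$ for $i>n$; under this choice $f(T)=\min_\Y(a)$ exactly when $\{1,\ldots,k\}\subseteq T$ and $T\cap\{k+1,\ldots,n\}=\emptyset$, and $f(T)=\min_\Y(b)$ exactly when $T\supseteq\{1,\ldots,n\}$.

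For each $m\ge n+2$ I will define $\u_m\colon\pos{P}(m)\to\X$ by the assignments $\u_m(\{1,\ldots,k,i\})=a$ for $i>n$ odd; $\u_m(\{1,\ldots,n,\ell\})=b$ for $\ell>n$ even; $\u_m(\{1,\ldots,n,i,\ell\})=c$ (resp.\ $d$) when $i,\ell\in\{n+1,\ldots,m\}$ have opposite parities and the odd member is the smaller (resp.\ larger) of the two; and, on every remaining $T$, by $\bigvee_\Y f(T)$ when $f(T)\ne\min_\Y(b)$ (guaranteed to exist in $\Y$ by (i)) and by $z$ otherwise. The inclusion $\pos{P}(m)\hookrightarrow\pos{P}(m+1)$ then gives $\u_m\leqn\u_{m+1}$.

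To apply Theorem~\ref{Theo:Baader}(i) I will establish the two claims (a) and (b) appearing in the proof of Lemma~\ref{Lem:PDLA}. For (a), given $\u\in\Un{\POS}(\X)$ and a $p$-morphism $\psi\colon\pos{P}(m)\to\Z$ with $\u\circ\psi=\u_m$, the $m$ elements $\psi(\{i\})$ for $1\le i\le n$, $\psi(\{1,\ldots,k,i\})$ for $i>n$ odd, and $\psi(\{1,\ldots,n,\ell\})$ for $\ell>n$ even must be pairwise distinct since their $\u$-images $x_1,\ldots,x_n,a,b$ are; hence $|Z|\ge m$. Indeed, if $|Z|<m$ then pigeonhole produces a coincidence within the $a$- or $b$-family, say $\psi(\{1,\ldots,k,i\})=\psi(\{1,\ldots,k,j\})$ for some $i<j$ both odd. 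Picking an even $\ell$ with $i<\ell<j$ and forming $x=\psi(\{1,\ldots,k,i\})\vee_\Z\psi(\{1,\ldots,n,\ell\})$ (which exists because $\Z$ satisfies $\Cstar$), one obtains $a,b\le\u(x)\le c,d$ from the bounds $x\le\psi(\{1,\ldots,n,i,\ell\})$ and $x\le\psi(\{1,\ldots,n,j,\ell\})$ together with the assignments $\u_m(\{1,\ldots,n,i,\ell\})=c$ and $\u_m(\{1,\ldots,n,j,\ell\})=d$; condition (vi) together with Lemma~\ref{Lem:ExtCon} and the maximality of $Y$ in $\Con(\X)$ then yields a contradiction. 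A symmetric argument handles a collision within the $b$-family. For (b), I will construct $\nu\colon\Z\odot\POne\to\X$ bounding $\u$ and $\u_{m+1}$ above exactly as in the proof of Lemma~\ref{Lem:PDLA}, using Theorem~\ref{Theo:Coproduct}; an application of Theorem~\ref{Theo:Baader}(i) then gives $\Type{}(\Un{\POS}(\X))=0$.

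The principal obstacle will be the verification that $\u_m$ is monotone and preserves minima, because condition (v) does not give $a\le b$; in fact (vi) together with (v) forces $a$ and $b$ to be incomparable, so $\u_m$ cannot be defined by any rule that requires this comparison. The case split above is engineered so that no chain $T\subsetneq T'$ in $\pos{P}(m)$ ever sends $T$ to $a$ and $T'$ to $b$: the ``extra indices'' labelling an $a$-preimage are odd while those labelling a $b$-preimage are even, so the only way to embed a set from the $a$-family into a set from the $b$-family would require the same index to be simultaneously odd and even. With this parity separation in place, the monotonicity check becomes a routine case analysis on the position of $T\setminus\{1,\ldots,n\}$ and on the behaviour of $\bigvee_\Y$ on the intermediate subsets of $\min_\Y(b)$.
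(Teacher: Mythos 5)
Your overall strategy --- a chain $\u_1\leqn\u_2\leqn\cdots$ of unifiers defined on the posets $\pos{P}(m)$, the two claims (a) and (b), and an appeal to Theorem~\ref{Theo:Baader}(i) --- is exactly the route the paper takes, and your collision argument for claim (a) is sound. The gap is in the definition of $\u_m$ itself: the map you propose is not monotone, hence not a $p$-morphism and not a unifier. The parity separation you engineer only prevents a set of the $a$-family from being contained in a set of the $b$-family; it does nothing about containments of an $a$-set in one of the ``remaining'' sets, and that is where monotonicity breaks. Concretely, take $T=\{1,\ldots,k,i\}$ with $i>n$ odd, so $\u_m(T)=a$, and $T'=\{1,\ldots,k,i,\ell\}$ with $\ell>n$ even. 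Then $T\subsetneq T'$, $T'$ lies in none of your explicit families, and $f(T')=\{x_1,\ldots,x_k\}=\min_{\Y}(a)$ because your $f$ sends every index above $n$ to $x_1$; so $\u_m(T')=\bigvee_{\Y}\min_{\Y}(a)$. But $\bigvee_{\Y}\min_{\Y}(a)$ is the \emph{least} upper bound of $\min_{\Y}(a)$ and therefore lies \emph{below} $a$; monotonicity would force $a=\bigvee_{\Y}\min_{\Y}(a)$, which the hypotheses do not give. Similarly, for $k<j\le n$ with $n>k+1$, the set $T''=\{1,\ldots,k,i,j\}\supsetneq T$ receives $\bigvee_{\Y}\{x_1,\ldots,x_k,x_j\}$, and there is no reason for $a$ to lie below this join: $a$ is an upper bound of $\{x_1,\ldots,x_k\}$ but not of $x_j$, while the least upper bound of $\{x_1,\ldots,x_k,x_j\}$ need not dominate any particular such $a$.

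The repair is the one the paper uses both in Lemma~\ref{Lem:PDLA} and in its own proof of this statement: do not single out the sets $\{1,\ldots,k,i\}$ as the sole preimages of $a$, but send \emph{every} $T$ with $\min_{\Y}(a)\subseteq f(T)$, $f(T)\neq\min_{\Y}(b)$ and $T\not\subseteq\{1,\ldots,n\}$ to $a\vee_{\Y}\bigvee_{\Y}f(T)$, which exists by (i) since $|f(T)|\le n-1$, is $\geq a$ by construction, and equals $a$ when $f(T)=\min_{\Y}(a)$. In addition, $f$ should alternate on indices $>n$ between an element of $\min_{\Y}(a)$ and the element of $\min_{\Y}(b)\setminus\min_{\Y}(a)$ (rather than collapsing all large indices to $x_1$), so that the $c$/$d$ assignments and the collision argument retain the structure of Lemma~\ref{Lem:PDLA}. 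With those two changes the remainder of your argument goes through.
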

\begin{proof}
 As in the proof of Lemma~\ref{Lem:PDLB},  we present a sequence of unifiers $\u_1,\u_2,\ldots$ for $\X$ satisfying condition (i) of Theorem~\ref{Theo:Baader}, but we omit the details since they follow by similar arguments to the ones used in Lemma~\ref{Lem:PDLA}.

Let $x_1,\ldots,x_n$ be an enumeration of the elements of $\min_{\Y}(b)$. Without loss of generality, assume that $n$ is even and $x_{n-1}\in\min_{\Y}(x)$ and $x_{n}\in\min_{\Y}(b)\setminus\min_{\Y}(a)$.
Let $f\colon \N\to \{x_1,\ldots,x_n\}$ be defined by 
$$
f(i)=\begin{cases}
x_i& \mbox{if }i\leq n\\
x_{n-1}&\mbox{if } i> n\mbox{ and } i\mbox{ is odd},\\
x_n&\mbox{if } i> n\mbox{ and } i\mbox{ is even}.\\
 \end{cases}
$$
For each $m\in\N$ we define the maps $\u_{m}\colon\pos{P}(m)\to \Y$ as follows:
$$
\u_m(T)=\begin{cases}
\bigvee_{\Y}f(T)&\mbox{if }\min_{\Y}(a)\not\subseteq f(T)\mbox{ or }T\subsetneq\{1,\dots,n\}\\
a\vee_{\Y}\bigvee_{\Y}f(T)&\mbox{if }\min_{\Y}(a)\subseteq f(T)\mbox{, }f(T)\neq\{x_1,\ldots,x_n\}\\
&\mbox{ and }T\not\subseteq\{1,\dots,n\}\\
b&\mbox{if }T=\{1,\ldots,n-1\}\cup\{i\} \mbox{ for some even } i\geq n\\
c&\mbox{if }T=\{1,\ldots,n-2\}\cup\{i,j\},\\
&\mbox{ for some  }i\mbox{ odd and }j\mbox{ even, such that } n\leq i\leq j; \\
d&\mbox{if }T=\{1,\ldots,n-2\}\cup\{i,j\},\\
&\mbox{ for some  }i\mbox{ odd and }j\mbox{ even, such that } n\leq j\leq i; \\
z&
\mbox{otherwise}.
\end{cases}
$$
\end{proof}

\begin{lemma}\label{Lem:PDL}
Let $\X$ be a non-empty finite poset. Assume there exist $Y\subseteq \X$   satisfying the following conditions
\begin{itemize}
\item[(i)] $Y$ is a maximal element of $\Con(\X)$; 
\item[(ii)] $\Y$ does not satisfy $\Cstar$.
\end{itemize}
Then $\Type{}(\Un{\POS}(\X))=0$.

\end{lemma}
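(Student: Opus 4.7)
The plan is to use the failure of $\Cstar$ in $\Y$ to locate $a, b, c, d \in Y$ and $n \in \N$ fulfilling the hypotheses of one of Lemmas~\ref{Lem:PDLA}, \ref{Lem:PDLB}, and \ref{Lem:PDLC}. Since $\X$ is non-empty, the set $Y$ contains at least a minimal element of $\X$ (any such singleton is connected and can be extended inside a maximal chain in $\Con(\X)$), so $\Y$ satisfies $(*_0)$. Let $n \geq 1$ be the least positive integer for which $\Y$ fails $(*_n)$; then $\Y$ satisfies $(*_{n-1})$, which supplies hypothesis~(i) of all three lemmas.

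Next I would observe that, because $Y$ is connected, whenever $x \vee_{\Y} y$ happens to exist it automatically satisfies $\min_{\Y}(x \vee_{\Y} y) = \min_{\Y}(x) \cup \min_{\Y}(y)$: connectedness provides some $z \in Y$ with $x, y \leq z$ and $\min_{\Y}(z) = \min_{\Y}(x) \cup \min_{\Y}(y)$, whence $x \vee_{\Y} y \leq z$ and the two inclusions combine. Consequently the failure of $(*_n)$ reduces to producing $a, b \in Y$ with $|\min_{\Y}(a) \cup \min_{\Y}(b)| = n$ for which $a \vee_{\Y} b$ does not exist in $\Y$. Finiteness of $Y$ then supplies two incomparable minimal upper bounds $c, d$ of $\{a, b\}$ in $\Y$. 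With such a choice condition~(vi) is automatic, since any $e \in Y$ with $a, b \leq e \leq c$ and $e \leq d$ would, by the minimality of $c$, force $e = c$ and then $c \leq d$, contradicting the incomparability of $c$ and $d$.

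The real work is to enforce condition~(iv), namely $\min_{\Y}(c) = \min_{\Y}(d) = \min_{\Y}(a) \cup \min_{\Y}(b)$, and this is where I expect the main obstacle. The strategy is to refine the choice of $(a, b)$ by taking it maximal in the product order of $Y \times Y$ among failing pairs with $|\min_{\Y}(a) \cup \min_{\Y}(b)| = n$ sharing the same set $S = \min_{\Y}(a) \cup \min_{\Y}(b)$. If some minimal upper bound $c$ of $\{a, b\}$ had $\min_{\Y}(c) \supsetneq S$, then by combining $(*_{n-1})$ with Lemma~\ref{Lem:ExtCon} and the maximality of $Y$ in $\Con(\X)$ one produces a strictly larger failing pair above $(a, b)$ with the same $S$, contradicting the maximality of $(a, b)$; this forces $\min_{\Y}(c) = S$, and symmetrically $\min_{\Y}(d) = S$.

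Finally, the three possibilities for $\min_{\Y}(a)$ and $\min_{\Y}(b)$ are exhaustive: they are incomparable as subsets of $\min(\X)$, equal, or one strictly contains the other. After possibly swapping $a$ and $b$, these cases match hypothesis~(iii) of Lemmas~\ref{Lem:PDLA}, \ref{Lem:PDLB}, and \ref{Lem:PDLC} respectively, and the corresponding lemma delivers $\Type{}(\Un{\POS}(\X)) = 0$, concluding the proof.
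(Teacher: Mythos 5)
Your proposal follows the paper's route exactly: take the least $n$ at which $\Y$ fails $(*_n)$, extract a failing pair $a,b$ with $|\min_{\Y}(a)\cup\min_{\Y}(b)|=n$, use connectedness to reduce the failure of $\Cnstar$ to the non-existence of $a\vee_{\Y}b$, produce $c,d\geq a,b$ with nothing strictly between, and dispatch the three possible relations between $\min_{\Y}(a)$ and $\min_{\Y}(b)$ to Lemmas~\ref{Lem:PDLA}, \ref{Lem:PDLB} and \ref{Lem:PDLC}. Your handling of conditions (i), (ii), (iii), (v) and (vi) matches the paper's, and your remark that an existing join automatically has the correct minimal set is precisely the content hidden in the paper's phrase ``they do not have a lowest upper bound.''

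The gap is exactly where you predicted it: condition (iv). You take $c,d$ to be two incomparable minimal upper bounds of $\{a,b\}$ and then try to force $\min_{\Y}(c)=\min_{\Y}(d)=S$ by choosing $(a,b)$ maximal among failing pairs with union $S$; but the pivotal claim --- that a minimal upper bound $c$ with $\min_{\Y}(c)\supsetneq S$ yields a strictly larger failing pair above $(a,b)$ with the same union $S$ --- is asserted rather than proved, and nothing in your argument delivers it. Concretely, suppose the set $U=\{u\in Y\mid u\geq a,b \mbox{ and } \min_{\Y}(u)=S\}$ has a minimum $c_0$, while a further upper bound $w$ of $\{a,b\}$ with $\min_{\Y}(w)\supsetneq S$ is incomparable with $c_0$. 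Then $a\vee_{\Y}b$ does not exist, the pair $(a,b)$ can already be maximal among failing pairs with union $S$, and yet every choice of $c,d$ satisfying (iv) and (v) admits $e=c_0$ with $a,b\leq e\leq c,d$, so (vi) fails; your maximality argument produces no contradiction here, and the reduction to Lemmas~\ref{Lem:PDLA}--\ref{Lem:PDLC} is not achieved. In fairness, the paper's own proof passes over this very point with a one-line ``equivalently'' that only covers the easy implication (the existence of such $c,d$ prevents a least upper bound); the substantive converse --- that the non-existence of the join always yields \emph{two} upper bounds with minimal set exactly $S$ and nothing between --- is left unestablished there as well, so your write-up is no less complete than the paper's, but as it stands this step is not proved.
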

\begin{proof}
Since $\X$ is non-empty $Y$ is non-empty. 
Let  $n\in\N$  be the minimal natural number such that 
$\Y$ does not satisfy $\Cnstar$. Since $Y$ satisfies $(*_0)$, by (ii) such $n$ must exist. 
Further, there are $a,b\in Y$ witnessing the failure of $\Cnstar$. 
Since $Y$ is connected, we have that $a,b$ are such that $|\min_{\Y}(a)\cup\min_{\Y}(b)|=n$,  and they do not have a lowest upper bound, equivalently,  there exist $c,d\geq a,b$ such that $$\textstyle\min_{\Y}(c)=\min_{\Y}(d)=\min_{\Y}(a)\cup\min_{\Y}(b),$$ but there is not $e\in Y$ such that $a,b\leq e\leq c,d$.
Observe that if there is $f\in\X$ such that  $a,b\leq f\leq c,d$,  then $Y\cup\{f\}\in\Con(\X)$ which contradicts the maximality of $Y$. 

Now the proof divides in four cases: 
\begin{itemize}
\item[(a)] $\min_{\Y}(a)=\min_{\Y}(b)$;
\item[(b)] $\min_{\Y}(a)\subsetneq\min_{\Y}(b)$; 
\item[(c)] $\min_{\Y}(b)\subsetneq\min_{\Y}(a)$; and 
\item[(d)] $\min_{\Y}(a)\not\subseteq\min_{\Y}(b)$ and $\min_{\Y}(b)\not\subseteq\min_{\Y}(a)$.
\end{itemize}

If (a) holds, then Lemma~\ref{Lem:PDLB} implies that $\Type{}(\Un{\POS}(\X))=0$. Similarly, if (b) or (c) holds, Lemma~\ref{Lem:PDLC} gives the same conclusion. Finally, Lemma~\ref{Lem:PDLA} proves that $\Type{}(\Un{\POS}(\X))=0$, if (d) is the case.
\end{proof}

We are now ready to prove the main result of this section.
\begin{theorem}\label{Theo:ClassPDL}
Let $\X$ be a non-empty poset in $\POS$. Then 
$$
\Type{}(\Un{\POS}(\X))=\begin{cases}
|\max(\Con(\X))|& \mbox{ if each }\pos{Y}\in \max(\Con(\X)) \mbox{ satisfies } \Cstar;\\
0 & \mbox{otherwise}.
\end{cases}
$$
\end{theorem}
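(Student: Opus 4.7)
The plan is to split into two cases according to whether some $\pos{Y}\in\max(\Con(\X))$ fails to satisfy $\Cstar$. If such a $\pos{Y}$ exists, Lemma~\ref{Lem:PDL} immediately yields $\Type{}(\Un{\POS}(\X))=0$, matching the second line of the formula. The real content is therefore the opposite case, where I aim to produce an explicit minimal complete set of unifiers of the prescribed cardinality.

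Assume now that every $\pos{Y}\in\max(\Con(\X))$ satisfies $\Cstar$. For each such $Y$, let $i_Y\colon\pos{Y}\to\X$ be the subposet inclusion. The remark following Definition~\ref{Def:Con} gives $\min_{\X}(y)=\min_{\pos{Y}}(y)$ for every $y\in Y$, so $i_Y$ is a $p$-morphism, and together with the hypothesis that $\pos{Y}$ satisfies $\Cstar$ this places $i_Y$ in $\Un{\POS}(\X)$. I claim that $M=\{i_Y\mid Y\in\max(\Con(\X))\}$ is a minimal complete set. For completeness, given $\u\colon\Z\to\X$ in $\Un{\POS}(\X)$, Lemma~\ref{Lem:RangCon} shows $\u(Z)$ is connected in $\X$, and finiteness of $\X$ lets me extend it to some $Y\in\max(\Con(\X))$. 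Defining $\u'\colon\Z\to\pos{Y}$ by $\u'(z)=\u(z)$ produces a $p$-morphism, since the $\min$-sets in $\pos{Y}$ agree with those in $\X$ on elements of $Y$, and $i_Y\circ\u'=\u$ yields $\u\leqn i_Y$. For minimality, if $i_Y\leqn i_{Y'}$ via a $p$-morphism $f\colon\pos{Y}\to\pos{Y'}$ with $i_{Y'}\circ f=i_Y$, evaluating at $y\in Y$ forces $f(y)=y\in Y'$, so $Y\subseteq Y'$, and maximality forces $Y=Y'$. Consequently $\Type{}(\Un{\POS}(\X))=|M|=|\max(\Con(\X))|$.

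The step I expect to demand the most care is the completeness half of the positive case: one must verify that the corestriction $\u'$ is a legitimate $p$-morphism into $\pos{Y}$, which hinges on the equality $\min_{\X}(y)=\min_{\pos{Y}}(y)$ built into Definition~\ref{Def:Con}(i), and that $\u(Z)$ actually sits inside some \emph{maximal} connected subset, where Lemma~\ref{Lem:RangCon} does the real work. Everything else reduces either to routine bookkeeping or to the already-established Lemma~\ref{Lem:PDL}.
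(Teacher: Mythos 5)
Your proposal is correct and follows essentially the same route as the paper: type $0$ via Lemma~\ref{Lem:PDL} when some maximal connected subset fails $\Cstar$, and otherwise the inclusions of the maximal connected subsets form a minimal complete set, with completeness supplied by Lemma~\ref{Lem:RangCon} plus finiteness and minimality by the same evaluation argument forcing $Y\subseteq Y'$. The only detail worth keeping explicit, which you do note, is that the corestriction $\u'$ is a $p$-morphism because $\min_{\X}$ and $\min_{\pos{Y}}$ agree on $Y$ by Definition~\ref{Def:Con}(i).
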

\begin{proof}
Observe that since $\X$ is finite and non-empty, also $\max(\Con(\X))$ is finite and non-empty.

Assume first that $\Y$ satisfies $\Cstar$ for each $Y\in \max(\Con(\X))$.  
Then, for each ${Y\in \max(\Con(\X))}$, the inclusion map $\mu_{\Y}\colon \Y\to \X$ is in $\Un{\POS}(\X)$. 
If $\u\colon\Z\to\X\in\Un{\POS}(\X) $, by Lemma 
~\ref{Lem:RangCon}, then $\u(\Z)\in\Con(\X)$. Hence there exists $Y\in \max(\Con(\X))$ such that $\u(\Z)\subseteq Y$. 
This implies that $\u\leqn \mu_{\Y}$. 
Moreover, if  $Y_1$ and $Y_2$ belong  to $\max(\Con(\X))$ and $Y_1\neq Y_2$, then $\mu_{\Y_1}\not\leqn \mu_{\Y_2}$. If we assume the contrary, there is a $p$-morphism $\psi\colon \Y_1\to \Y_2$ such that  $\mu_{\Y_1}= \mu_{\Y_2}\circ \psi$. Then \[Y_1=\mu_{\Y_1}(Y_1)= \mu_{\Y_2}\circ \psi(Y_1)\subseteq \mu_{\Y_2}(Y_2)=Y_2.\]
 Since $Y_1$ is maximal in $\Con(\X)$, we have  $Y_1=Y_2$, a contradiction. 
This proves that the set $\{\mu_{\Y}\mid Y\in \max(\Con(\X))\}$ is a minimal complete set in $\Un{\POS}(\X)$. It follows that
$\Type{}(\Un{\POS}(\X))=|\max(\Con(\X))|$.

Now suppose that $Y\in \max(\Con(\X))$ is such that $\Y$ does not satisfy $\Cstar$. 
By Lemma~\ref{Lem:PDL}, $\Type{}(\Un{\POS}(\X))=0$.
\end{proof}

\section{Type of unification problems in 
$\mathfrak{B}_{n}$ 
with $2\leq n<\omega$}\label{Sec:UnifPDLn}
In this final section we compute the type $\Un{\POn}(\X)$ when $\X\in \POn$ with $n\geq2$.
We first obtain a family of conditions for a poset in $\POn$ to have unification type~$0$ (Lemmas~\ref{Lem:CaseAI}-\ref{Lem:CaseBIII}). Collectively these conditions imply that if a poset $\X\in\POn$  has an $n$-connected subset $Y$ that is maximal in $\Cn(\X)$ and such that $\Y$ does not satisfy~$\Cnstar$, then the type of $\Un{\POn}(\X)$ is $0$ (Lemma~\ref{Lem:Type0PDLn}). Finally in Theorem~\ref{Theo:ClasPDLn} we present our description of the unification type of all posets in $\POn$.

\begin{lemma}\label{Lem:CaseAI}
Let $\X$ be a poset in $\POn$. Assume there exist $Y\in\max(\Cn(\X))$,  $a,b\in Y$ and $k\in \N$ satisfying the following
\begin{itemize}
\item[(i)] $\Y$ satisfies condition $(*_{k-1})$;
\item[(ii)]$|\min_{\Y}(a)\cup\min_{\Y}(b)|=k\leq n$;
\item[(iii)] $\min_{\Y}(a)=\min_{\Y}(b)$;
\item[(iv)] if $c\in Y$ is such that $c\geq a, b$, then $\min_{\Y}(c)\neq\min_{\Y}(a)\cup\min_{\Y}(b)$.
\end{itemize}
Then $\Type{}(\Un{\POn}(\X))=0$.
\end{lemma}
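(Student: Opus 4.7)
The plan is to prove $\Type{}(\Un{\POn}(\X))=0$ by constructing an increasing sequence $\u_1\leqn\u_2\leqn\cdots$ of unifiers for $\X$ without a common upper bound in $\Un{\POn}(\X)$ and applying Theorem~\ref{Theo:Baader}(ii); directedness of $\Un{\POn}(\X)$ is obtained from the $\odot$-construction of Section~\ref{Sec:SpecialProd} precisely as in Lemma~\ref{Lem:DirG}.

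The pivotal observation is that condition~(iv), combined with the maximality of $Y$ in $\Cn(\X)$ and Lemma~\ref{Lem:UpSetNCon}, forbids the existence of \emph{any} $c\in\X$ satisfying $c\geq a,b$ and $\min_\X(c)=\{x_1,\ldots,x_k\}$: such a hypothetical $c\in\X\setminus Y$ would satisfy $c\geq a\in Y$ with $\min_\X(c)=\min_\X(a)$, so Lemma~\ref{Lem:UpSetNCon} would give $Y\cup\{c\}\in\Cn(\X)$, contradicting maximality. I would then fix an enumeration $\min_\Y(a)=\min_\Y(b)=\{x_1,\ldots,x_k\}$ and set $f(i)=x_i$ for $i\leq k$, $f(i)=x_k$ for $i>k$, following Lemma~\ref{Lem:PDLB}.

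The cases $k<n$ and $k=n$ present complementary difficulties. In the principal case $k<n$, taking $\u_m\colon(\pos{P}(m))_n\to\X$ with $\u_m(T)=\bigvee_\Y f(T)$ whenever $f(T)\subsetneq\{x_1,\ldots,x_k\}$ (well defined by~(i)) and $\u_m(\{1,\ldots,k-1,i\})=a$ or $b$ according to the parity of $i\geq k$ is obstructed by sets $T=\{1,\ldots,k-1\}\cup I$ with $|I|\geq 2$: monotonicity forces $\u_m(T)$ to be an upper bound of $a,b$ with min-set $\{x_1,\ldots,x_k\}$, which the pivotal observation forbids. The fix exploits the zigzag path $a=y_0\geq y_1\leq\cdots\leq y_r=b$ of Definition~\ref{Def:n-con}(iii)—available precisely because $k<n$—together with a possibly modified domain, routing each mixed-parity $T$ to a suitable $y_j$ so that monotonicity is preserved; the subsequent pigeonhole step then proceeds unhindered. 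In the boundary case $k=n$ no such $T$ arises in $(\pos{P}(m))_n$, but the pigeonhole step breaks because the join in $\Z$ of two $\psi(\{1,\ldots,k-1,i\})$ would need the union of their minima, of size $k+1=n+1>n$, to be admissible under $\Cnstar$. There one instead takes a domain with $m$ distinct preimages of each $x_i$ and top elements indexed by $n$-tuples, mapped to $a$ or $b$ according to the parity of the index-sum, so that distinguishing $a$ from $b$ forces all $m$ preimages to remain distinct in any upper bound.

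In both cases $\u_m\leqn\u_{m+1}$ is witnessed by natural inclusion maps, and the absence of a common upper bound follows from the appropriate collision argument patterned on Lemma~\ref{Lem:PDLA}: given $\u\colon\Z\to\X$ with $\u_m\leqn\u$ via a $p$-morphism $\psi$, any collision among the designated representatives forces—via a $\Cnstar$-join in $\Z$ whose union of minima has cardinality at most $k+1\leq n$ (or an analogous join in the $k=n$ construction)—an element of $\Z$ whose $\u$-image is an upper bound of $a$ and $b$ in $\X$ with min-set $\{x_1,\ldots,x_k\}$, contradicting the pivotal observation. Hence $|Z|$ must grow linearly with $m$, no finite common upper bound exists in $\Un{\POn}(\X)$, and Theorem~\ref{Theo:Baader}(ii) yields $\Type{}(\Un{\POn}(\X))=0$. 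The principal obstacle throughout is arranging the $k=n$ domain so that the collision argument still delivers the linear lower bound on $|Z|$ despite $\Cnstar$ failing to force the most natural join.
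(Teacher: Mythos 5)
Your pivotal observation (no $c\in\X$ with $c\geq a,b$ and $\min_{\X}(c)=\min_{\Y}(a)\cup\min_{\Y}(b)$, obtained from condition~(iv), Lemma~\ref{Lem:UpSetNCon} and the maximality of $Y$ in $\Cn(\X)$) and the collision argument built on it are exactly the ones the paper uses. However, there are two genuine gaps. First, $\Un{\POn}(\X)$ is in general \emph{not} directed, so Theorem~\ref{Theo:Baader}(ii) is not available: by Lemma~\ref{Lem:RangN-Con} the image of any unifier is $n$-connected, hence contained in a single member of $\max(\Cn(\X))$, and nothing in the hypotheses prevents $\X$ from having several maximal $n$-connected subsets (compare Theorem~\ref{Theo:ClasPDLn}, which allows finite type greater than $1$); unifiers whose images lie in different maximal $n$-connected subsets have no common upper bound. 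The argument of Lemma~\ref{Lem:DirG} does not transfer, because it sends every mixed pair of $\pos{Q}_1\odot\pos{Q}_2$ to the top element of $\pos{G}$, and a general $\X$ has no such element. The paper instead verifies condition~(i) of Theorem~\ref{Theo:Baader}: for each $\u\colon\Z\to\X$ with $\u_m\leqn\u$ it explicitly builds, on $(\Z\odot\POne)_n$, a unifier $\nu$ with $\u,\u_{m+1}\leqn\nu$. This local amalgamation along the sequence is an essential part of the proof that your plan omits.

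Second, your repair of the monotonicity obstruction for $k<n$ does not work as described. Any $T$ containing both an $a$-labelled set $\{1,\ldots,k-1,i\}$ and a $b$-labelled one forces, by monotonicity, $\u_m(T)\geq a,b$ with $\min_{\Y}(\u_m(T))=f(T)=\{x_1,\ldots,x_k\}$, which is precisely what your pivotal observation forbids; the zigzag elements of Definition~\ref{Def:n-con}(iii) are not upper bounds of $a$ and $b$, so routing such a $T$ to one of them destroys monotonicity rather than restoring it. The paper's construction avoids the obstruction altogether: in the truncated domain $(\pos{P}(m))_n$ the values $a$ and $b$ are assigned only to sets $T$ with $f(T)=\{x_1,\ldots,x_k\}$ and $|T|=n$, which are maximal in $(\pos{P}(m))_n$, so no element of the domain lies above two differently labelled sets, while the sets with $f(T)=\{x_1,\ldots,x_k\}$ and $|T|<n$ are sent to a common \emph{lower} bound $z\leq a,b$ with $\min_{\Y}(z)=\{x_1,\ldots,x_k\}$, whose existence is extracted from $n$-connectedness. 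This single definition also covers $k=n$ (where the $z$-clause is vacuous), making your separate, and too vaguely specified, construction for that boundary case unnecessary.
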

\begin{proof}
Let $x_1,\ldots,x_k$ be an enumeration of $\min_{\Y}(a)\cup\min_{\Y}(b)$.

Since $Y$ is $n$-connected it is enough to consider the case when there exists $z\in Y$ such that $a\geq z\leq b$ and $\min_{\Y}(z)=\min_{\Y}(a)=\min_{\Y}(b)$. 

Let  $f\colon \N\to \{x_1,\ldots,x_k\}$ be defined by 
$$
f(i)=\begin{cases}
x_i& \mbox{if }i\leq k;\\
x_k&\mbox{if } i> k.
 \end{cases}
$$

For each $m\in\N$ we define the maps $\u_{m}\colon(\pos{P}(m))_n\to \X$ as follows:
$$
\u_{m}(T)=\begin{cases}
a&\mbox{if }f(T)=\{x_1,\ldots,x_k\}, |T|=n;\\
&\mbox{ and } \min(T\setminus\{1,\ldots k-1\})\mbox{ is odd};\\
b&\mbox{if }f(T)=\{x_1,\ldots,x_k\}, |T|=n\\
&\mbox{ and } \min(T\setminus\{1,\ldots k-1\})\mbox{ is even}\\
z&\mbox{if }f(T)=\{x_1,\ldots,x_k\}\mbox{ and } |T|<n;\\
\bigvee_{\Y}f(T)&
\mbox{otherwise}.
\end{cases}
$$
Observe that, on the one hand if $k=1$, then the last line of the definition of $\u_m$ is never applied. On the other hand, if $k>1$, by (i), $\Y$ satisfies $(*_{k-1})$, hence $\bigvee_{\Y}f(T)$ exists for each $T$ such that $|f(T)|\leq k-1$. Thus the map $\u_m$ is well defined and  satisfies $\min_{\X}(\u_m(T))=\min_{\Y}(\u_m(T))=f(T)$. It is easy to see that each $\u_m$ is order preserving, and a $p$-morphism. Therefore each $\u_m$ is a well-defined element of $\Un{\POn}(\X)$, and  $\u_1\leqn\u_2\leqn \cdots$.

Suppose we are given $\u\colon\Z\to\X$ in $\Un{\POn}(\X)$ and $\psi\colon(\Pm)_n\to \Y$ in such a way that $\u_m=\u\circ\psi$ for some $m> 3n$.
 We claim:
\begin{itemize}
\item[(a)]  $|Z|\geq m-n$; and 
\item[(b)] there exists $\nu\in\Un{\POn}(\X)$ such that $\u_{m+1},\u\leqn \nu $.
\end{itemize}

First assume that $|Z|< m-n$. Then there exist $i,j\in\{n+1,\ldots,m\}$ such that $\psi(\{i\})=\psi(\{j\})$ and $i< j$. 
Let us consider the sets
$T_1=(\{1,\ldots,n\}\cup\{i\})\setminus\{k\}$ and $T_2=(\{1,\ldots,n\}\cup\{j\})\setminus\{k+1\}$.
If  $k$ is even, $\u_m(T_1)=a$ and $\u_m(T_2)=b$. If $k$ is odd, then $\u_m(T_1)=b$ and $\u_m(T_2)=a$. In both cases, $|\min_{\Z}\psi(T_1)\cup\min_{Z}(T_2)|\leq n$.
Since $\Z$ satisfies $\Cnstar$, there is $x\in \Z$ such that $x\geq \psi(T_1),\psi(T_2)$ and 
\[\textstyle\min_{\Z}(x)= \min_{\Z}(\psi(T_1))\cup\min_{\Z}(\psi(T_2)).\]
Then $\u(x)\geq \u(\psi(T_1)),\u(\psi(T_2))$, that is $\u(x)\geq a,b$ and $\min_{\Y}(\u(x))=\{x_1,\ldots,x_k\}$. By (iv), $\u(x)\notin Y$ and by Lemma~\ref{Lem:UpSetNCon}, $Y\cup\{\u(x)\}\in\Cn(\X)$. This contradicts the maximality of $Y$ in $\Cn(\pos{X})$. From this claim, it follows that the sequence $\u_1,\u_2,\ldots$ of unifiers of $\X$ does not admit an upper bound.

To prove claim (b), assume $m$ is odd (the case $m$ even follows by a simple modification of this argument).
In the proof of claim (a) we observed that $\psi(\{i\})\neq\psi(\{j\})$ for each $i,j\in\{n+1,\ldots,m\}$. Hence, since $m>3n$, for each $u\in\Z$ such that $|\min_{\Z}(u)|\leq n-1$, the set $\{j\in\{k,\ldots,m\}\mid j\mbox{ is even and } \psi(\{j\})\not\leq u\}$ is non-empty.
Letting $j_u=\max\{j\in\{k,\ldots,m\}\mid j\mbox{ is even and } \psi(\{j\})\not\leq u\}$, 
we define 
\[\textstyle u'=u\bigvee_{\Z}\{\psi(S\cup\{j_u\})\mid S=\emptyset\mbox{ or }(S\in(\pos{P}(s))_{n-1} \mbox{ and  }\psi(S)\leq u)\}.
\]
Since $|\min_{\Z}(u)\cup\min_{\Z}\psi(\{j_u\})|\leq n $ and  $\Z$ satisfies $\Cnstar$, the existence of $u'$ is granted.

Let $\nu\colon (\Z\odot\POne)_n\to\Y$ defined by:
$$
\nu(u,S)=\begin{cases}
\u(u)&\mbox{if }S=\bot;\\
x_{k}&\mbox{if }u=\bot;\\
\u(u')&\mbox{ otherwise}.
\end{cases}
$$
The equality $\nu(\min_{\Z\odot\POne}(u,S))=\min_{\Y}(\nu(u,S))$ follows from the definition of $u'$ and the fact that $\u$ is a $p$-morphism.
Let $(u,S),(w,T)\in(\Z\odot\POne)_n$ be such that $(u,S)\leq (w,T)$. If $S=\bot$, then $u,w\in\Y$, and  $\nu(u,S)=\u(u)\leq \u(w)\leq \nu(w,T)$. If $a=\{1\}$ and $u\neq\bot$ then $\nu(u,S)=\u(u')\leq \u(w')=\nu(w,T)$. If $u=\bot$ and $w\in \Y$, then $\nu(u,S)=x_k=\u(\psi(\{j_w\}))\leq \u(w\vee_{\Z}\psi(\{j_w\}))$. Since $m$ is odd, $m-1$ is even and $w\vee_{\Z}\psi(\{m-1\})\leq w'$. 
Therefore  $\nu$ is order preserving and a $p$-morphism.

It is straightforward to check that $\nu\circ\iota_{\Y}=\u$. Which implies that $\u\leqn \nu$. 

We  now prove $\u_{m+1}\leqn \nu$. Indeed we claim  $\nu\circ(\psi\odot{\rm Id}_{\POne})\circ\eta_{m,1}=\u_{m+1}$.
Let $T\in (\pos{P}(m+1))_n$. If $m+1\notin T$. Then 
\begin{align*}
(\nu\circ(\psi\odot{\rm Id}_{\POne})\circ\eta_{m,1})(T)&=
(\nu\circ(\psi\odot{\rm Id}_{\POne}))(T,\bot)\\
&=\nu(\psi(T),\bot)=\u_{m}(T)=\u_{m+1}(T).
\end{align*} 
If $T=\{m+1\}$, $$(\nu\circ(\psi\odot{\rm Id}_{\POne})\circ\eta_{m,1})(T)=\nu(\bot,\{1\})=x_k=f(m+1)=\nu_{m+1}(\{m+1\}).$$
If $m+1\in T$ and $T\neq\{m+1\}$, from the fact that $|T|\leq n$ and $m>3n$, it follows that the set $S=\{j\in\{n,\ldots,m\}\mid j\mbox{ even and } j\notin T\}\neq\emptyset$. Let $i=\max(S)$ and $T'=\{i\}\cup T\setminus\{m+1\}$. Thus
 \begin{align*}
(\nu\circ(\psi\odot{\rm Id}_{\POne})\circ\eta_{m,1})(T)&=
(\nu\circ(\psi\odot{\rm Id}_{\POne}))(T\setminus\{m+1\},\{1\})\\
&=\nu(\psi(T\setminus\{m+1\}),\{1\})\\
&=\u_{m}(T')=\u_{m+1}(T).
\end{align*} 

Combining (a), (b)  and Theorem~\ref{Theo:Baader}, it follows that ${\Type{}(\Un{\POS}(\X))=0}$.
\end{proof}

The statements and proofs of Lemmas~\ref{Lem:CaseAII}-\ref{Lem:CaseBIII} are similar to the statement and proof of Lemma~\ref{Lem:CaseAI}.  In each of the proofs of these lemmas, the most delicate part is to find a sequence of unifiers that satisfies  condition (i) in Theorem~\ref{Theo:Baader}. Proving that each sequence actually satisfies that condition is achieved with a similar  argument to the one used in Lemma~\ref{Lem:CaseAI}. Therefore, we shall only present these sequences of unifiers in each case and omit the details.

\begin{lemma}\label{Lem:CaseAII}
Let $\X$ be a poset in $\POn$. Assume there exist $Y\in\max(\Cn(\X))$,  $a,b\in Y$ and $k\in \N$ satisfying the following
\begin{itemize}
\item[(i)]  $\Y$ satisfies condition $(*_{k-1})$;
\item[(ii)]$|\min_{\Y}(a)\cup\min_{\Y}(b)|=k\leq n$;
\item[(iii)] $\min_{\Y}(a)\subsetneq \min_{\Y}(b)$;
\item[(iv)] if $c\in Y$ is such that $c\geq a, b$, then $\min_{\Y}(c)\neq\min_{\Y}(a)\cup\min_{\Y}(b)$.
\end{itemize}
Then $\Type{}(\Un{\POn}(\X))=0$.
\end{lemma}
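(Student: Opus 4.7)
The plan mirrors the proof of Lemma~\ref{Lem:CaseAI}: I would construct an ascending sequence $\u_1 \leqn \u_2 \leqn \cdots$ in $\Un{\POn}(\X)$ satisfying condition (i) of Theorem~\ref{Theo:Baader}, i.e., it has no common upper bound in $\Un{\POn}(\X)$ but every unifier bounding some $\u_m$ admits a further common upper bound with $\u_{m+1}$. What differs from Lemma~\ref{Lem:CaseAI} is the definition of the $\u_m$'s, since we must accommodate the strict inclusion $\min_\Y(a) \subsetneq \min_\Y(b)$ rather than the equality handled there.

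To this end, I would first enumerate $\min_\Y(b) = \{x_1, \dots, x_k\}$ choosing $x_{k-1} \in \min_\Y(a)$ and $x_k \in \min_\Y(b) \setminus \min_\Y(a)$, which is possible because $\min_\Y(a)$ is a nonempty proper subset of $\min_\Y(b)$. Then I would define $f \colon \N \to \{x_1, \dots, x_k\}$ by $f(i) = x_i$ for $i \leq k$, $f(i) = x_{k-1}$ for $i > k$ and $i$ odd, and $f(i) = x_k$ for $i > k$ and $i$ even, so that for $i > k$ the parity of $i$ records whether $f(i)$ lies in $\min_\Y(a)$ or in $\min_\Y(b) \setminus \min_\Y(a)$. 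Next I would define $\u_m \colon (\pos{P}(m))_n \to \X$ by a case analysis analogous to the one in Lemma~\ref{Lem:PDLC}: use $\bigvee_\Y f(T)$ whenever $|f(T)| < k$ (which is legitimate by (i)); use $a \vee_\Y \bigvee_\Y f(T)$ when $\min_\Y(a) \subseteq f(T) \subsetneq \min_\Y(b)$; use $a$ for maximum-size sets $T$ with $f(T) = \min_\Y(a)$ satisfying an odd-parity condition on the ``tail'' of $T$; and use $b$ for maximum-size sets $T$ with $f(T) = \min_\Y(b)$ satisfying the complementary even-parity condition. The crucial design choice, forced by condition (iv) --- which rules out a common upper bound of $a$ and $b$ in $Y$ with minimum set $\min_\Y(b)$, so in particular $a \not\leq b$ --- is to deploy $a$ and $b$ only at maximum-size sets, so that the situation $T \subsetneq T'$ with $\u_m(T) = a$ and $\u_m(T') = b$ never arises and monotonicity holds.

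Once $\u_m$ is a well-defined $p$-morphism (which reduces to routinely checking $\min_\X(\u_m(T)) = f(T)$ in each case) and the chain $\u_1 \leqn \u_2 \leqn \cdots$ is set up via the inclusion maps $(\pos{P}(m))_n \hookrightarrow (\pos{P}(m+1))_n$, the rest runs in parallel to Lemma~\ref{Lem:CaseAI}. For the absence of a common upper bound, I would take any $\u \colon \Z \to \X$ in $\Un{\POn}(\X)$ with $\u \circ \psi = \u_m$ and argue $|Z| \geq m - n$: if not, pigeonhole yields $i < j$ in $\{n+1, \dots, m\}$ with $\psi(\{i\}) = \psi(\{j\})$, and carefully chosen $T_1, T_2 \in (\pos{P}(m))_n$ with $\u_m(T_1) = a$ and $\u_m(T_2) = b$ satisfy $|\min_\Z \psi(T_1) \cup \min_\Z \psi(T_2)| \leq n$, so their join $x$ exists in $\Z$; then $\u(x) \geq a, b$ has $\min_\Y(\u(x)) = \min_\Y(a) \cup \min_\Y(b) = \min_\Y(b)$, contradicting (iv) together with the maximality of $Y$ in $\Cn(\X)$ via Lemma~\ref{Lem:UpSetNCon}. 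For the extension clause of Theorem~\ref{Theo:Baader}(i), I would invoke the $\odot$-construction to build $\nu \colon (\Z \odot \POne)_n \to \X$ bounding both $\u$ and $\u_{m+1}$, as in Lemma~\ref{Lem:CaseAI}. The principal obstacle is the combinatorial bookkeeping in the definition of $\u_m$: arranging the parity conditions and the ``top sets'' so that $\u_m$ is monotone despite $a \not\leq b$, yet still rich enough to force both $a$ and $b$ into the image of every purported upper bound.
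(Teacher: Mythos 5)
Your global strategy coincides with the paper's: build a chain $\u_1\leqn\u_2\leqn\cdots$ of unifiers on the posets $(\Pm)_n$, rule out upper bounds by a cardinality/pigeonhole argument that manufactures an element above both $a$ and $b$ with minimal set $\min_{\Y}(a)\cup\min_{\Y}(b)$ (contradicting (iv) and the maximality of $Y$ via Lemma~\ref{Lem:UpSetNCon}), and verify Theorem~\ref{Theo:Baader}(i) with the $(\Z\odot\POne)_n$ construction. The gap is in the definition of $\u_m$, which you correctly flag as the delicate point but do not actually resolve. Two concrete problems. First, your case analysis is not total: a set $T$ with $f(T)=\min_{\Y}(b)$ that is not of maximum size, or that fails your parity condition, receives no value. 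There is no safe default for such $T$: by (iv) there is no element of $Y$ above both $a$ and $b$ with minimal set $\min_{\Y}(b)$ to serve as a catch-all (unlike the element $z$ available in Lemma~\ref{Lem:PDLC}, whose template you are importing), and an arbitrary element with that minimal set would break monotonicity against the sets you do send to $b$. Second, the clause $a\vee_{\Y}\bigvee_{\Y}f(T)$ for $\min_{\Y}(a)\subseteq f(T)\subsetneq\min_{\Y}(b)$ is actively harmful here: such a $T$ lies below sets $T'$ with $f(T')=\min_{\Y}(b)$, so monotonicity forces $\u_m(T')\geq a$ with $\min_{\Y}(\u_m(T'))=\min_{\Y}(b)$; you cannot then also have $\u_m(T')\geq b$ (that is exactly what (iv) forbids), yet forcing $b$ into the image at such $T'$ is the whole point of the construction.

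The paper's construction sidesteps both issues by different choices. It enumerates $\min_{\Y}(b)=\{x_1,\dots,x_k\}$ with the repeated value $x_k$ chosen inside $\min_{\Y}(a)$ (not in $\min_{\Y}(b)\setminus\min_{\Y}(a)$ as you do), sets $f(i)=x_k$ for all $i>k$ with no parity, sends every $T$ with $f(T)=\min_{\Y}(b)$ to $b$, only the maximum-size $T$ with $f(T)=\min_{\Y}(a)$ to $a$, and everything else to $\bigvee_{\Y}f(T)$, which exists by (i) because then $|f(T)|<k$, and which lies below $a$ or $b$ as needed since it is a least upper bound of minimal elements below them. With that arrangement the map is total, $a$ never sits strictly below a $b$-valued set, and no $a\vee_{\Y}(\cdot)$ clause is required. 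The parity device belongs to the cases in which $\min_{\Y}(a)$ and $\min_{\Y}(b)$ do not by themselves determine which of $a,b$ to output (Lemmas~\ref{Lem:CaseAI} and~\ref{Lem:CaseAIII}); here the value of $f(T)$ already decides this, and introducing parity only creates the coverage problem above.
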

\begin{proof}
Let $x_1,\ldots,x_k$ be an enumeration of $\min_{\Y}(a)\cup\min_{\Y}(b)$.
Without loss of generality assume $\min_{\Y}(a)\subsetneq\min_{\Y}(b)$ and  $x_k\in \min_{\Y}(a)$.

Let $f\colon \N\to \{x_1,\ldots,x_k\}$ be defined by 
$$
f(i)=\begin{cases}
x_i& \mbox{if }i\leq k;\\
x_k&\mbox{if } i> k.
 \end{cases}
$$
For each $m\in\N$ we define the maps $\u_{m}\colon(\pos{P}(m))_n\to \X$ as follows:
$$
\u_m(T)=\begin{cases}
a&\mbox{if }f(T)=\min_{\Y}(x)\mbox{ and }|T|=n;\\
b&\mbox{if }f(T)=\min_{\Y}(y);\\
\bigvee_{\Y}(f(T))&
\mbox{otherwise}.
\end{cases}
$$
A similar argument to the one used in the proof of Lemma~\ref{Lem:CaseAI} proves that the sequence $\u_1,\u_2,\ldots$ satisfies condition (i) of Theorem~\ref{Theo:Baader}. From this we conclude that  $\Type{}(\Un{\POn}(\X))=0$.
\end{proof}

\begin{lemma}\label{Lem:CaseAIII}
Let $\X$ be a poset in $\POn$. Assume there exist $Y\in\max(\Cn(\X))$,  $a,b\in Y$ and $k\in \N$ satisfying the following
\begin{itemize}
\item[(i)]  $\Y$ satisfies condition $(*_{k-1})$;
\item[(ii)]$|\min_{\Y}(a)\cup\min_{\Y}(b)|=k\leq n$;
\item[(iii)] $\min_{\Y}(a)\not\subseteq \min_{\Y}(b)$ and $\min_{\Y}(b)\not\subseteq \min_{\Y}(a)$;
\item[(iv)] if $c\in Y$ is such that $c\geq a, b$, then $\min_{\Y}(c)\neq\min_{\Y}(a)\cup\min_{\Y}(b)$.
\end{itemize}
Then $\Type{}(\Un{\POn}(\X))=0$.
\end{lemma}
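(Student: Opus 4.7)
The plan is to adapt the template of Lemma~\ref{Lem:CaseAI} (the $\POn$-analog of Lemma~\ref{Lem:PDLA}) by constructing an increasing chain $\u_1\leqn\u_2\leqn\cdots$ of unifiers $\u_m\colon(\Pm)_n\to\X$ with image in $Y$, then showing it satisfies condition (i) of Theorem~\ref{Theo:Baader}, so that $\Type{}(\Un{\POn}(\X))=0$ follows.

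Using (iii), fix an enumeration $x_1,\ldots,x_k$ of $\min_{\Y}(a)\cup\min_{\Y}(b)$ with $x_{k-1}\in\min_{\Y}(a)\setminus\min_{\Y}(b)$ and $x_k\in\min_{\Y}(b)\setminus\min_{\Y}(a)$, and define $f\colon\N\to\{x_1,\ldots,x_k\}$ by $f(i)=x_i$ for $i\leq k$, $f(i)=x_{k-1}$ for odd $i>k$, and $f(i)=x_k$ for even $i>k$; this encodes the parity trick of Lemma~\ref{Lem:PDLA}. By Definition~\ref{Def:n-con}(ii) and $k\leq n$, fix $z\in Y$ with $\min_{\Y}(z)=\{x_1,\ldots,x_k\}$; by (iv) such $z$ cannot simultaneously lie above $a$ and $b$. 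The unifiers are then defined by
\[
\u_m(T)=\begin{cases}
a&\mbox{if } f(T)=\min_{\Y}(a) \mbox{ and } |T|=n;\\
b&\mbox{if } f(T)=\min_{\Y}(b) \mbox{ and } |T|=n;\\
z&\mbox{if } f(T)=\{x_1,\ldots,x_k\};\\
\bigvee_{\Y}f(T)&\mbox{otherwise},
\end{cases}
\]
with the final join well-defined since (i) supplies $(*_{k-1})$ and $|f(T)|\leq k-1$ in the remaining case. Because any $T$ mapped to $a$ or $b$ has $|T|=n$ and hence no strict superset in $(\Pm)_n$, no monotonicity conflict arises from the fact that $z$ may fail to dominate both $a$ and $b$.

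With the sequence in hand, the two standard claims follow by the arguments of Lemma~\ref{Lem:CaseAI}. For claim (a) (that any unifier $\u\colon\Z\to\X$ with $\u_m\leqn\u$ satisfies $|Z|$ growing with $m$), a collision $\psi(\{i\})=\psi(\{j\})$ for any $\psi\colon(\Pm)_n\to\Z$ witnessing $\u_m=\u\circ\psi$ forces $f(i)=f(j)$, hence $i$ and $j$ have the same parity; then applying $\Cnstar$ in $\Z$ to $\psi(T_a),\psi(T_b)$ for suitably chosen size-$n$ sets $T_a,T_b$ (of the $a$-type and $b$-type respectively) yields an element of $\X$ above both $a$ and $b$ with minimal set $\{x_1,\ldots,x_k\}$, contradicting either (iv) or, via Lemma~\ref{Lem:UpSetNCon}, the maximality of $Y$ in $\Cn(\X)$. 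For claim (b), an upper bound of $\u_{m+1}$ and any unifier above $\u_m$ is built by extending along the special product $(\Z\odot\POne)_n$ using Theorems~\ref{Theo:OdotStar}(iii) and~\ref{Theo:Coproduct} and the isomorphism $\eta_{m,1}$ of Example~\ref{Ex:Eta}, exactly as in Lemma~\ref{Lem:CaseAI}. Theorem~\ref{Theo:Baader}(i) then delivers the conclusion.

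The principal obstacle, and the reason this case is distinct from Lemmas~\ref{Lem:CaseAI} and~\ref{Lem:CaseAII}, is the definition on $T$ with $f(T)=\{x_1,\ldots,x_k\}$: neither $a$ nor $b$ can be used directly (since $\min_{\Y}(a)\neq\{x_1,\ldots,x_k\}\neq\min_{\Y}(b)$), and by (iv) no element of $Y$ above both $a,b$ has that minimal set. The remedy is to absorb these $T$ into a single $z$ with the correct minimal set, relying on the $|T|=n$ cutoff for $a,b$ to rule out any subset–superset pair that would demand $z\geq a$ and $z\geq b$ simultaneously; from that point the proof proceeds mechanically along the lines of Lemma~\ref{Lem:CaseAI}.
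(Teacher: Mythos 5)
Your proposal matches the paper's proof essentially exactly: the same enumeration with $x_{k-1}\in\min_{\Y}(a)\setminus\min_{\Y}(b)$ and $x_k\in\min_{\Y}(b)\setminus\min_{\Y}(a)$, the same parity-based $f$ (the paper starts the parity rule at $i>k-2$ rather than $i>k$, an immaterial difference), the same element $z$ obtained from $n$-connectedness, the same four-case definition of $\u_m$ on $(\Pm)_n$ sending full-size $a$-type and $b$-type sets to $a$ and $b$ and absorbing the rest into $z$ or $\bigvee_{\Y}f(T)$, and the same deferral of claims (a) and (b) to the argument of Lemma~\ref{Lem:CaseAI}. Your explicit remark that the $|T|=n$ cutoff removes any monotonicity conflict above $a$ and $b$ is a point the paper leaves implicit, but it is the same construction.
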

\begin{proof}
Let $x_1,\ldots,x_k$ be an enumeration of $\min_{\Y}(a)\cup\min_{\Y}(b)$.
Without loss of generality, assume that $x_{k-1}\in\min_{\Y}(a)\setminus\min_{\Y}(b)$ and 
$x_k\in\min_{\Y}(b)\setminus\min_{\Y}(a)$. Since $Y$ is $n$-connected there exists $z\in Y$ such that $\min_{\Y}(z)=\{x_1,\ldots,x_k\}$.

Let $f\colon \N\to \{x_1,\ldots,x_k\}$ be defined by 
\[
f(i)=\begin{cases}
x_i& \mbox{if }i\leq k-2\\
x_{k-1}&\mbox{if } i> k-2\mbox{ and } i\mbox{ is odd},\\
x_k&\mbox{if } i> k-2\mbox{ and } i\mbox{ is even}.\\
 \end{cases}
\]

For each $m\in\N$ we define the maps $\u_{m}\colon(\pos{P}(m))_n\to \X$ as follows:
$$
\u_m(T)=\begin{cases}
a&\mbox{if }f(T)=\min_{\Y}(x)\mbox { and } |T|=n;\\
b&\mbox{if }f(T)=\min_{\Y}(y)\mbox { and } |T|=n;\\
z&\mbox{if }f(T)=\min_{\Y}(y)=\{x_1,\ldots,x_k\};\\
\bigvee_{\Y}(f(T))&
\mbox{otherwise}.
\end{cases}
$$
The proof now follows similarly to the proof of Lemma~\ref{Lem:CaseAI}.
\end{proof}

\begin{lemma}\label{Lem:CaseBI}
Let $\X$ be a poset in $\POn$. Assume there exist $Y\in\max(\Cn(\X))$,  ${a,b,c,d\in Y}$ and $k\in \N$ satisfying the following
\begin{itemize}
\item[(i)] $\Y$ satisfies condition $(*_{k-1})$;
\item[(ii)]$|\min_{\Y}(a)\cup\min_{\Y}(b)|=k\leq n$;
\item[(iii)] $\min_{\Y}(a)= \min_{\Y}(b)$;
\item[(iv)] $\min_{\Y}(c)=\min_{\Y}(d)=\min_{\Y}(a)\cup\min_{\Y}(b)$;
\item[(v)] $a,b\leq c,d$;
\item[(vi)] there is no $e\in Y$ such that $a,b\leq e\leq c,d$.
\end{itemize}
Then $\Type{}(\Un{\POn}(\X))=0$.
\end{lemma}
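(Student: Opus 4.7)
The plan is to adapt the construction of Lemma~\ref{Lem:PDLB} to the $\POn$ setting, in essentially the same way that Lemma~\ref{Lem:CaseAI} adapts Lemma~\ref{Lem:PDLA}. Fix an enumeration $x_1,\ldots,x_k$ of $\min_{\Y}(a)=\min_{\Y}(b)$ and the auxiliary map $f\colon\N\to\{x_1,\ldots,x_k\}$ with $f(i)=x_i$ for $i\leq k$ and $f(i)=x_k$ otherwise, exactly as in Lemma~\ref{Lem:CaseAI}. I would build an increasing sequence of $p$-morphisms $\u_m\colon(\pos{P}(m))_n\to\X$ whose image lies in $Y$ and is defined so that: (a) subsets $T$ with $f(T)\subsetneq\{x_1,\ldots,x_k\}$ are sent to $\bigvee_{\Y}f(T)$, which exists by hypothesis~(i); (b) subsets $T$ with $f(T)=\{x_1,\ldots,x_k\}$ are sent to $a$, $b$, $c$ or $d$ according to a parity pattern of the indices of $T$ outside $\{1,\ldots,k-1\}$, in direct analogy with Lemma~\ref{Lem:PDLB}. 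Concretely, a single odd/even parity selects between $a$ and $b$, while a two-index parity pattern selects between $c$ and $d$; the sizes of the relevant subsets are chosen so that they all fit within the $\POn$ bound $|T|\leq n$.

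Once the $\u_m$ are defined, verifying they are well-defined $p$-morphisms is routine: monotonicity uses $a,b\leq c,d$ from~(v), and the $\min$-preservation follows from the definition of $f$ together with hypothesis~(iv). The inclusion maps $(\pos{P}(m))_n\hookrightarrow(\pos{P}(m+1))_n$ give $\u_m\leqn\u_{m+1}$. To show that this chain has no upper bound in $\Un{\POn}(\X)$, I would suppose $\u\colon\Z\to\X$ is one and let $\psi\colon(\pos{P}(m))_n\to\Z$ witness $\u_m\leqn\u$ with $m$ large. If $\psi$ identifies two indices that the construction places in opposite parity classes on subsets $T_1,T_2$ mapping to $\{a,b\}$, then the element $x=\psi(T_1)\vee_{\Z}\psi(T_2)$ exists by $\Cnstar$ (the union of minimal sets has cardinality at most $n$) and satisfies $a,b\leq\u(x)\leq c,d$; by~(vi) we have $\u(x)\notin Y$, and since $\min_{\X}(\u(x))=\min_{\X}(c)$, Lemma~\ref{Lem:UpSetNCon} shows $Y\cup\{\u(x)\}\in\Cn(\X)$, contradicting the maximality of $Y$. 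Hence $|Z|$ must grow linearly with $m$, so the chain admits no upper bound. For any such $\u$ and any $m$, a common upper bound of $\u$ and $\u_{m+1}$ is then produced from $(\Z\odot\POne)_n$ by the $\iota_{\Z}$-and-$\eta_{m,1}$ recipe at the end of the proof of Lemma~\ref{Lem:CaseAI}, so Theorem~\ref{Theo:Baader}(i) gives $\Type{}(\Un{\POn}(\X))=0$.

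The main obstacle I expect is the bookkeeping for the edge case $k=n$: subsets of $\{1,\ldots,m\}$ of size strictly greater than $n$ are forbidden by the $\POn$ restriction, so the two-index parity pattern that distinguishes $c$ from $d$ in Lemma~\ref{Lem:PDLB} must be compressed into subsets of size at most $n$ that simultaneously accommodate the single-index parity pattern distinguishing $a$ from $b$. Once this combinatorial arrangement is pinned down, the remaining monotonicity, $\min$-preservation, and upper-bound verifications are routine adaptations of those in Lemma~\ref{Lem:CaseAI}.
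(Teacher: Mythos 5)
Your high-level plan coincides with the paper's, and you have correctly located the crux of the argument; but you leave that crux unresolved, and it cannot be resolved by the means you propose. The difficulty is not confined to "bookkeeping for the edge case $k=n$": it is structural. Since any $p$-morphism $\u_m$ out of $(\pos{P}(m))_n$ must satisfy $\min_{\X}(\u_m(T))=f(T)$, every $T$ with $\u_m(T)\in\{a,b,c,d\}$ must satisfy $f(T)=\{x_1,\ldots,x_k\}$ and hence $|T|\geq k$. When $k=n$ this forces $|T|=n$ exactly, so all candidate preimages of $a,b,c,d$ are pairwise incomparable $n$-element sets: there is no way to realise $a,b\leq c,d$ through containment of subsets, and the key step of the non-boundedness argument --- capping $\psi(T_1)\vee_{\Z}\psi(T_2)$ by $\psi(T_1\cup T_2)$ with $\u_m(T_1\cup T_2)\in\{c,d\}$ --- collapses because $T_1\cup T_2\notin(\pos{P}(m))_n$. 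The paper's resolution is not a rearrangement inside $(\pos{P}(m))_n$ but a change of domain: it takes $\pos{Q}(m)=(\pos{P}(m))_n\times\pos{2}\times\pos{2}$, where $\pos{2}=(\{0,1\},\leq)$. This poset has the same minimal elements as $(\pos{P}(m))_n$ (namely $(\{i\},0,0)$), so it lies in $\POn$ and satisfies $\Cnstar$, yet it supplies two extra levels above each size-$n$ set on which to place $c$ at $(1,0)$, $d$ at $(0,1)$ and a common upper bound $z$ at $(1,1)$, with $a,b$ placed at $(0,0)$ according to the single-index parity. Some such modification of the domain is essential, and your proposal as written does not supply it.

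A secondary omission: your construction tacitly uses a catch-all value $z\in Y$ with $c,d\leq z$ (needed so that the ``otherwise'' clause of $\u_m$ is monotone above the sets mapping to $c$ and $d$). Such a $z$ need not exist; the paper disposes of that case first by observing that if no $z\geq c,d$ exists in $Y$, then the pair $c,d$ itself satisfies the hypotheses of Lemma~\ref{Lem:CaseAI} (condition (iv) there holding vacuously), which already yields $\Type{}(\Un{\POn}(\X))=0$. Your reduction to the chain-without-upper-bound argument and the use of $(\Z\odot\POne)_n$ with $\eta_{m,1}$ to verify condition (i) of Theorem~\ref{Theo:Baader} are otherwise in line with the paper.
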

\begin{proof}
Let $x_1,\ldots,x_k$ be an enumeration of the elements of $\min_{\Y}(a)=\min_{\Y}(b)$. If there is no $z\in Y$ such that $c,d\leq z$, then the result follows by an application of Lemma~\ref{Lem:CaseAI}. Therefore, we can assume there exists $z\in Y$ such that $c,d\leq z$.

Let $f\colon \N\to \{x_1,\ldots,x_k\}$ be defined by 
$$
f(i)=\begin{cases}
x_i& \mbox{if }i\leq k-1,\\
x_{k-1}&\mbox{if }  k-1<i<n,\\
x_{k}&\mbox{if }  n\leq i,\\
 \end{cases}
$$

For this particular case, a slightly different class of posets satisfying $\Cnstar$ is needed. Let $\pos{2}=(\{0,1\},\leq)$ denote the poset such that $0<1$. For each $m\in\N$, let $\pos{Q}(m)$ denote the poset $(\pos{P}(m))_n\times\pos{2}\times \pos{2}$. If   $(U,v,w)\in\pos{Q}(m)$, then $|U|\leq n$ and
$\min_{\pos{Q}(m)}(U,v,w)=\{(\{n\},0,0)\mid n\in U\}
$. Therefore $\pos{Q}(m)\in\POn$ and it satisfies $\Cnstar$.
We now define the sequence of unifiers $\u_{m}\colon\pos{Q}(m)\to \X$ as follows:
$$
\u_m(T,v,w)=\begin{cases}
\bigvee_{\Y}f(T)&\mbox{if }f(T)\neq\{x_1,\ldots,x_k\};\\
a&\mbox{if } T=\{1,\ldots,n-1\}\cup\{i\}, i\geq n \mbox{  is odd}\\
&\mbox{ and } (v,w)=(0,0)\\
b&\mbox{if } T=\{1,\ldots,n-1\}\cup\{i\}, i\geq n \mbox{  is even,}\\
&\mbox{ and } (v,w)=(0,0)\\
c&\mbox{if }T=\{1,\ldots,n-1\}\cup\{i\}, i\geq n  \\
&\mbox{ and } (v,w)=(1,0)\\
d&\mbox{if }T=\{1,\ldots,n-1\}\cup\{i\}, i\geq n  \\
&\mbox{ and } (v,w)=(0,1)\\
z&
\mbox{otherwise}.
\end{cases}
$$
\end{proof}

\begin{lemma}\label{Lem:CaseBII}
Let $\X$ be a poset in $\POn$. Assume there exist $Y\in\max(\Cn(\X))$,  ${a,b,c,d\in Y}$ and $k\in \N$ satisfying the following
\begin{itemize}
\item[(i)] $\Y$ satisfies condition $(*_{k-1})$;
\item[(ii)]$|\min_{\Y}(a)\cup\min_{\Y}(b)|=k\leq n$;
\item[(iii)] $\min_{\Y}(a)\subsetneq \min_{\Y}(b)$;
\item[(iv)] $\min_{\Y}(c)=\min_{\Y}(d)=\min_{\Y}(a)\cup\min_{\Y}(b)$;
\item[(v)] $a,b\leq c,d$;
\item[(vi)] there is no $e\in Y$ such that $a,b\leq e\leq c,d$.
\end{itemize}
Then $\Type{}(\Un{\POn}(\X))=0$.
\end{lemma}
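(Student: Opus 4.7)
The plan is to mirror Lemma~\ref{Lem:CaseBI}, adapting the construction to the asymmetric hypothesis $\min_{\Y}(a)\subsetneq\min_{\Y}(b)$. As announced in the paragraph following Lemma~\ref{Lem:CaseBI}, it suffices to exhibit an increasing sequence $\u_1\leqn\u_2\leqn\cdots$ of unifiers in $\Un{\POn}(\X)$; the verification that the sequence satisfies condition~(i) of Theorem~\ref{Theo:Baader} then proceeds by direct analogy with Lemma~\ref{Lem:CaseAI}.

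First I would enumerate $\min_{\Y}(b)=\{x_1,\ldots,x_k\}$ so that $\min_{\Y}(a)=\{x_1,\ldots,x_{k-1}\}$ and $x_k$ is the distinguishing element. If no $z\in Y$ satisfies $c,d\leq z$, then conditions (i)--(iv) of Lemma~\ref{Lem:CaseAII} already hold (with $c$ playing the role of the larger element), giving the conclusion; so I assume such a $z$ exists. Define $f\colon\N\to\{x_1,\ldots,x_k\}$ by
\[
f(i)=\begin{cases}
x_i & \mbox{if }i\leq k-1;\\
x_{k-1} & \mbox{if }k-1<i<n,\mbox{ or }i\geq n\mbox{ and }i\mbox{ odd};\\
x_k & \mbox{if }i\geq n\mbox{ and }i\mbox{ even,}
\end{cases}
\]
so that $f(\{1,\ldots,n-1\}\cup\{i\})=\min_{\Y}(a)$ for odd $i\geq n$ and $f(\{1,\ldots,n-1\}\cup\{i\})=\min_{\Y}(b)$ for even $i\geq n$. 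This parity split of the minimum-sets is the key new ingredient compared to Lemma~\ref{Lem:CaseBI}.

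Reusing the source poset $\pos{Q}(m)=(\pos{P}(m))_n\times\pos{2}\times\pos{2}$ introduced in Lemma~\ref{Lem:CaseBI} (which lies in $\POn$ and satisfies $\Cnstar$), I would define $\u_m\colon\pos{Q}(m)\to\X$ by
\[
\u_m(T,v,w)=\begin{cases}
a & \mbox{if }T=\{1,\ldots,n-1\}\cup\{i\},\ i\geq n\mbox{ odd};\\
b & \mbox{if }T=\{1,\ldots,n-1\}\cup\{i\},\ i\geq n\mbox{ even, }(v,w)=(0,0);\\
c & \mbox{if }T=\{1,\ldots,n-1\}\cup\{i\},\ i\geq n\mbox{ even, }(v,w)=(1,0);\\
d & \mbox{if }T=\{1,\ldots,n-1\}\cup\{i\},\ i\geq n\mbox{ even, }(v,w)=(0,1);\\
\bigvee_{\Y}f(T) & \mbox{if }f(T)\subsetneq\{x_1,\ldots,x_k\};\\
z & \mbox{otherwise.}
\end{cases}
\]
Hypothesis~(i) provides the joins in the penultimate clause with the correct minima, while (iv)--(v) together with the choice of $z$ guarantee order-preservation across $(v,w)$-transitions. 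The relation $\min_{\X}\u_m(T,v,w)=f(T)$ holds by the design of $f$, making each $\u_m$ a $p$-morphism. The natural inclusions $\pos{Q}(m)\hookrightarrow\pos{Q}(m+1)$ witness $\u_m\leqn\u_{m+1}$.

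The principal obstacle, as in Lemma~\ref{Lem:CaseBI}, is to establish condition~(i) of Theorem~\ref{Theo:Baader}. I would follow the pattern of Lemma~\ref{Lem:CaseAI}: supposing $\psi\colon\pos{Q}(m)\to\Z$ witnesses $\u_m\leqn\u$ with $|Z|$ too small, pigeonhole forces a collapse among the $\psi$-images of the top-level elements $(\{1,\ldots,n-1,i\},0,0)$ for $i\geq n$ of one parity; coupled with a companion of the opposite parity, one forms in $\Z$ a $\Cnstar$-join whose $\u$-image lies above both $a$ and $b$, below both $c$ and $d$, and carries minimum-set $\{x_1,\ldots,x_k\}$. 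By (vi) this image is not in $Y$, and Lemma~\ref{Lem:UpSetNCon} then adjoins it to $Y$ within $\Cn(\X)$, contradicting the maximality of $Y$. For the joint-majorant step one imitates the $(\Z\odot\POne)_n$-construction of Lemma~\ref{Lem:CaseBI}, using the odd/even alternation in $f$ to absorb the new index $m+1$. Theorem~\ref{Theo:Baader}(i) then delivers $\Type{}(\Un{\POn}(\X))=0$.
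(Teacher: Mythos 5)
There is a genuine gap, and it is fatal to the construction rather than to a verification detail: the sequence $\u_1\leqn\u_2\leqn\cdots$ you propose is in fact bounded above in $\Un{\POn}(\X)$, so it cannot satisfy condition (i) (or (ii)) of Theorem~\ref{Theo:Baader}. Concretely, let $g\colon\{1,\ldots,m\}\to\{1,\ldots,n+1\}$ fix $1,\ldots,n-1$ and send each $i\geq n$ to whichever of $n,n+1$ has the same parity as $i$. The induced map $\hat g\colon\pos{Q}(m)\to\pos{Q}(n+1)$, $(T,v,w)\mapsto(g(T),v,w)$, is a $p$-morphism; since your $f$ satisfies $f\circ g=f$ and your case distinction for $\u_m(T,v,w)$ depends only on $f(T)$, on whether $T$ has the form $\{1,\ldots,n-1\}\cup\{i\}$, on the parity of $i$, and on $(v,w)$ --- never on the relative order of the indices in $T\cap\{n,\ldots,m\}$ --- one checks that $\u_{n+1}\circ\hat g=\u_m$ for every $m$, whence $\u_m\leqn\u_{n+1}$ for all $m$. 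Relatedly, your pigeonhole step cannot produce the sandwich element $a,b\leq\u(x)\leq c,d$: because $\min_{\Y}(a)\subsetneq\min_{\Y}(c)$ here, the $p$-morphism constraint forbids any element of $\pos{Q}(m)$ above an $a$-preimage from mapping to $c$ or $d$ (all four elements of a fibre $\{(T,v,w)\mid v,w\in\{0,1\}\}$ must have images with min-set $f(T)$), so --- unlike in Lemma~\ref{Lem:CaseBI}, where $\min_{\Y}(a)=\min_{\Y}(b)=\min_{\Y}(c)=\min_{\Y}(d)$ makes the $\pos{2}\times\pos{2}$ encoding viable --- the join in $\Z$ of an $a$-preimage and a $b$-preimage has no upper bounds mapping to $c$ or to $d$, and no collapse can create them, since $\psi(p)=\psi(q)$ forces $\u_m(p)=\u_m(q)$.

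The paper avoids both problems by working on $(\pos{P}(m))_n$ and realising $c$ and $d$ as images of the two-element extensions $\{1,\ldots,n-2\}\cup\{i,j\}$ with $i$ odd and $j$ even, distinguished by whether $i\leq j$ or $j\leq i$ (exactly as in Lemma~\ref{Lem:PDLC}): the sets mapping above $a$ and to $b$ then genuinely lie below preimages of both $c$ and $d$, and the dependence on the relative order of $i$ and $j$ is precisely what blocks the parity-collapsing factorisation described above. Your reduction to Lemma~\ref{Lem:CaseAII} when no $z\geq c,d$ exists and your overall Theorem~\ref{Theo:Baader}(i) strategy are fine; it is the choice of source poset and of $\u_m$ that must be changed.
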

\begin{proof}
As in Lemma~\ref{Lem:CaseBII}, we can assume there exists $z\in Y$ such that $c,d\leq z$.

Let $x_1,\ldots,x_k$ be an enumeration of the elements of $\min_{\Y}(b)$. 
Without loss of generality assume that  $x_{n-1}\in\min_{\Y}(x)$ and $x_{n}\in\min_{\Y}(y)\setminus\min_{\Y}(x)$.
Let $f\colon \N\to \{x_1,\ldots,x_n\}$ be defined by 
$$
f(i)=\begin{cases}
x_i& \mbox{if }i\leq k-2\\
x_{k-2}& \mbox{if }k-2<i\leq n-2\\
x_{n-1}&\mbox{if } i> n\mbox{ and } i\mbox{ is odd},\\
x_n&\mbox{if } i> n\mbox{ and } i\mbox{ is even}.\\
 \end{cases}
$$
For each $m\in\N$ we define the unifiers $\u_{m}\colon(\pos{P}(m))_n\to \Y$ as follows:
$$
\u_m(T)=\begin{cases}
\bigvee_{\Y}f(T)&\mbox{if }\min_{\Y}(a)\not\subseteq f(T)\mbox{ or }T\subsetneq\{1,\dots,n\}\\
a\vee_{\Y}\bigvee_{\Y}f(T)&\mbox{if }\min_{\Y}(a)\subseteq f(T)\mbox{ and }f(T)\neq\{x_1,\ldots,x_n\}\\
&\mbox{ and }T\not\subseteq\{1,\dots,n\}\\
b&\mbox{if }T=\{1,\ldots,n-1\}\cup\{i\} \mbox{ for some even } i\geq n\\
c&\mbox{if }T=\{1,\ldots,n-2\}\cup\{i,j\},\\
&\mbox{ for some  }i\mbox{ odd and }j\mbox{ even, such that } n\leq i\leq j; \\
d&\mbox{if }T=\{1,\ldots,n-2\}\cup\{i,j\},\\
&\mbox{ for some  }i\mbox{ odd and }j\mbox{ even, such that } n\leq j\leq i; \\
z&
\mbox{otherwise}.
\end{cases}
$$
\end{proof}

\begin{lemma}\label{Lem:CaseBIII}
Let $\X$ be a poset in $\POn$. Assume there exist $Y\in\max(\Cn(\X))$,  ${a,b,c,d\in Y}$ and $k\in \N$ satisfying the following
\begin{itemize}
\item[(i)] $\Y$ satisfies condition $(*_{k-1})$;
\item[(ii)]$|\min_{\Y}(a)\cup\min_{\Y}(b)|=k\leq n$;
\item[(iii)] $\min_{\Y}(a)\not\subseteq \min_{\Y}(b)$ and $\min_{\Y}(b)\not\subseteq \min_{\Y}(a)$;
\item[(iv)] $\min_{\Y}(c)=\min_{\Y}(d)=\min_{\Y}(a)\cup\min_{\Y}(b)$;
\item[(v)] $a,b\leq c,d$;
\item[(vi)] there is no $e\in Y$ such that $a,b\leq e\leq c,d$.
\end{itemize}
Then $\Type{}(\Un{\POn}(\X))=0$.
\end{lemma}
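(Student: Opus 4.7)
My plan is to mirror the proofs of Lemmas~\ref{Lem:CaseBI} and \ref{Lem:CaseBII}: construct an increasing sequence of unifiers $\u_1\leqn\u_2\leqn\cdots$ in $\Un{\POn}(\X)$ satisfying condition~(i) of Theorem~\ref{Theo:Baader}, and argue that the detailed verification is the same as in Lemma~\ref{Lem:CaseAI}. The combinatorial novelty is that I now have \emph{both} the parity obstruction from the incomparable case (as in Lemma~\ref{Lem:CaseAIII}) \emph{and} the $c,d$-branching obstruction (as in Lemmas~\ref{Lem:CaseBI}, \ref{Lem:CaseBII}), so the domain poset has to remember two kinds of information at once.

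First, I reduce to the case where some $z\in Y$ satisfies $c,d\leq z$ and $\min_{\Y}(z)=\min_{\Y}(a)\cup\min_{\Y}(b)$. If no such $z$ exists, then $c,d$ themselves witness hypotheses (i)--(iv) of Lemma~\ref{Lem:CaseAI} (with $k$ unchanged and with $\min_{\Y}(c)=\min_{\Y}(d)$), so Lemma~\ref{Lem:CaseAI} already gives the conclusion. Using (iii), fix an enumeration $x_1,\dots,x_k$ of $\min_{\Y}(a)\cup\min_{\Y}(b)$ with $x_{k-1}\in\min_{\Y}(a)\setminus\min_{\Y}(b)$ and $x_k\in\min_{\Y}(b)\setminus\min_{\Y}(a)$, and define $f\colon\N\to\{x_1,\dots,x_k\}$ by $f(i)=x_i$ for $i\leq k-2$, $f(i)=x_{k-1}$ for $i>k-2$ odd, and $f(i)=x_k$ for $i>k-2$ even, exactly as in Lemma~\ref{Lem:CaseAIII}.

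As in Lemma~\ref{Lem:CaseBI}, I take the domain of $\u_m$ to be $\pos{Q}(m)=(\pos{P}(m))_n\times\pos{2}\times\pos{2}$, which lies in $\POn$ and satisfies $\Cnstar$. I then define $\u_m\colon\pos{Q}(m)\to\X$ by: send $(T,v,w)$ to $\bigvee_{\Y}f(T)$ whenever $f(T)\subsetneq\{x_1,\dots,x_k\}$ (this exists by (i) using $\Y\models(*_{k-1})$); on the remaining inputs $f(T)=\{x_1,\dots,x_k\}$ use the $\pos{2}\times\pos{2}$ factor together with the parity pattern of $\min(T\setminus\{1,\dots,k-2\})$ to select among $a$, $b$, $c$, $d$, or $z$, in the manner of Lemma~\ref{Lem:CaseBI}: $(v,w)=(1,0)\mapsto c$, $(0,1)\mapsto d$, and $(0,0)$ branches to $a$ or $b$ according to parity as in Lemma~\ref{Lem:CaseAIII}, with $z$ assigned on the bulk of $(T,v,w)$ where the corresponding join would not coincide with any of $a,b,c,d$. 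The inclusions of the various $\pos{Q}(m)$ give $\u_m\leqn \u_{m+1}$.

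The remaining obstacle is to verify that $\{\u_m\}$ satisfies Theorem~\ref{Theo:Baader}(i). For the absence of upper bounds one mimics Lemma~\ref{Lem:CaseAI} claim~(a): given $\psi\colon\pos{Q}(m)\to\Z$ with $\u\circ\psi=\u_m$ and $|Z|$ too small, two inputs of the right parity shapes and opposite $c$-/$d$-flags would be glued, producing by $\Cnstar$ an element of $\Z$ whose image under $\u$ sits strictly between $\{a,b\}$ and $\{c,d\}$ with the correct minimal set, contradicting (vi) together with the maximality of $Y$ in $\Cn(\X)$ via Lemma~\ref{Lem:UpSetNCon}. For the extension property (claim~(b) of Lemma~\ref{Lem:CaseAI}) one uses the $\Z\odot\POne$ construction and Theorem~\ref{Theo:OdotStar}(iii) verbatim, replacing the even-$j$ choice with one that is consistent with both the parity and the flag layers. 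Since all the bookkeeping is identical to what is already done explicitly in Lemmas~\ref{Lem:CaseAI} and~\ref{Lem:CaseBI}, following the convention of the section I would, like the author, present only the sequence $\{\u_m\}$ and omit the rest; the conclusion $\Type{}(\Un{\POn}(\X))=0$ then follows from Theorem~\ref{Theo:Baader}(i).
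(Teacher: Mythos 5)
Your reduction to the existence of $z\geq c,d$ with $\min_{\Y}(z)=\min_{\Y}(a)\cup\min_{\Y}(b)$ via Lemma~\ref{Lem:CaseAI} is sound (it is the same move the paper makes explicitly in Lemmas~\ref{Lem:CaseBI} and~\ref{Lem:CaseBII}), and your choice of enumeration and of $f$ agrees with the paper's. The construction of $\u_m$, however, contains a step that fails: you propose to send inputs $(T,0,0)$ with $f(T)=\{x_1,\dots,x_k\}$ to $a$ or $b$ according to parity. Any such $T$ contains both an odd and an even index beyond $k-2$, so the minimal elements of $\pos{Q}(m)$ below $(T,0,0)$ are mapped onto all of $\{x_1,\dots,x_k\}$; the $p$-morphism condition would then force $\min_{\Y}(a)=\{x_1,\dots,x_k\}$, contradicting hypothesis (iii), which makes $\min_{\Y}(a)$ and $\min_{\Y}(b)$ proper subsets of $\min_{\Y}(a)\cup\min_{\Y}(b)$. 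Hitting $a$ and $b$ directly with full-$f(T)$ inputs flagged by the $\pos{2}\times\pos{2}$ factor is borrowed from Lemma~\ref{Lem:CaseBI}, but it is legitimate there only because in that lemma $\min_{\Y}(a)=\min_{\Y}(b)=\min_{\Y}(c)=\min_{\Y}(d)$.

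The paper's proof avoids this by working on the plain domain $(\Pm)_n$ and never mapping a full-$f(T)$ input to $a$ or $b$: the elements sitting below $c,d$ in the image are the joins $a\vee_{\Y}\bigvee_{\Y}f(T)$ (when $\min_{\Y}(a)\subseteq f(T)\neq\{x_1,\dots,x_k\}$) and $b\vee_{\Y}\bigvee_{\Y}f(T)$ (symmetrically), which exist by $(*_{k-1})$ and have the correct, strictly smaller, minimal sets; the full-$f(T)$ sets of the form $\{1,\dots,k-2\}\cup\{i,j\}$ with $i$ odd and $j$ even are sent to $c$ or $d$ according to whether $i\leq j$ or $j\leq i$, exactly as in Lemma~\ref{Lem:PDLA}. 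Since these sets have cardinality $k\leq n$ they already belong to $(\Pm)_n$, so no auxiliary $\pos{2}\times\pos{2}$ factor is needed here; the extra factor is the wrong tool for case (iii), and your map cannot be made into a $p$-morphism without essentially reverting to the paper's assignment.
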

\begin{proof}
Let $x_1,\ldots,x_k$ be an enumeration of the elements of $\min_{\Y}(a)\cup\min_{\Y}(b)$ such that $x_{k-1}\in\min_{\Y}(a)\setminus\min_{\Y}(b)$ and $x_{k}\in\min_{\Y}(b)\setminus\min_{\Y}(a)$.
Let $f\colon \N\to \{x_1,\ldots,x_n\}$ be defined by 
$$
f(i)=\begin{cases}
x_i& \mbox{if }i\leq k-2\\
x_{k-1}&\mbox{if } i> k-2\mbox{ and } i\mbox{ is odd},\\
x_k&\mbox{if } i> k-2\mbox{ and } i\mbox{ is even}.\\
 \end{cases}
$$

For each $m\in\N$, we define $\u_{m}\colon(\Pm)_n\to \X$ as follows:
$$
\u_m(T)=\begin{cases}
\bigvee_{\Y}(f(T))&\mbox{if }\min_{\Y}(a)\not\subseteq f(T)\mbox{ and }\min_{\Y}(b)\not\subseteq f(T);\\
a\vee_{\Y}\bigvee_{\Y}(f(T))&\mbox{if }\min_{\Y}(a)\subseteq f(T)\neq\{x_1,\ldots,x_k\};\\
b\vee_{\Y}\bigvee_{\Y}(f(T))&\mbox{if }\min_{\Y}(b)\subseteq f(T)\neq\{x_1,\ldots,x_k\};\\
c&\mbox{if }T=\{1,\ldots,k-2\}\cup\{i,j\},\\
&\mbox{ with }k-2<i\leq j\mbox{ and }i\mbox{ is odd and }j\mbox{ is even}; \\
d&\mbox{if }T=\{1,\ldots,k-2\}\cup\{i,j\},\\
&\mbox{ with }k-2<j\leq i\mbox{ and }i\mbox{ is odd and }j\mbox{ is even}; \\
z&
\mbox{otherwise}.
\end{cases}
$$
\end{proof}

The results of Lemmas~\ref{Lem:CaseAI}-\ref{Lem:CaseBIII} are the core of the proof of the following lemma.
\begin{lemma}\label{Lem:Type0PDLn}
Let $\X$ be a poset in $\POn$. Assume there exists $Y\subseteq \X$   satisfying the following conditions
\begin{itemize}
\item[(i)] $Y$ is a maximal element of $\Cn(\X)$; 
\item[(ii)] $\Y$ does not satisfy condition $\Cnstar$.
\end{itemize}
Then $\Type{}(\Un{\POn}(\X))=0$.
\end{lemma}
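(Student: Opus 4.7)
The plan is to parallel the proof of Lemma~\ref{Lem:PDL} from Section~\ref{Sec:UnifPDL}, using the finer family of case-specific results of Lemmas~\ref{Lem:CaseAI}--\ref{Lem:CaseBIII} in place of Lemmas~\ref{Lem:PDLA}--\ref{Lem:PDLC}. First I would let $k\in\N$ be the least integer such that $\Y$ fails $(*_k)$. Such a $k$ exists and satisfies $k\leq n$ because $Y$ is non-empty (so $(*_0)$ holds trivially) while $\Y$ fails $\Cnstar=(*_n)$ by hypothesis~(ii); by minimality of $k$, $\Y$ satisfies $(*_{k-1})$, which is the common item~(i) of all six case lemmas.

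Next I would select witnesses $a,b\in Y$ to the failure of $(*_k)$ with $|\min_{\Y}(a)\cup\min_{\Y}(b)|=k$ and split the argument into two main cases. Case~(A) is when no $c\in Y$ with $c\geq a,b$ satisfies $\min_{\Y}(c)=\min_{\Y}(a)\cup\min_{\Y}(b)$; this is precisely the hypothesis~(iv) of Lemmas~\ref{Lem:CaseAI}--\ref{Lem:CaseAIII}. Case~(B) is the complementary situation, where such an upper bound exists but the least upper bound of $a,b$ in $\Y$ does not; finiteness of $Y$ lets me pick two elements $c,d\in Y$ above $a,b$ with the prescribed minimum set and with no $e\in Y$ strictly between $\{a,b\}$ and $\{c,d\}$, matching hypotheses~(iv)--(vi) of Lemmas~\ref{Lem:CaseBI}--\ref{Lem:CaseBIII}. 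Each main case then further splits into three sub-cases according to whether $\min_{\Y}(a)$ and $\min_{\Y}(b)$ are equal~(I), comparable with one a proper subset of the other~(II), or incomparable~(III). Applying the matching case lemma in each instance yields $\Type{}(\Un{\POn}(\X))=0$.

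The delicate point I foresee is the exhaustive character of the case split and the careful use of the maximality of $Y$ in $\Cn(\X)$ to secure the correct witnesses, in particular in case~(B) when the failure of the join does not obviously produce two upper bounds sharing the prescribed minimum set. Here I would exploit Lemma~\ref{Lem:UpSetNCon}: any candidate element $e\in\X\setminus Y$ comparable to some $y\in Y$ with $\min_{\X}(e)=\min_{\X}(y)$ would force $Y\cup\{e\}\in\Cn(\X)$, contradicting the maximality of $Y$ unless $e\in Y$. Combined with Definition~\ref{Def:n-con}(ii), which guarantees an element of $Y$ realising any prescribed minimum set of size at most $n$, this pins down enough witnesses inside $Y$ to fit one of the six configurations handled by Lemmas~\ref{Lem:CaseAI}--\ref{Lem:CaseBIII}, completing the proof.
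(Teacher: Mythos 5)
Your proposal is correct and follows essentially the same route as the paper: take the minimal $k$ with $\Y$ failing $(*_k)$ (so $(*_{k-1})$ holds and $k\leq n$), pick witnesses $a,b$ with $|\min_{\Y}(a)\cup\min_{\Y}(b)|=k$, split into the ``no upper bound with the prescribed minimum set'' case versus the ``two incomparable-in-the-relevant-sense upper bounds $c,d$ with no intermediate $e$'' case, refine by the containment relation between $\min_{\Y}(a)$ and $\min_{\Y}(b)$, and invoke Lemmas~\ref{Lem:CaseAI}--\ref{Lem:CaseBIII} accordingly. Your added remarks on using maximality of $Y$ in $\Cn(\X)$ together with Lemma~\ref{Lem:UpSetNCon} to keep the witnesses inside $Y$ match how the paper handles the same point.
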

\begin{proof}
Let $k\in\N$ be the minimal natural number such that  $\Y$ does not satisfy condition $(*_k)$. By (ii), $k\leq n$. 
Let $a,b\in Y$ witnessing the failure of $(*_{k})$. 
More precisely, $a,b$ satisfy $|\min_{\Y}(a)\cup\min_{\Y}(b)|=k$;  and one of the following conditions hold
\begin{itemize}
\item[(A)] $\min_{\Y}(z)\neq\min_{\Y}(a)\cup\min_{\Y}(b)$, for each $z\geq a, b$ ; or
\item[(B)] there are $c,d\in\Y\subseteq\X$ such that 
$$\textstyle\min_{\Y}(c)=\min_{\Y}(d)=\min_{\Y}(a)\cup\min_{\Y}(b)$$ but there is no $e\in \Y$ such that $a,b\leq e\leq c,d$.
\end{itemize}

Each of these cases splits in four sub-cases depending on the relation between $\min_{\Y}(a)$ and $\min_{\Y}(b)$
\begin{itemize}
\item[(I)] $\min_{\Y}(a)=\min_{\Y}(b)$;
\item[(II)] $\min_{\Y}(a)\subsetneq\min_{\Y}(b)$
\item[(III)] $\min_{\Y}(b)\subsetneq\min_{\Y}(a)$; 
\item[(IV)] $\min_{\Y}(a)\not\subseteq\min_{\Y}(b)$ and $\min_{\Y}(b)\not\subseteq\min_{\Y}(a)$.
\end{itemize}

For case (A), Lemmas~\ref{Lem:CaseAI} and \ref{Lem:CaseAIII}, prove that $\Type{}(\Un{\POn}\X)=0$ for the subcases (I) and (III), respectively. The same conclusion follows for Cases (A)(II-III) from Lemma~\ref{Lem:CaseAII}. 
If (B) holds then we obtain ${\Type{}(\Un{\POn}\X)=0}$ for the subcases (I), (II-III), and (III), from Lemmas~\ref{Lem:CaseBI}, \ref{Lem:CaseBII}, and \ref{Lem:CaseBIII}, respectively. 
\end{proof}

We are now ready to present the main result of the section.
\begin{theorem}\label{Theo:ClasPDLn}
Let $\X$ be a non-empty poset in $\POn$. Then 
$$
\Type{}(\Un{\POn}\X)=\begin{cases}
|\max(\Cn(\X))|& \mbox{ if each } \Y\in \max(\Cn(\X))\mbox{ satisfies } \Cnstar;\\
0 & \mbox{otherwise}.
\end{cases}
$$
\end{theorem}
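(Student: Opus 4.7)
The plan is to mirror the proof of Theorem~\ref{Theo:ClassPDL} line for line, substituting the $n$-connected machinery (Lemmas~\ref{Lem:RangN-Con} and~\ref{Lem:Type0PDLn}) for the $\Cstar$-based machinery (Lemmas~\ref{Lem:RangCon} and~\ref{Lem:PDL}). Since $\X$ is finite and non-empty, I would first observe that $\Cn(\X)$ is a finite non-empty poset under inclusion (any element of $\min(\X)$, for instance, generates a small $n$-connected set), hence $\max(\Cn(\X))$ is finite and non-empty. Then I would split into two cases according to whether every maximal $n$-connected subset satisfies $\Cnstar$.

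For the easy direction, suppose every $\Y\in\max(\Cn(\X))$ satisfies $\Cnstar$. I would show that the family $\{\mu_{\Y}\mid Y\in\max(\Cn(\X))\}$ of inclusions $\mu_{\Y}\colon\Y\to\X$ is a minimal complete set. Each $\mu_{\Y}$ is a $p$-morphism because condition~(i) in the definition of $n$-connectedness forces $\min_{\Y}(y)=\min_{\X}(y)$ for every $y\in Y$; and $\mu_{\Y}\in\Un{\POn}(\X)$ since $\Y\in\POn$ satisfies $\Cnstar$ by assumption. Given an arbitrary $\u\colon\Z\to\X$ in $\Un{\POn}(\X)$, Lemma~\ref{Lem:RangN-Con} yields that $\u(\Z)$ is $n$-connected, hence contained in some $Y\in\max(\Cn(\X))$; the corestriction of $\u$ to $\Y$ is itself a $p$-morphism (using the same $\min_{\Y}=\min_{\X}$ identity on $Y$) and witnesses $\u\leqn\mu_{\Y}$, so the family is complete. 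Incomparability of the $\mu_{\Y}$'s then follows exactly as in Theorem~\ref{Theo:ClassPDL}: if $\mu_{\Y_1}=\mu_{\Y_2}\circ\psi$ for some $p$-morphism $\psi\colon\Y_1\to\Y_2$, then $Y_1=\mu_{\Y_2}(\psi(Y_1))\subseteq Y_2$ and maximality in $\Cn(\X)$ forces $Y_1=Y_2$. This yields $\Type{}(\Un{\POn}(\X))=|\max(\Cn(\X))|$.

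For the other case, if some $\Y\in\max(\Cn(\X))$ fails $\Cnstar$, Lemma~\ref{Lem:Type0PDLn} delivers $\Type{}(\Un{\POn}(\X))=0$ directly.

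I do not expect a genuine obstacle in the proof of this theorem: the substantial content (the construction of cofinal ascending chains without upper bound in Lemmas~\ref{Lem:CaseAI}--\ref{Lem:CaseBIII}, their assembly into Lemma~\ref{Lem:Type0PDLn}, and the preservation of $n$-connectedness under $p$-morphic images from $\Cnstar$-posets in Lemma~\ref{Lem:RangN-Con}) is already in place. The only small verifications required by the proof itself are that an inclusion $\mu_{\Y}$ of an $n$-connected subset is a $p$-morphism and that the corestriction of a $p$-morphism in $\POn$ to a larger $n$-connected subset of the codomain is again a $p$-morphism; both reduce to the identity $\min_{\Y}(y)=\min_{\X}(y)$ noted above, which in turn is an immediate consequence of condition~(i) in Definition~\ref{Def:n-con}.
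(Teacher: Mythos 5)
Your proposal is correct and follows essentially the same route as the paper: the same minimal complete set $\{\mu_{\Y}\mid Y\in\max(\Cn(\X))\}$ in the first case, justified via Lemma~\ref{Lem:RangN-Con} and the maximality argument for incomparability, and a direct appeal to Lemma~\ref{Lem:Type0PDLn} in the second. The extra verifications you flag (that the inclusions and corestrictions are $p$-morphisms via $\min_{\Y}=\min_{\X}$ on $n$-connected subsets) are exactly the points the paper leaves implicit.
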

\begin{proof}
Since $\X$ is finite and non-empty, also $\max(\Cn(\X))$ is finite and non-empty. 

Assume first that $\Y$ satisfies $\Cnstar$ for each $Y\in \max(\Cn(\X))$. 
The maps $\mu_{\Y}\colon \Y\to \X$ are in $\Un{\POn}(\X)$. 
Now if $\u\colon\Z\to\X\in\Un{\POn}(\X) $, by Lemma~\ref{Lem:RangN-Con}, we have $\u(\Z)\in \Cn(\X)$. 
Therefore, there is $Y\in  \max(\Cn(\X))$ such that $\u(\Z)\subseteq Y$. 
This implies that $\u\leqn \mu_{\Y}$. 
It is easy to see that whenever $Y,Z\in\max(\Cn(\X))$ are different then $\mu_{\Y}\not\leqn \mu_{\Z}$ and $\mu_{\Z}\not\leqn \mu_{\Y}$. Hence, the set $\{\mu_{\Y}\mid Y\in\max(\Cn(\X))\}$ is a minimal complete set in $\Un{\POS}(\X)$, and 
$ \Type{}(\Un{\POn}(\X))=|\max(\Cn(\X))|$.

Now assume there exists $\Y\in \max(\Cn(\X))$ that does not satisfy $\Cnstar$. In this case, Lemma~\ref{Lem:Type0PDLn} proves that $\Type{}(\Un{\POn}(\X))=0$. 
\end{proof}

\subsection*{Acknowledgments}
We would like to thank Hilary Priestley  and Daniele Mundici for their valuable comments and suggestions on a previous draft of this paper. 
We are deeply indebted with both referees for their careful reading  of this paper. Their useful remarks and suggestions have lead to a complete reorganisation of the structure of the paper, which greatly simplified the presentation of our results and their proofs.


\end{document}